\documentclass{amsart}
\usepackage{tikz-cd}
\usepackage{lscape}
\usepackage{enumitem}
\usepackage{placeins}
\usepackage{array}
\usepackage{subcaption}

\usepackage{Definitions}
\usepackage{Environments}
\usepackage{macros}
\usepackage{tikzdecls}

\title{Bicategorical traces and cotraces}
\author{Justin Barhite}
\keywords{bicategory, cotrace, Hochschild homology, Morita equivalence, shadow, string diagram, trace}
\subjclass[2020]{Primary: 16D90, 18D15, 18M05, 18M30, 18N10. Secondary: 13D03, 16D20, 16E40, 20C15}

\begin{document}

\begin{abstract}
The familiar trace of a square matrix generalizes to a trace of an endomorphism of a dualizable object in a symmetric monoidal category. To extend these ideas to other settings, such as modules over non-commutative rings, the trace can be generalized to a bicategory equipped with additional structure called a shadow. We propose a notion of bicategorical cotrace of certain maps involving dualizable objects in a closed bicategory equipped with a coshadow, and we use this framework to draw connections to work of Lipman on residues and traces with Hochschild (co)homology, and to work of Ganter and Kapranov on 2-representations and 2-characters.
\end{abstract}

\maketitle

\section{Introduction}

A recurring theme in mathematics is that traces turn a complicated object into a simpler one, discarding much of the information in the original object but retaining just enough information to say something useful about it. In particular, many familiar invariants arise as traces of identity maps. The trace of the identity map on a real or complex vector space is the vector space's dimension, which classifies the vector space up to isomorphism. The trace of the identity map on a finite CW complex is its Euler characteristic (vertices plus edges minus faces, and so on in higher dimensions), which is not a complete invariant but is still a useful tool for distinguishing between topological spaces. Finally, the trace of the identity map on a group representation is its character, which forgets most of the data of the representation and retains only the trace of each group element's action on the underlying vector space; over a field of characteristic zero, the character determines the representation up to isomorphism.

The first two of these examples are generalized by a well-known notion of trace of an endomorphism of a dualizable object in a symmetric monoidal category \cite{DoldPuppe1978, PS_sym_mon_traces} (\emph{dualizability} generalizes the finiteness conditions in these examples). The character of a group representation, however, requires the trace in a bicategory equipped with a shadow, which was introduced by Ponto \cite{Ponto_thesis, Shadows_and_traces, Ponto2015equivariant, Ponto2011relative} to generalize fixed-point invariants such as the Reidemeister trace. The bicategorical trace also subsumes the Hattori-Stallings trace \cite{Hattori1965, Stallings1965}, which extends the familiar linear algebra trace to endomorphisms of modules over noncommutative rings, and of which the group character is an example. The price we pay for including noncommutative rings is that the trace of an $R$-module endomorphism takes values not in $R$ itself but only in the quotient $\overline{R}$ of $R$ by the subgroup generated by all elements of the form $rs - sr$. The passage from $R$ to $\overline{R}$ is an example of a \emph{shadow}, which is the additional structure a bicategory requires in order to support a notion of trace.

There are, however, some examples of trace-like constructions which are not fully explained by categorical notions of trace; incorporating them into this perspective requires a dual notion of \emph{cotrace}. The example that originally motivated our development of a bicategorical cotrace is the cotrace maps of \cite{Lipman1987}. Lipman provides a more elementary development of Grothendieck's residue symbol \cite{Hartshorne1966} by reframing it in terms of Hochschild homology, and he establishes a sort of adjointness between trace and cotrace, mediated by a pairing map on Hochschild homology and cohomology. The simplest case of this adjointness is expressed by the following commutative diagram, where $R$ is any ring and $F$ is a right $R$-module which is finitely generated and projective (this is the appropriate sort of ``finiteness'' for modules):

\begin{equation}
\label{eq:interplay_basic_example}
\begin{tikzpicture}[xscale=3.25, yscale=1.5]
	\node (A) at (1, 3) {$R^c \otimes \overline{\Hom_R(F, F)}$};
	\node (B) at (0, 2) {$\Hom_R(F, F)^c \otimes \overline{\Hom_R(F, F)}$};
	\node (C) at (0, 1) {$\overline{\Hom_R(F, F)}$};
	\node (D) at (1, 0) {$\overline{R}$};
	\node (E) at (2, 2) {$R^c \otimes \overline{R}$};
	\node (F) at (2, 1) {$\overline{R}$};

	\draw [->] (A) -- node[above left]{$\cotr \otimes 1$} (B);
	\draw [->] (A) -- node[above right]{$1 \otimes \tr$} (E);
	\draw [->] (B) -- node[left]{$\rho_{\Hom_R(F, F)}$} (C);
	\draw [->] (E) -- node[right]{$\rho_R$} (F);
	\draw [->] (C) -- node[below left]{$\tr$} (D);
	\draw [double equal sign distance] (F) -- (D);
\end{tikzpicture}
\end{equation}

Just as the Hattori-Stallings trace necessitates the passage from $R$ to its quotient $\overline{R}$, a cotrace compels us, dually, to replace $R$ with its center $R^c$. The trace map
\[
	\overline{\Hom_R(F, F)} \to \overline{R}\]
is the Hattori-Stallings trace; its domain is $\overline{\Hom_R(F, F)}$, rather than $\Hom_R(F, F)$, because of cyclicity of the trace, i.e.~the fact that $\tr(fg) = \tr(gf)$. The cotrace
\[
	R^c \to \Hom_R(F, F)^c
\]
sends $r$ to multiplication by $r$, i.e.~the map $\mu_r : F \to F, \, x \mapsto xr$. The pairing map $\rho_R$ is multiplication in $R$, and $\rho_{\Hom_R(F, F)}$ is composition. Thus the diagram asserts that $\tr(\mu_r \circ f) = r \tr(f)$, but writing this as
\[
	\tr(\rho(\cotr(r), f)) = \rho(r, \tr(f))
\]
makes it more strongly resemble an adjointness between $\cotr$ and $\tr$.

Lipman acknowledges that his description of residues is not fully satisfactory, and he suspects that ``there might well be a more fundamental approach to the subject, encompassing a great deal more than we have dealt with here'' \cite{Lipman1987}. In Sections~\ref{sec:coshadows_and_cotraces} and \ref{sec:interplay} we offer a candidate for the more fundamental approach Lipman imagined, by repackaging his traces and cotraces in terms of Ponto's bicategorical trace \cite{Ponto_thesis, Shadows_and_traces} and a new notion of bicategorical cotrace. In doing so, we have teased out the formal structure underlying Lipman's trace formulas, which includes (1) ``coshadows'' and ``cotraces'' dual to Ponto's shadows and traces and (2) an interplay between bicategorical traces and cotraces generalizing the results of \cite{Lipman1987}.

Before we describe this interplay, some comments are in order about the shape that traces and cotraces take. The trace of a linear transformation $f : V \to V$ is often thought of as a \emph{number} (i.e.~an element of the ground field), but we prefer to think of it as a \emph{map}, namely the linear transformation $k \to k$ sending $1$ to the number which is ordinarily thought of as the trace of $f$. While this distinction may seem inconsequential for the example of vector spaces, it turns out to be a crucial shift in perspective that brings many different mathematical constructs underneath the umbrella of ``trace.'' For example, the trace of an endomorphism of a $k$-linear representation $V$ of a group $G$ is a map $\overline{k[G]} \to k$ (note that $\overline{k} = k$ since $k$ is a field, and in particular is commutative). Such a linear transformation amounts to a $k$-valued class function on $G$, and in fact the trace of the identity map on $V$ is nothing other than the character of $V$ (Example~\ref{ex:character}). Thus by embracing this shift in perspective and viewing traces as maps, we can consolidate the entire data of a group character into a single trace, rather than a collection of many traces (one for each conjugacy class).

The reason that the trace of an endomorphism of $V$ has the form $\overline{k[G]} \to \overline{k}$ is that an endomorphism of an $(R, S)$-bimodule has a trace $\overline{R} \to \overline{S}$, and a $G$-representation can be viewed as a $(k[G], k)$-bimodule, where $k[G]$ is the group algebra. More generally, the operation $R \mapsto \overline{R}$ can be replaced by a \emph{shadow} $\sh{-}$, which abstracts the properties of this quotienting operation that facilitate a sensible notion of trace; the trace of an $(R, S)$-bimodule (or a 1-cell $R \to S$ in a bicategory) is then a map $\sh{R} \to \sh{S}$. Dually, the replacement of a ring by its center is generalized by a coshadow $\lsh{-}$, which supports the formation of cotraces $\lsh{R} \to \lsh{S}$. With this notation in place, we are finally prepared to state the bicategorical generalization of the cotrace-trace adjointness illustrated by (\ref{eq:interplay_basic_example}), albeit somewhat imprecisely (see Theorem~\ref{thm:interplay} for the precise statement).

\begin{thm}
\label{thm:main_result_intro}
Given suitable maps $f, g, h, i$ involving objects $A, A', B, B', C$, two shadows $\bluesh{-}$ and $\sh{-}$, a coshadow $\bluelsh{-}$, and a pairing map $\bluelsh{-} \otimes \bluesh{-} \xrightarrow{\rho} \sh{- \otimes -}$, the following commutes:
\[\begin{tikzcd}
	& \bluelsh{A} \otimes \bluesh{B'}
		\arrow[dl, "\cotr(f) \otimes 1"']
		\arrow[dr, "1 \otimes \tr(g)"]
	\\
	\bluelsh{A'} \otimes \bluesh{B'}
		\arrow[d, "\rho"']
	&& \bluelsh{A} \otimes \bluesh{B}
		\arrow[d, "\rho"]
	\\
	\sh{A' \otimes B'}
		\arrow[dr, "\tr(h)"']
	&& \sh{A \otimes B}
		\arrow[dl, "\tr(i)"]
	\\
	& \sh{C}
\end{tikzcd}\]
\end{thm}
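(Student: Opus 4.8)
The plan is to reduce the diagram to an identity in the graphical calculus for a closed bicategory equipped with a shadow and a coshadow, and to verify that identity by a sequence of planar isotopies and applications of the structure axioms. First I would unwind the three kinds of map appearing in the square: $\tr(g)$ and $\tr(i)$ as the usual composites built from the coevaluation and evaluation 2-cells of a right dualizable 1-cell, a shadow structure isomorphism, and a whiskering by the 2-cell being traced; $\cotr(f)$ as the formally dual composite, in which the two duality 2-cells trade places and the coshadow structure isomorphism replaces the shadow one; and $\rho$ together with whatever compatibility it is required to satisfy with the shadow and coshadow isomorphisms. Drawing the two composites around the square as string diagrams, the claim becomes that they are isotopic after a single rewrite at the spot where $\rho$ merges the coshadow region into the shadow region. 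A convenient way to organize this is to factor the square through the intermediate object $\sh{A \otimes B'}$ (the target of $\rho$ applied directly to the apex), so that the theorem splits into: (1) $\rho \circ (\cotr(f) \otimes 1)$ equals a shadow-level map induced by $f$, postcomposed with $\rho$; (2) the analogous statement for $1 \otimes \tr(g)$; and (3) the resulting equality of two traces of 2-cells living over $\sh{A \otimes B'}$.

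For step (3) I would appeal to functoriality of the bicategorical trace under horizontal and vertical composition, together with the hypothesis in Theorem~\ref{thm:interplay} that the ``suitable'' 2-cells $f, g, h, i$ fit together along a common dualizable 1-cell; this is the clause that forces $\tr(h)$ and $\tr(i)$ to agree once the maps induced by $f$ and $g$ have been inserted. Steps (1) and (2) carry the real content: step (2) is essentially a naturality statement for $\rho$ with respect to the shadow structure, since $\tr(g)$ is already assembled out of the shadow isomorphisms; step (1), by contrast, requires turning the cap/cup feature that $\cotr(f)$ contributes on the $A$-strand — formed using the \emph{coshadow} — into one formed using the \emph{shadow}, which is legitimate precisely because $\rho$ has been slid to the other side of it and $\rho$ is assumed compatible with both. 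As a sanity check, specializing to finitely generated projective modules over a ring, with $\cotr(r) = \mu_r$, this whole argument collapses to the one-line computation $\tr(\mu_r \circ \varphi) = \rho_R(r, \tr(\varphi))$ for a right $R$-linear endomorphism $\varphi$, with the dual basis playing the role of the coevaluation, right $R$-linearity of the dual-basis functionals playing the role of the $\rho$/coshadow compatibility, and cyclicity in $\overline{R}$ playing the role of the shadow isomorphism — exactly the content of diagram~(\ref{eq:interplay_basic_example}).

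I expect step (1) to be the main obstacle, and more precisely the bookkeeping inside it: checking that the handedness of the duality used for $\cotr$ matches that used for $\tr$, that the directions of the shadow and coshadow structure isomorphisms are compatible with the way $\rho$ is required to intertwine them, and that all orientations agree so that the coshadow cap really does become an admissible move on the shadow side. If the compatibility axioms imposed on $\rho$ are the right ones, this is a purely formal string-diagram manipulation; the substance of the theorem is that those axioms suffice, and any mismatch will surface exactly here rather than in steps (2) or (3).
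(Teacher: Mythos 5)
Your strategy breaks down at its first move: the intermediate object $\sh{A \otimes B'}$ does not exist in the generality of the theorem, so there is no ``$\rho$ applied directly to the apex.'' In the precise statement (Theorem~\ref{thm:interplay}) the apex is $\bluelsh{M} \otimes \bluesh{Q}$, and $M$ and $Q$ are endo-1-cells over \emph{different} 0-cells: if $F \in \mathscr{B}(X, Y)$, then $\xi : Q \odot F \to F \odot P$ forces $Q \in \mathscr{B}(X, X)$, while $\gamma : F \triangleright M \to N \triangleleft F$ forces $M \in \mathscr{B}(Y, Y)$. Hence $M \odot Q$ is not even composable and $\rho_{M, Q}$ is undefined; only the two lower pairings $\bluelsh{N} \otimes \bluesh{Q} \to \sh{N \odot Q}$ and $\bluelsh{M} \otimes \bluesh{P} \to \sh{M \odot P}$ make sense. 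This is not an artifact of abstraction: already in the motivating example (\ref{eq:prop_4_5_4}) the apex is $\HH^0(R, M) \otimes \HH_0(S, S)$ with $S = \Hom_R(F, F) \neq R$, which is exactly why that diagram is a kite with no arrow dropping straight down from the apex. Consequently your steps (1)--(3) — pushing $\cotr(f)$ and $\tr(g)$ across a pairing taken at the apex and then comparing two traces over $\sh{A \otimes B'}$ — are unavailable, and your sanity check only appears to work because it implicitly collapses all 0-cells to a single ring.

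Two further mismatches would remain even where the typing could be forced. First, the pairing axiom (Definition~\ref{defn:pairing}) is a single joint condition involving the cyclicity isomorphisms of the coshadow \emph{and} the shadow simultaneously, together with an evaluation 2-cell; it does not furnish a standalone ``naturality of $\rho$ with respect to the shadow structure'' nor a standalone coshadow-cap rewrite, so the division of labor you assign to steps (1) and (2) does not match the available hypotheses. Second, the theorem requires an explicit compatibility among the four 2-cells — the commuting square in Theorem~\ref{thm:interplay} relating $\zeta$ to $\gamma$, $\xi$, and $\delta$ via evaluation maps — and your step (3) only gestures at ``fitting together along a common dualizable 1-cell'' without saying where this enters or how $\tr_{F \odot G}(\zeta)$ is compared with $\tr_G(\delta)$. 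The paper does not modularize the kite at all: it verifies directly that the two outer composites agree by one large commuting diagram that interleaves the coevaluation/evaluation data of $F$ and $G$ with $\rho$, invoking the pairing axiom and the hypothesis square at specific cells and naturality/functoriality everywhere else. A salvageable modular argument would have to route through objects that actually exist (shadows of composites twisted by $F$ and $G$), not through $\sh{A \otimes B'}$.
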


This abstract perspective on traces is valuable because it often allows us to make mathematical constructs and theorems more accessible by extracting the core ideas from the technical details of their original presentation. Moreover, we are often able to prove vastly more general versions of these results (for example, Theorem~\ref{thm:main_result_intro}) and port them over to other mathematical contexts. For example, the tools that we have built to understand Lipman's work also have applications to the theory of group representations and 2-representations.


The aim of this paper is to lay the foundation for a theory of bicategorical coshadows and cotraces. The main result is Theorem~\ref{thm:main_result_intro} (stated precisely in Theorem~\ref{thm:interplay}), but along the way we establish properties of coshadows and cotraces analogous to many of the properties of trace described in \cite{Shadows_and_traces}, including cyclicity, functoriality, and Morita invariance. The overarching goal is to illuminate connections between diverse mathematical contexts, so we emphasize the application of this categorical machinery to examples such as the trace-cotrace interplay of \cite{Lipman1987} and the categorical trace of \cite{GK2006}.

We begin in Section~\ref{sec:duality_and_trace} by reviewing the theory of duality and trace in symmetric monoidal categories and in bicategories, along with the graphical calculus of string diagrams, and then in Section~\ref{sec:closed_bicategories} we specialize to closed bicategories, which will be the setting for our study of cotraces. In Sections~\ref{sec:coshadows_and_cotraces} and \ref{sec:cotrace_basic_properties} we begin to develop the theory of bicategorical cotraces, and we demonstrate that while cotraces can also be defined in symmetric monoidal categories, they coincide with symmetric monoidal traces. The main result of the paper, Theorem~\ref{thm:main_result_intro}, is proven in Section~\ref{sec:interplay}, showing that traces and cotraces are compatible with the additional structure of a pairing map from a shadow and coshadow to a second shadow. Finally, in Sections~\ref{sec:functoriality} and \ref{sec:Morita_invariance} we establish functoriality of cotrace and Morita invariance of coshadows.

\subsection*{Acknowledgements}

The author is grateful to Jonathan Campbell for bringing to our attention the paper \cite{Lipman1987} that motivated this work, to Cary Malkiewich for suggesting a better system of string diagrams and thereby helping to formulate cyclicity for cotraces, to Millie Rose Deaton, Niles Johnson, Inbar Klang, and Jordan Sawdy for helpful conversations, to his advisor Kate Ponto for all her support and encouragement, and to the referee for careful reading and helpful suggestions. This material is based upon work supported by the National Science Foundation under Award Nos.~DMS-2052905 and DMS-1810779.

\section{Duality and trace}
\label{sec:duality_and_trace}

We begin by reviewing the theory of duality and trace in symmetric monoidal categories and bicategories, which is developed in \cite{DoldPuppe1978, LMS, MaySigurdsson, Ponto_thesis, Shadows_and_traces}. The central idea is that the property of dualizability allows the extraction of interesting invariants via traces.

\subsection{Symmetric monoidal duality and trace}

If $V$ is a finite-dimensional vector space, then any linear transformation $f : V \to V$ has a trace, which is the sum of the diagonal entries in any matrix representation of $f$. Traces exist in much more general contexts though; instead of endomorphisms vector spaces, we can take the trace of an endomorphism of an object in a symmetric monoidal category, as long as that object satisfies a ``dualizability'' condition generalizing finite-dimensionality for vector spaces.

\begin{defn}[{\cite[Theorem 1.3]{DoldPuppe1978}}]
\label{defn:dual_pair_sm}
Let $(\mathscr{C}, \otimes, I)$ be a symmetric monoidal category. An object $M$ of $\mathscr{C}$ is \KEYWORD{dualizable} if there is an object $M^*$ of $\mathscr{C}$ and maps
\[
	\eta : I \to M \otimes M^* \qquad\text{and}\qquad \varepsilon : M^* \otimes M \to I
\]
such that the following \KEYWORD{triangle identities} hold:
\[\begin{tikzcd}
	M
		\arrow[r, "\eta \otimes \id"]
		\arrow[dr, "\id"']
	& M \otimes M^* \otimes M
		\arrow[d, "\id \otimes \varepsilon"]
	\\
	& M
\end{tikzcd} \qquad\qquad
\begin{tikzcd}
	M^*
		\arrow[r, "\id \otimes \eta"]
		\arrow[dr, "\id"']
	& M^* \otimes M \otimes M^* \arrow[d, "\varepsilon \otimes \id"]
	\\
	& M^*
\end{tikzcd}\]
We say that $M^*$ is a \KEYWORD{dual} for $M$, and we refer to $\eta$ and $\varepsilon$ as the \KEYWORD{coevaluation} and \KEYWORD{evaluation} (respectively) of the dual pair (they are sometimes called the \emph{unit} and \emph{counit}, but we avoid that terminology so as not to overload the word ``unit'').
\end{defn}

In the previous definition and often elsewhere, we suppress unit isomorphisms; for instance, we write $M \xrightarrow{\eta \otimes \id} M \otimes M^* \otimes M$ as shorthand for the composite $M \cong I \otimes M \xrightarrow{\eta \otimes \id} M \otimes M^* \otimes M$.

\begin{rmk}
\label{rmk:dual_pair_other_way}
If $(M, M^*)$ is a dual pair with coevaluation and evaluation $\eta$ and $\varepsilon$, then $(M^*, M)$ is also a dual pair, with coevaluation and evaluation
\[
	I \xrightarrow{\eta} M \otimes M^* \xrightarrow{\cong} M^* \otimes M \qquad\text{and}\qquad M \otimes M^* \xrightarrow{\cong} M^* \otimes M \xrightarrow{\varepsilon} I.
\]
\end{rmk}

\begin{example}
\label{ex:vector_spaces}
A vector space $V$ over a field $k$ is dualizable if and only if it is finite-dimensional. If $V$ is finite-dimensional, then $V^* := \Hom_k(V, k)$ is a dual for $V$; if $e_1, \ldots, e_n$ is a basis for $V$ and $e_1^*, \ldots, e_n^*$ is the corresponding dual basis for $V^*$ (so that $e_i^*(e_j) = \delta_{ij}$), then the maps
\[\begin{tikzcd}[row sep=0.5]
	k
		\arrow[r, "\eta"]
	& V \otimes V^*
	\\
	1
		\arrow[r, |->]
	& \sum_{i=1}^n e_i \otimes e_i^*
\end{tikzcd} \qquad\qquad
\begin{tikzcd}[row sep=0.5]
	V^* \otimes V
		\arrow[r, "\varepsilon"]
	& k
	\\
	\phi \otimes v
		\arrow[r, |->]
	& \phi(v)
\end{tikzcd}\]
exhibit $(V, V^*)$ as a dual pair. Conversely, if $V$ is a dualizable vector space, then $\eta(1) =: \sum_{i=1}^n v_i \otimes \phi_i$ is some finite sum of simple tensors, and one of the triangle identities implies that $v_1, \ldots, v_n$ generate $V$, so $V$ is finite-dimensional.
\end{example}

Dualizability allows us to extract useful information about an object; for example, if a vector space is dualizable (i.e.~finite-dimensional), we can use the structure of a dual pair to determine its dimension:

\begin{example}
\label{ex:vector_space_Euler_characteristic}
If $V$ is a finite-dimensional vector space over $k$ and $V^*$ is its dual, with $\eta$ and $\varepsilon$ as in Example~\ref{ex:vector_spaces}, then the composite
\begin{equation}
\label{eq:vector_space_Euler_characteristic}
	k \xrightarrow{\eta} V \otimes V^* \xrightarrow{\cong} V^* \otimes V \xrightarrow{\varepsilon} k
\end{equation}
is the element of $\Hom_k(k, k)$ which is multiplication by $\dim V$.
\end{example}

While there may be many choices of $V^*$, $\eta$, and $\varepsilon$ satisfying Definition~\ref{defn:dual_pair_sm}, the map (\ref{eq:vector_space_Euler_characteristic}) is independent of such choices, since it is determined solely by the dimension of $V$. We could have instead defined $\varepsilon$ as a map $V \otimes V^* \to k$, making the symmetry isomorphism in (\ref{eq:vector_space_Euler_characteristic}) unnecessary; however, we cannot avoid the need for symmetry isomorphisms entirely, as they would then appear in the triangle identities instead. Besides, the way we have formulated the definitions generalizes more naturally to the bicategorical setting.

A similar story plays out in topological settings as well, though we need to pass to a category of spectra since the category of spaces has no nontrivial dualizable objects. Having done so, we produce an endomorphism of the monoidal unit analogous to (\ref{eq:vector_space_Euler_characteristic}) and find that it recovers a familiar topological invariant:

\begin{example}[\cite{DoldPuppe1978}]
\label{ex:CW_Euler_characteristic}
If $X$ is a compact CW complex, then $\Sigma^{\infty}_+ X$ is dualizable in the stable homotopy category $\mathrm{Ho}(\mathbf{Sp})$, with dual $DX$ (the Spanier-Whitehead dual), and the map
\begin{equation}
\label{eq:CW_Euler_characteristic}
	S \xrightarrow{\eta} \Sigma^{\infty}_+ X \wedge DX \cong DX \wedge \Sigma^{\infty}_+ X \xrightarrow{\varepsilon} S
\end{equation}
is the element of $\Hom_{\mathrm{Ho}(\mathbf{Sp})}(S, S) = \pi_0(S) \cong \mathbb{Z}$ which is multiplication by the Euler characteristic of $X$.
\end{example}

In either of these examples, we can insert an endomorphism $f$ of the dualizable object in order to obtain information about $f$.

\begin{example}
\label{ex:vector_space_trace}
Let $f : V \to V$ be an endomorphism of a finite-dimensional vector space $V$ over $k$. Then the composite
\[
	k \xrightarrow{\eta} V \otimes V^* \xrightarrow{f \otimes 1} V \otimes V^* \xrightarrow{\cong} V^* \otimes V \xrightarrow{\varepsilon} k
\]
is multiplication by the trace of $f$.
\end{example}

\begin{example}
\label{ex:Lefschetz_number}
If $X$ is a compact CW complex and $f : X \to X$, then
\[
	S \xrightarrow{\eta} \Sigma^{\infty}_+ X \wedge DX \xrightarrow{\Sigma^{\infty}_+ f \wedge 1} \Sigma^{\infty}_+ X \wedge DX \cong DX \wedge \Sigma^{\infty}_+ X \xrightarrow{\varepsilon} S
\]
corresponds to multiplication by the Lefschetz number of $f$ \cite{DoldPuppe1978}.
\end{example}

These are examples of the following general notion of trace in a symmetric monoidal category:

\begin{defn}[\cite{DoldPuppe1978}]
\label{defn:symmetric_monoidal_trace}
Let $M$ be a dualizable object in a symmetric monoidal category, with dual $M^*$. The \KEYWORD{trace} of $f : M \to M$ is
\[
	I \xrightarrow{\eta} M \otimes M^* \xrightarrow{f \otimes 1} M \otimes M^* \xrightarrow{\cong} M^* \otimes M \xrightarrow{\varepsilon} I.
\]
\end{defn}
If $f$ is an identity map, we call its trace the \KEYWORD{Euler characteristic} of $M$, by analogy with Example~\ref{ex:CW_Euler_characteristic}. The trace of $f$ is independent of the choices of $M^*$, $\eta$, and $\varepsilon$.

\subsection{Bicategorical duality and trace}

The machinery of symmetric monoidal traces does not apply to modules over noncommutative rings, since $\MOD_R$ does not have a monoidal tensor product if $R$ is not commutative. There is, however, a sensible notion of trace for modules over noncommutative rings, the Hattori-Stallings trace \cite{Hattori1965, Stallings1965}, but it takes values in a quotient of the ring rather than the ring itself. The appropriate category-theoretic setting for describing this trace is a bicategory.

A bicategory $\mathscr{B}$ has a collection of objects (also called \emph{0-cells}), and between any two objects it has not merely a \emph{set} of morphisms but rather a \emph{category}, whose objects and morphisms are called \emph{1-cells} and \emph{2-cells} of $\mathscr{B}$, respectively; for a complete definition of a bicategory see \cite{Leinster_bicategories}. We denote composition of 1-cells in a bicategory $\mathscr{B}$ by $\odot$, and we write composition in diagrammatic order. If $R$ is a 0-cell in $\mathscr{B}$, we denote the identity 1-cell for $R$ by $U_R$. Sometimes we write $M : R \pto S$ to indicate that $M$ is a 1-cell from $R$ to $S$, i.e.~$M \in \mathscr{B}(R, S)$. The most useful bicategory to keep in mind is $\MORITABICAT$, the Morita bicategory of rings, bimodules, and bimodule homomorphisms. In this bicategory, $U_R$ is $R$ with the standard $(R, R)$-bimodule structure, and $\odot$ is the tensor product. We will sometimes also work in $\MORITABICATK$, the bicategory of algebras over a commutative ring $k$, bimodules, and bimodule homomorphisms.

The following definition first appeared as \cite[Definition 16.4.1]{MaySigurdsson}.

\begin{defn}
\label{defn:dual_pair_bicat}
Let $M$ be a 1-cell in a bicategory $\mathscr{B}(R, S)$. We say $M$ is \KEYWORD{right dualizable} if there is a 1-cell $M^*$ together with 2-cells
\[
	\eta : U_R \to M \odot M^* \qquad\text{and}\qquad \varepsilon : M^* \odot M \to U_S
\]
such that the triangle identities hold. We say that $M^*$ is \KEYWORD{right dual} to $M$, that $(M, M^*)$ is a \KEYWORD{dual pair}, that $M^*$ is \KEYWORD{left dualizable}, and that $M$ is its \KEYWORD{left dual}.
\end{defn}

Unlike dual pairs in symmetric monoidal categories, bicategorical dual pairs have a sidedness: right dualizability is not the same as left dualizability (cf.~Remark~\ref{rmk:dual_pair_other_way}).

\begin{rmk}
\label{rmk:uniqueness_of_duals}
As in the symmetric monoidal setting, duals are unique up to isomorphism. For example, suppose that $(M, N)$ is a dual pair with coevaluation and evaluation $\eta$ and $\varepsilon$ and that $(M, N')$ is a dual pair with coevaluation and evaluation $\eta'$ and $\varepsilon'$. Then there are maps
\[N \xrightarrow{\id \odot \eta'} N \odot M \odot N' \xrightarrow{\varepsilon \odot \id} N' \qquad\text{and}\qquad N' \xrightarrow{\id \odot \eta} N' \odot M \odot N \xrightarrow{\varepsilon' \odot \id} N\]
which are mutually inverse by the triangle identities for both dual pairs.
\end{rmk}

\begin{example}
\label{ex:dualizable_bimodule}
An $(R, S)$-bimodule is right dualizable in $\MORITABICAT$ if and only if it is finitely generated and projective as a right $S$-module. The argument is similar to Example~\ref{ex:vector_spaces} and uses the fact that a right $S$-module $M$ is finitely generated and projective if and only if there are $m_1, \ldots, m_n \in M$ and $m_1^*, \ldots, m_n^* \in \Hom_S(M, S)$ such that
\[
	\sum_{i=1}^n m_i m_i^*(m) = m
\]
for all $m \in M$.
\end{example}

\begin{example}
In the bicategory of categories, functors, and natural transformations, the dual pairs are the adjoint pairs of functors, but in an unfortunate clash of nomenclature, \emph{left} adjoints are \emph{right} duals as a consequence of the order in which we have chosen to write composition.
\end{example}

\begin{prop}[{\cite[Theorem 16.5.1]{MaySigurdsson}}]
\label{prop:composite_of_dual_pairs}
If $(M, M^*)$ and $(N, N^*)$ are dual pairs, then so is $(M \odot N, N^* \odot M^*)$.
\end{prop}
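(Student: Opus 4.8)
The plan is to build the required coevaluation and evaluation for $(M \odot N, N^* \odot M^*)$ out of those for the two given dual pairs, and then verify the triangle identities by reducing to the triangle identities for $(M, M^*)$ and $(N, N^*)$ one bend at a time. Write $\eta_M, \varepsilon_M$ and $\eta_N, \varepsilon_N$ for the structure maps of the two dual pairs, with $M : R \pto S$ and $N : S \pto T$ (so $M^* : S \pto R$ and $N^* : T \pto S$). Define the coevaluation as
\[
    U_R \xrightarrow{\eta_M} M \odot M^* \xrightarrow{\id_M \odot \eta_N \odot \id_{M^*}} M \odot N \odot N^* \odot M^*
\]
and the evaluation as
\[
    N^* \odot M^* \odot M \odot N \xrightarrow{\id_{N^*} \odot \varepsilon_M \odot \id_N} N^* \odot N \xrightarrow{\varepsilon_N} U_T,
\]
suppressing associators and unit isomorphisms as elsewhere in the paper.

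Next I would check the first triangle identity, namely that
\[
    M \odot N \xrightarrow{\eta \odot \id} M \odot N \odot N^* \odot M^* \odot M \odot N \xrightarrow{\id \odot \varepsilon} M \odot N
\]
is the identity. The idea is to remove the two bends separately. After expanding $\eta$ and $\varepsilon$ into their definitions and using functoriality of $\odot$ (the interchange law) to slide the whiskered pieces past one another, the sub-composite involving only $\eta_M$ and $\varepsilon_M$ (whiskered on the appropriate sides by identities of $N$) collapses by the first triangle identity for $(M, M^*)$; what remains is $\id_M$ whiskered into $N \xrightarrow{\eta_N \odot \id} N \odot N^* \odot N \xrightarrow{\id \odot \varepsilon_N} N$, which is $\id_N$ by the first triangle identity for $(N, N^*)$. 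The second triangle identity is entirely dual: one cancels first using the second triangle identity for $(N, N^*)$ and then the second for $(M, M^*)$.

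Since the paper has string diagrams available, I would present this pictorially: the coevaluation is a cap of $M, M^*$ with a nested cap of $N, N^*$, the evaluation is its mirror image, and each triangle identity becomes two successive zig-zag straightenings, each licensed by one of the constituent triangle identities. In that language the cancellations are visually immediate and there is essentially no computation.

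I expect the only real subtlety to be bookkeeping: making sure the whiskerings are on the correct sides and that the interchange law is legitimately applied (i.e.\ that the two bend-removals genuinely commute past one another), together with the usual coherence juggling of associators and unitors that the paper's conventions allow us to suppress. None of this is conceptually difficult — it is the standard fact that duals compose, and its content is exactly that the two zig-zags can be unbent independently.
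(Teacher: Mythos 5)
Your proposal is correct and matches the paper's proof: the paper defines exactly the same coevaluation $\eta$ followed by $1 \odot \eta' \odot 1$ and evaluation $1 \odot \varepsilon \odot 1$ followed by $\varepsilon'$, and simply asserts the triangle identities, which you verify by the standard one-bend-at-a-time cancellation. No differences worth noting.
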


\begin{proof}
Let $M \in \mathscr{B}(R, S)$ and $N \in \mathscr{B}(S, T)$. If the dual pairs $(M, M^*)$ and $(N, N^*)$ have coevaluations $\eta$ and $\eta'$ and evaluations $\varepsilon$ and $\varepsilon'$, respectively, then
\[
U_R \xrightarrow{\eta} M \odot M^* \xrightarrow{1 \odot \eta' \odot 1} M \odot N \odot N^* \odot M^*
\]
\[
N^* \odot M^* \odot M \odot N \xrightarrow{1 \odot \varepsilon \odot 1} N^* \odot N \xrightarrow{\varepsilon'} U_T
\]
are a coevaluation and an evaluation for $(M \odot N, N^* \odot M^*)$.
\end{proof}

If we attempt to write down a composite like the symmetric monoidal trace of Definition~\ref{defn:symmetric_monoidal_trace}, we quickly encounter a problem: there is no symmetry isomorphism to get us from $M \odot M^*$ to $M^* \odot M$, and in fact these are not even objects in the same category. One possible remedy is to apply a functor to push $M \odot M^*$ and $M^* \odot M$ into a third category, where we can ask for an isomorphism between their images. This is the notion of a \emph{bicategorical shadow}, which was introduced in \cite{Ponto_thesis}:

\begin{defn}
\label{defn:shadow}
A \KEYWORD{shadow} for a bicategory $\mathscr{B}$ is a category $\mathbf{T}$ and functors
\[
	\sh{-} : \mathscr{B}(R, R) \to \mathbf{T}
\]
for each 0-cell $R$ of $\mathscr{B}$, equipped with natural isomorphisms
\[
	\theta_{M, N} : \sh{M \odot N} \xrightarrow{\cong} \sh{N \odot M}
\]
for each $M \in \mathscr{B}(R, S)$ and $N \in \mathscr{B}(S, R)$, such that the following diagrams commute whenever they make sense:
\[
\begin{tikzcd}
	\sh{(M \odot N) \odot P}
		\arrow[r, "\theta", "\cong"']
		\arrow[d, "\sh{a}"', "\cong"]
	& \sh{P \odot (M \odot N)}
		\arrow[r, "\sh{a^{-1}}", "\cong"']
	& \sh{(P \odot M) \odot N}
		\arrow[d, "\theta", "\cong"']
	\\
	\sh{M \odot (N \odot P)}
		\arrow[r, "\theta"', "\cong"]
	& \sh{(N \odot P) \odot M}
		\arrow[r, "\sh{a}"', "\cong"]
	& \sh{N \odot (P \odot M)}
\end{tikzcd}
\]
\[
\begin{tikzcd}
	\sh{M \odot U_R}
		\arrow[r, "\theta", "\cong"']
		\arrow[dr, "\sh{r}"', "\cong"]
	& \sh{U_R \odot M}
		\arrow[r, "\theta", "\cong"']
		\arrow[d, "\sh{l}"', "\cong"]
	& \sh{M \odot U_R}
		\arrow[dl, "\sh{r}", "\cong"']
	\\
	& \sh{M}
\end{tikzcd}
\]
\end{defn}

Here and elsewhere, the phrase ``whenever they make sense'' means that the 1-cells have source and target 0-cells which make all tensor products and shadows (and, later, hom-objects and coshadows) valid. For example, the hexagon in Definition~\ref{defn:shadow} ``makes sense'' if the targets of $M$, $N$, and $P$ equal the sources of $N$, $P$, and $M$, respectively. The isomorphisms $\theta_{M, N}$ and $\theta_{N, M}$ are mutually inverse \cite[Proposition 4.3]{Shadows_and_traces}, so we usually drop the subscripts and simply write $\theta$.

\begin{example}
The zeroth Hochschild homology $\HH_0(R, M)$ is the quotient (of abelian groups) of $M$ by the subgroup generated by all elements of the form $mr - rm$. This quotient can also be described as $M \otimes_{R \otimes R^{\op}} R$. Hochschild homology defines a shadow on $\MORITABICAT$ with target $\mathbf{Ab}$ (or a shadow on $\MORITABICATK$ with target $\mathbf{Vect}_k$). This boils down to the fact that
\[
	\HH_0(R, M \otimes_S N) \cong \HH_0(S, N \otimes_R M)
\]
for bimodules $_RM_S$ and $_SN_R$, since the relation $rm \otimes n \sim m \otimes nr$ imposed by Hochschild homology mirrors the relation $ms \otimes n \sim m \otimes sn$ imposed by the passage from $M \otimes_{\mathbb{Z}} N$ to $M \otimes_S N$.
\end{example}

\begin{example}
There is a bicategory $\CHBICAT$ whose 0-cells are rings, 1-cells are nonnegatively graded chain complexes of bimodules, and 2-cells are chain maps. This bicategory has a shadow, which we also call $\HH_0$, that takes values in chain complexes of abelian groups and is given by
\[
	\HH_0(R, C) = C \otimes_{R \otimes R^{\op}} R[0],
\]
where $R[0]$ is the complex which has $R$ in degree zero and is zero in positive degree; that is, $\HH_0(R, C)_n \cong \HH_0(R, C_n)$. The cyclicity isomorphism
\[
	\theta : \HH_0(R, C \otimes_S D) \cong \HH_0(S, D \otimes_R C)
\]
has a sign: $\theta(c \otimes d) = (-1)^{|c||d|} d \otimes c$.
\end{example}

The following generalizes the symmetric monoidal Euler characteristic of Definition~\ref{defn:symmetric_monoidal_trace}.

\begin{defn}[\cite{Ponto_thesis}]
\label{defn:Euler_characteristic_bicat}
The \KEYWORD{Euler characteristic} $\chi(M)$ of a right dualizable 1-cell $M \in \mathscr{B}(R, S)$ is the map
\[\begin{tikzcd}
	\sh{U_R} \arrow[r, "\sh{\eta}"]
	& \sh{M \odot M^*} \arrow[r, "\theta", "\cong"']
	& \sh{M^* \odot M} \arrow[r, "\varepsilon"]
	& \sh{U_S}.
\end{tikzcd}\]
\end{defn}

\begin{example}
\label{ex:character}
A $k$-linear representation of a group $G$ can be viewed as module over the group algebra $k[G]$; this is a 1-cell $k[G] \pto k$ in $\MORITABICATK$. When the underlying vector space of $V$ is finite-dimensional, $V$ is right dualizable (Example~\ref{ex:dualizable_bimodule}) and $\chi(V)$ is a map $\sh{k[G]} \to \sh{k}$. If we use $\HH_0$ as the shadow, the quotient
\[
	k[G] \to \HH_0(k[G], k[G])
\]
identifies pairs of group elements $gh$ and $hg$; equivalently, it identifies conjugate elements, and thus $\HH_0(k[G], k[G])$ is $k[\mathrm{cl}(G)]$, the free $k$-vector space on the conjugacy classes of $G$. Since $k$ is commutative, $\HH_0(k, k) \cong k$, and thus $\chi(V)$ is a linear map $k[\mathrm{cl}(G)] \to k$, which amounts to a class function $G \to k$. By describing a right dual, coevaluation, and evaluation similar to those in Example~\ref{ex:vector_spaces}, we compute that $\chi(V)$ is the character of $V$; that is,
\[
	\chi(V)(g) = \tr(V \xrightarrow{g \cdot} V).
\]
\end{example}

Definition~\ref{defn:Euler_characteristic_bicat} generalizes in two ways. First, we can introduce an endomorphism of the dualizable object, similar to the transition from Examples~\ref{ex:vector_space_Euler_characteristic} and \ref{ex:CW_Euler_characteristic} to Examples~\ref{ex:vector_space_trace} and \ref{ex:Lefschetz_number}, respectively. Second, we can twist the endomorphism by 1-cells $Q$ and $P$, making the trace a map $\sh{Q} \to \sh{P}$ rather than simply a map between shadows of unit 1-cells.

\begin{defn}[{\cite[Definition 4.5.1]{Ponto_thesis}}]
\label{defn:bicategorical_trace}
Let $f : Q \odot M \to M \odot P$ be a 2-cell where $M$ is right dualizable. The \KEYWORD{trace} of $f$ is the composite
\[\begin{tikzcd}[column sep=2.1em]
	\sh{Q} \arrow[r, "\sh{1 \odot \eta}"]
	& \sh{Q \odot M \odot M^*} \arrow[r, "\sh{f \odot 1}"]
	& \sh{M \odot P \odot M^*} \arrow[r, "\theta", "\cong"']
	& \sh{M^* \odot M \odot P} \arrow[r, "\sh{\varepsilon \odot 1}"]
	& \sh{P}.
\end{tikzcd}\]
\end{defn}

The Euler characteristic for a right dualizable 1-cell $M \in \mathscr{B}(R, S)$ is the trace of the canonical isomorphism 2-cell $U_R \odot M \xrightarrow{\cong} M \odot U_S$; this is the sense in which we think of the Euler characteristic as a ``trace of identity map.''

Suppose we have right dualizable 1-cells $M \in \mathscr{B}(R, S)$ and $N \in \mathscr{B}(S, T)$ and 1-cells $Q, P, L$ which twist endomorphisms of $M$ and $N$:
\[
	f : Q \odot M \to M \odot P \qquad
	g : P \odot N \to N \odot L.
\]
The traces of $f$ and $g$ are maps $\sh{Q} \to \sh{P}$ and $\sh{P} \to \sh{L}$. The following theorem says that we can obtain the composite $\sh{Q} \to \sh{P} \to \sh{L}$ as a single trace with respect to $M \odot N$, which is dualizable by Proposition~\ref{prop:composite_of_dual_pairs}.

\begin{thm}[{\cite[Proposition 7.5]{Shadows_and_traces}}]
\label{thm:composite_of_dual_pairs_traces}
Let $M, N, Q, P, L, f, g$ be as above. Then the trace of
\[
	Q \odot M \odot N \xrightarrow{f \odot \id_N} M \odot P \odot N \xrightarrow{\id_M \odot g} M \odot N \odot L
\]
is
\[
	\sh{Q} \xrightarrow{\tr(f)} \sh{P} \xrightarrow{\tr(g)} \sh{L}.
\]
\end{thm}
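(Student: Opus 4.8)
The plan is to unwind $\tr(h)$ directly from Definition~\ref{defn:bicategorical_trace}, using the dual pair $(M \odot N,\, N^* \odot M^*)$ of Proposition~\ref{prop:composite_of_dual_pairs}, whose coevaluation is $\eta$ followed by the insertion of $\eta'$ and whose evaluation is $\varepsilon$ (inserted in the middle) followed by $\varepsilon'$, and then to transform the resulting composite in $\mathbf{T}$ until it visibly factors as $\tr(g) \circ \tr(f)$. Substituting these formulas, using functoriality of $\sh{-}$ to break the two outer maps into their constituent pieces built from $\eta$, $\eta'$, $\varepsilon$, $\varepsilon'$, and expanding $h = (1_M \odot g) \circ (f \odot 1_N)$, one rewrites $\tr(h)$ as a zig-zag through $\mathbf{T}$ whose only genuinely ``cyclic'' step is the single $\theta$ that transports the block $N^* \odot M^*$ from the right-hand end of $\sh{M \odot N \odot L \odot N^* \odot M^*}$ to the left-hand end.

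The heart of the argument is to split this $\theta$ into the two cyclicity isomorphisms that occur in $\tr(f)$ and $\tr(g)$. Applying the hexagon axiom of Definition~\ref{defn:shadow} with $M \odot N \odot L$, $N^*$, $M^*$ in the roles of $M$, $N$, $P$ shows that, modulo associativity constraints, the $\theta$ above factors as $\theta_2 \circ \theta_1$, where $\theta_1$ moves $M^*$ alone to the front and $\theta_2$ then moves $N^*$ to the front. Once $\theta$ is split this way, naturality of $\theta$ in each variable lets me slide the non-cyclic 2-cells --- the pieces $f \odot 1_N$ and $1_M \odot g$ of $h$, the insertions of $\eta$ and $\eta'$, and the evaluations --- across $\theta_1$ and $\theta_2$ as needed; the interchange law of the bicategory commutes the 2-cells acting on disjoint tensor factors; and the unit-compatibility triangle of Definition~\ref{defn:shadow} disposes of the associativity and unit constraints that Proposition~\ref{prop:composite_of_dual_pairs} conceals. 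After this reorganization the composite is literally the four steps of $\tr(f)$ followed by the four steps of $\tr(g)$, which is the claim.

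I expect the main obstacle to be exactly this middle step: making ``move $M^*$, then move $N^*$'' precise requires carefully inserting and cancelling associativity constraints so that the hexagon applies on the nose, and then checking that every naturality square for $\theta$ commutes with the relevant portion of $h$ and of the (co)evaluations. An alternative that largely sidesteps this coherence bookkeeping is to argue with string diagrams on the cylinder: $\tr(h)$ is the diagram in which the $M$-string and the $N$-string each wind once around the cylinder in concentric loops, with $f$ attached to the $M$-loop and $g$ to the $N$-loop, and an isotopy that sweeps the $f$-labelled portion once around the $M$-loop and the $g$-labelled portion once around the $N$-loop reads off $\tr(g) \circ \tr(f)$; the algebraic argument above is precisely the bookkeeping behind that isotopy.
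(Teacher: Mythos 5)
Your proposal is correct, but note that the paper does not actually prove this statement: it is quoted as background from Ponto--Shulman (their Proposition 7.5), and the surrounding text explicitly recommends the string-diagram proof over ``a long string of equations or a large commutative diagram.'' Your primary argument is exactly that algebraic alternative, carried out correctly: the key step --- instantiating the hexagon axiom with $M \odot N \odot L$, $N^*$, $M^*$ so that the single cyclicity isomorphism $\theta$ moving the block $N^* \odot M^*$ to the front factors (up to associators) as ``move $M^*$ to the front, then move $N^*$'' --- is the right decomposition, and after sliding $f \odot 1$, the insertion of $\eta'$, $g$, and $\varepsilon$ across the two $\theta$'s by naturality and interchanging maps on disjoint $\odot$-factors, the composite does literally become $\tr(f)$ followed by $\tr(g)$; I checked that this bookkeeping closes up. Your closing string-diagram sketch (sweeping $f$ around the $M$-loop and $g$ around the $N$-loop of the cylinder) is the proof the paper and the cited source actually have in mind, so you have in effect supplied both routes. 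One small imprecision: the unit-compatibility triangle of the shadow axioms is not what absorbs the associativity constraints hidden in the composite dual pair --- those are handled by the hexagon together with bicategorical coherence and naturality of the associators; the unit axiom only enters through the suppressed unitors when $\eta$ and $\eta'$ are inserted. This does not affect the validity of the argument.
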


When applied to the isomorphisms
\[
	U_R \odot M \odot N \xrightarrow{\cong} M \odot U_S \odot N \xrightarrow{\cong} M \odot N \odot U_T
\]
this theorem gives the following.

\begin{cor}
\label{cor:composite_of_dual_pairs_Euler_characteristics}
If $M \in \mathscr{B}(R, S)$ and $N \in \mathscr{B}(S, T)$ are right dualizable then
\[
	\chi(M \odot N) = \chi(N) \circ \chi(M).
\]
\end{cor}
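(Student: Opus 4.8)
The plan is to deduce this as an immediate consequence of Theorem~\ref{thm:composite_of_dual_pairs_traces}, exactly as the discussion preceding the statement suggests. Recall that $\chi(M)$ is by definition the trace (in the sense of Definition~\ref{defn:bicategorical_trace}, with $Q = U_R$ and $P = U_S$) of the canonical unit isomorphism $\phi_M : U_R \odot M \xrightarrow{\cong} M \odot U_S$, and likewise $\chi(N) = \tr(\phi_N)$ for $\phi_N : U_S \odot N \xrightarrow{\cong} N \odot U_T$. Applying Theorem~\ref{thm:composite_of_dual_pairs_traces} with $f = \phi_M$, $g = \phi_N$, $Q = U_R$, $P = U_S$, $L = U_T$ — using the dual pair $(M \odot N, N^* \odot M^*)$ provided by Proposition~\ref{prop:composite_of_dual_pairs} — shows that the composite 2-cell
\[
U_R \odot M \odot N \xrightarrow{\phi_M \odot \id_N} M \odot U_S \odot N \xrightarrow{\id_M \odot \phi_N} M \odot N \odot U_T
\]
has trace $\chi(N) \circ \chi(M) : \sh{U_R} \to \sh{U_T}$.

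It then remains to identify this composite 2-cell with the canonical isomorphism $\phi_{M \odot N} : U_R \odot (M \odot N) \xrightarrow{\cong} (M \odot N) \odot U_T$, since by Definition~\ref{defn:Euler_characteristic_bicat} we have $\chi(M \odot N) = \tr(\phi_{M \odot N})$. Both 2-cells are invertible and are built entirely from the unitors and associators of $\mathscr{B}$ with the same source and target, so they agree by coherence for bicategories; alternatively one can check the equality directly from the pentagon and triangle axioms. This is the only content beyond citing the earlier theorem, and since it amounts to bookkeeping with the unit and associativity isomorphisms we have been suppressing throughout, I expect it to be the main (though entirely routine) obstacle.

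A more self-contained alternative would avoid Theorem~\ref{thm:composite_of_dual_pairs_traces} and instead expand $\chi(M \odot N)$ via Definition~\ref{defn:Euler_characteristic_bicat} using the explicit coevaluation and evaluation for $(M \odot N, N^* \odot M^*)$ from the proof of Proposition~\ref{prop:composite_of_dual_pairs}, and then use naturality of $\theta$ together with the hexagon and unit axioms for the shadow to slide the inner pair $\eta', \varepsilon'$ past the cyclicity isomorphism until the diagram factors as $\chi(N) \circ \chi(M)$. This is essentially the string-diagram proof of Theorem~\ref{thm:composite_of_dual_pairs_traces} specialized to identity twistings, so it offers no real savings, and I would prefer the short deduction above.
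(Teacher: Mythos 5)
Your proposal is correct and follows exactly the paper's own (implicit) argument: the corollary is obtained by applying Theorem~\ref{thm:composite_of_dual_pairs_traces} to the unit isomorphisms $U_R \odot M \odot N \xrightarrow{\cong} M \odot U_S \odot N \xrightarrow{\cong} M \odot N \odot U_T$. Your extra remark that this composite agrees with the canonical isomorphism $U_R \odot (M \odot N) \xrightarrow{\cong} (M \odot N) \odot U_T$ by bicategorical coherence is the routine bookkeeping the paper leaves unstated.
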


\begin{example}
\label{ex:res_and_ind_characters}
The preceding corollary can be used to obtain the formulas for characters of restricted and induced representations. Given a subgroup $H \leq G$ and a $G$-representation $V$, the restricted $H$-representation is
\[
	\Res_H^G(V) := {}_{\varphi}k[G] \otimes_{k[G]} V,
\]
where $\varphi : k[H] \to k[G]$ is the ring homomorphism induced by the inclusion of $H$ into $G$. The bimodule ${}_{\varphi}k[G]$ is right dualizable, since $(_{\varphi}S, S_{\varphi})$ is always a dual pair given a ring homomorphism $\varphi : R \to S$ \cite[Example 16.4.2]{MaySigurdsson}. By Corollary~\ref{cor:composite_of_dual_pairs_Euler_characteristics}, $\chi(\Res_H^G(H))$ equals the composite
\[
	\sh{k[H]} \xrightarrow{\chi({}_{\varphi}k[G])} \sh{k[G]} \xrightarrow{\chi(V)} \sh{k}.
\]
The first map simply takes $[h] \in \HH_0(k[H])$ to $[h] \in \HH_0(k[G])$, which is well-defined since elements which are conjugate in $H$ are also conjugate in $G$. Now, given an $H$-representation $W$, the induced $G$-representation is
\[
	\Ind_H^G(W) := k[G]_{\varphi} \otimes_{k[H]} V.
\]
If $[G : H] < \infty$, then $k[G]_{\varphi}$ is right dualizable with right dual $_{\varphi}k[G]$ (this is \emph{not} true for a general ring homomorphism $\varphi$). A coevaluation and evaluation for the dual pair $(k[G]_{\varphi}, {}_{\varphi}k[G])$ are given by
\[\begin{aligned}
	k[G] &\xrightarrow{\eta} k[G]_{\varphi} \otimes_{k[H]} {}_{\varphi}k[G] &\qquad\qquad {}_{\varphi}k[G] \otimes_{k[G]} k[G]_{\varphi} &\xrightarrow{\varepsilon} k[H] \\
	g &\mapsto \sum_{g_iH \in G/H} gg_i \otimes g_i^{-1} & g \otimes g' &\mapsto \begin{cases} gg', & gg' \in H \\ 0, & gg' \notin H \end{cases}
\end{aligned}\]
where the $g_i$ are a choice of coset representatives for $G/H$. By Corollary~\ref{cor:composite_of_dual_pairs_Euler_characteristics}, $\chi(\Ind_H^G(V))$ is equal to
\[
	\sh{k[G]} \xrightarrow{\chi(k[G]_{\varphi})} \sh{k[H]} \xrightarrow{\chi(V)} \sh{k}.
\]
The first map takes $[g]$ to
\[
	\sum_{g_iH \in G/H, \, g_i^{-1}gg_i \in H} [g_i^{-1}gg_i] = \frac{1}{|H|} \sum_{s \in G, \, s^{-1}gs \in H} [s^{-1}gs];
\]
then after applying $\chi(V)$ we get the usual character induction formula
\[
	\chi(\Ind_H^G(V))(g) = \frac{1}{|H|} \sum_{s \in G, \, s^{-1}gs \in H} \chi(V)(s^{-1}gs).
\]
\end{example}

\begin{defn}
\label{defn:mate}
Let $f : Q \odot M \to N \odot P$ be a 2-cell where $(M, M^*)$ and $(N, N^*)$ are dual pairs. The \KEYWORD{mate} of $f$ is the map $f^* : N^* \odot Q \to P \odot M^*$ given by
\[
	N^* \odot Q \xrightarrow{\id \odot \eta} N^* \odot Q \odot M \odot M^* \xrightarrow{\id \odot f \odot \id} N^* \odot N \odot P \odot M^* \xrightarrow{\varepsilon \odot \id} P \odot M^*.
\]
\end{defn}

There is a construction analogous to Definition~\ref{defn:bicategorical_trace} for a twisted endomorphism $M^* \odot Q \to P \odot M^*$ of a \emph{left} dualizable 1-cell $M^*$. Using this, we obtain the following.

\begin{prop}[{\cite[Lemma 4.5.3]{Ponto_thesis}}]
\label{prop:trace_of_mate}
Let $f : Q \odot M \to M \odot P$ be a 2-cell where $M$ is right dualizable. Then $\tr(f) = \tr(f^*)$.
\end{prop}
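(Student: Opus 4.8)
The plan is to write both $\tr(f)$ and $\tr(f^*)$ as explicit composites in the shadow category and to reduce one to the other using naturality of the cyclicity isomorphism $\theta$ together with a single triangle identity for $(M, M^*)$. Say $M \in \mathscr{B}(R, S)$, so $M^* \in \mathscr{B}(S, R)$, and the twisting 1-cells are $Q \in \mathscr{B}(R, R)$ and $P \in \mathscr{B}(S, S)$. By Definition~\ref{defn:bicategorical_trace}, $\tr(f)$ is
\[ \sh{Q} \xrightarrow{\sh{\id \odot \eta}} \sh{Q \odot M \odot M^*} \xrightarrow{\sh{f \odot \id}} \sh{M \odot P \odot M^*} \xrightarrow{\theta} \sh{M^* \odot M \odot P} \xrightarrow{\sh{\varepsilon \odot \id}} \sh{P}, \]
while the construction alluded to just before the statement — the analogue of Definition~\ref{defn:bicategorical_trace} for the left dualizable 1-cell $M^*$, whose left dual is $M$ with the same $\eta$ and $\varepsilon$ — makes $\tr(f^*)$ the composite
\[ \sh{Q} \xrightarrow{\sh{\eta \odot \id}} \sh{M \odot M^* \odot Q} \xrightarrow{\sh{\id \odot f^*}} \sh{M \odot P \odot M^*} \xrightarrow{\theta} \sh{P \odot M^* \odot M} \xrightarrow{\sh{\id \odot \varepsilon}} \sh{P}. \]

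First I would substitute the definition of the mate $f^* = (\varepsilon \odot \id) \circ (\id \odot f \odot \id) \circ (\id \odot \eta)$ (Definition~\ref{defn:mate} with $N = M$) into this composite, obtaining a longer chain in the shadow that now contains two occurrences of $\eta$ and two of $\varepsilon$. Since $\tr(f)$ contains only one of each, the goal is to cancel one $\eta$--$\varepsilon$ pair via a triangle identity. The obstruction to doing this immediately is that the offending pair is not contiguous: it is separated both by the 2-cell $f$ and by the cyclicity isomorphism $\theta$. So the next step is to invoke naturality of $\theta$ (in both variables) to slide the instance of $\theta$ backward through the maps built from $f$ and from the $\varepsilon$ coming from the mate, until the diagram exhibits a contiguous zig-zag $M \xrightarrow{\eta \odot \id} M \odot M^* \odot M \xrightarrow{\id \odot \varepsilon} M$ (or its $M^*$-mirror). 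Collapsing that zig-zag to an identity via one of the triangle identities of Definition~\ref{defn:dual_pair_bicat} leaves a composite that is $\tr(f)$ up to a single instance of $\theta$ and the suppressed associativity and unit constraints; a final appeal to the unit-triangle axiom of the shadow (Definition~\ref{defn:shadow}), together with the fact that $\theta_{X, Y}$ and $\theta_{Y, X}$ are mutually inverse, identifies the two.

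The main obstacle is bookkeeping: keeping track of which $\eta$/$\varepsilon$ cancels which, and of the associators, unitors, and several copies of $\theta$ that have been suppressed. Conceptually the argument is a single snake/zig-zag move, and this is exactly the situation in which string diagrams pay off: in string-diagram form, $\tr(f^*)$ is the picture for $\tr(f)$ in which the $M$-strand has been routed around an extra cap and cup, and ``pulling the strand straight'' is literally the triangle identity, after which the two closed diagrams on the annulus coincide on the nose. I would therefore carry out the proof in string diagrams, isolating the triangle identity as the only nontrivial step and leaving the coherence manipulations implicit in the graphical calculus.
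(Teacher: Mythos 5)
Your argument is correct and is essentially the standard one: the paper gives no proof of this proposition, citing Ponto's thesis (Lemma 4.5.3) and remarking that it admits a string-diagram proof, which is exactly the snake-move argument you outline (expand the mate, slide maps past each other by interchange and naturality of $\theta$, cancel one coevaluation--evaluation pair by a triangle identity). One small bookkeeping point: after the cancellation, identifying the leftover composite $\sh{1 \odot \varepsilon} \circ \theta_{M, P\odot M^*}$ with $\sh{\varepsilon \odot 1} \circ \theta_{M\odot P, M^*}$ requires the hexagon axiom of the shadow (together with naturality and the unit axiom), not the unit-triangle axiom and invertibility of $\theta$ alone, though this is exactly the sort of coherence the graphical calculus is meant to absorb.
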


\subsection{String diagrams for bicategories}

String diagrams provide a useful graphical calculus for working with composites of many morphisms in monoidal categories or bicategories. They were first used by Penrose \cite{Penrose_negdimten, Penrose_spinors} and given a rigorous foundation by Joyal and Street \cite{JoyalStreet_geotencal, JoyalStreetVerity_tracedmoncat}. For a comprehensive overview of string diagrams in various kinds of monoidal categories, see \cite{Selinger_graphical}. Ponto and Shulman introduced string diagrams for shadowed bicategories in \cite{Shadows_and_traces} and rigorously justified their use, describing a way to assign a unique value to a string diagram which is invariant under deformation. Consequently, string diagrams can be used to prove results like Theorem~\ref{thm:composite_of_dual_pairs_traces} or Proposition~\ref{prop:trace_of_mate}; these pictorial proofs are usually easier and more illuminating than the alternative (a long string of equations or a large commutative diagram).

In a string diagram for a bicategory, 0-cells are represented by colored regions, 1-cells by strings, and 2-cells by nodes, as in Figure~\ref{fig:string_diagrams_bicat}. String diagrams are read from top to bottom; we think of each 2-cell as a ``machine,'' with its source 1-cell as the ``input'' strings feeding into it from above and its target 1-cell as the ``output'' strings emerging downward from it. The composite of 1-cells is formed by placing their strings next to each other; by coloring the region between the strings, we ensure that two 1-cells can only be composed if the target 0-cell of the first matches the source 0-cell of the second. We do not draw strings for unit 1-cells.

\begin{figure}[h]
	\centering
	\begin{tabular}{m{17mm}m{30mm}cm{17mm}m{20mm}}
	\begin{center}Object\\\ \\$R$\end{center} &
	\begin{center}\begin{tikzpicture}
		\fill[color1fill] (0,0) rectangle (2,2);
		\node[anchor=north west,color1dark,re] at (0,2) {$R$};
	\end{tikzpicture}\end{center}
	&&
	\begin{center}1-cell\\\ \\$R\xrightarrow{M} S$\end{center} &
	\begin{center}\begin{tikzpicture}
		\fill[color1fill] (0,0) rectangle (1,2);
		\fill[color2fill] (1,0) rectangle (2,2);
		\node[anchor=north west,color1dark,re] at (0,2) {$R$};
		\node[anchor=north east,color2dark,re] at (2,2) {$S$};
		\draw (1,2) -- node[ed] {$M$} +(0,-2);
	\end{tikzpicture}\end{center}
	\\\\
	\begin{center}Composite\\\ \\$M\odot N$\end{center} &
	\begin{center}\begin{tikzpicture}
		\fill[color1fill] (0,0) rectangle (1,2);
		\fill[color2fill] (1,0) rectangle (2,2);
		\fill[color3fill] (2,0) rectangle (3,2);
		\node[anchor=north west,color1dark,re] at (0,2) {$R$};
		\node[anchor=north,color2dark,re] at (1.5,2) {$S$};
		\node[anchor=north east,color3dark,re] at (3,2) {$T$};
		\draw (1,2) -- node[ed,swap] {$M$} +(0,-2);
		\draw (2,2) -- node[ed] {$N$} +(0,-2);
	\end{tikzpicture}\end{center}
	&&
	\begin{center}2-cell\\$\xymatrix{R\rtwocell^{M}_N{f} & S}$\end{center} &
	\begin{center}\begin{tikzpicture}
		\fill[color1fill] (0,0) rectangle (1,2);
		\fill[color2fill] (1,0) rectangle (2,2);
		\node[anchor=north west,color1dark,re] at (0,2) {$R$};
		\node[anchor=north east,color2dark,re] at (2,2) {$S$};
		\node[fill=white,draw,circle,inner sep=1pt] (f) at (1,1) {$f$};
		\draw (1,2) -- node[ed] {$M$} (f) -- node[ed] {$N$} +(0,-1);
	\end{tikzpicture}\end{center}
\end{tabular}
	\caption{String diagrams for bicategories}
	\label{fig:string_diagrams_bicat}
\end{figure}
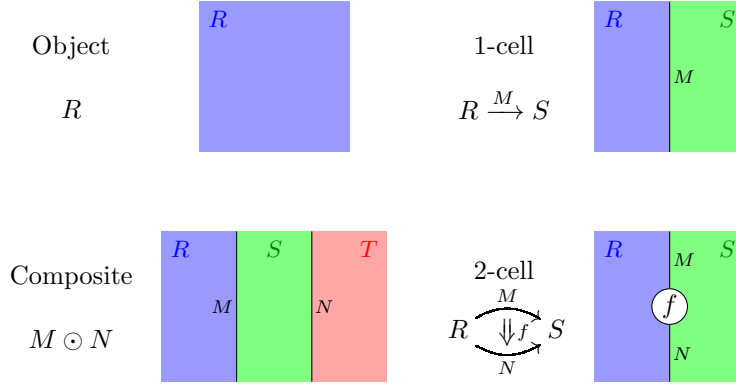

The coevaluation $\eta : U_R \to M \odot M^*$ and evaluation $\varepsilon : M^* \odot M \to U_S$ for a dual pair $(M, M^*)$ (Definition~\ref{defn:dual_pair_bicat}) are shown in Figure~\ref{fig:dual_pair}, and the triangle identities (see Definition~\ref{defn:dual_pair_sm}) in Figure~\ref{fig:triangle_identities}. As the figures show, we usually omit the nodes labeled $\eta$ and $\varepsilon$, so that it looks like the string is simply making a $180^{\circ}$ turn; we think of an $M^*$ string as a ``wrong-way'' $M$ string (i.e.~traveling upward instead of downward). With this convention, the triangle identities essentially say that bent strings can be straightened out.

\begin{figure}[h]
	\centering
	\begin{subfigure}[b]{0.5\textwidth}
	\centering
	\begin{tabular}{m{26mm}m{10mm}m{26mm}}
		\begin{center}\begin{tikzpicture}
			\fill[color1fill] (0, 0) rectangle (2.6, 2.6);
			\filldraw[color2fill] (0.8, 0) -- node[ed]{$M$} ++(0, 1.2) arc(180:90:0.5) node[vert]{$\eta$} arc(90:0:0.5) -- node[ed]{$M^*$} ++(0, -1.2);
			\node[anchor=north west,color1dark,re] at (0, 2.6) {$R$};
			\node[anchor=south,color2dark,re] at (1.3, 0) {$S$};
		\end{tikzpicture}\end{center}
		& \begin{center} or \end{center} &
		\begin{center}\begin{tikzpicture}
			\fill[color1fill] (0, 0) rectangle (2.6, 2.6);
			\filldraw[color2fill] (0.8, 0) -- node[ed]{$M$} ++(0, 1.2) arc(180:0:0.5) -- node[ed]{$M^*$} ++(0, -1.2);
			\node[anchor=north west,color1dark,re] at (0, 2.6) {$R$};
			\node[anchor=south,color2dark,re] at (1.3, 0) {$S$};
		\end{tikzpicture}\end{center}
	\end{tabular}
	\caption{Coevaluation $\eta : U_R \to M \odot M^*$}
\end{subfigure}
\par\bigskip
\begin{subfigure}[b]{0.5\textwidth}
	\centering
	\begin{tabular}{m{26mm}m{10mm}m{26mm}}
		\begin{center}\begin{tikzpicture}
			\fill[color2fill] (0, 0) rectangle (2.6, 2.6);
			\filldraw[color1fill] (0.8, 2.6) -- node[ed,swap]{$M^*$} ++(0, -1.2) arc(-180:-90:0.5) node[vert]{$\varepsilon$} arc(-90:0:0.5) -- node[ed,swap]{$M$} ++(0, 1.2);
			\node[anchor=south west,color2dark,re] at (0, 0) {$S$};
			\node[anchor=north,color1dark,re] at (1.3, 2.6) {$R$};
		\end{tikzpicture}\end{center}
		& \begin{center} or \end{center} &
		\begin{center}\begin{tikzpicture}
			\fill[color2fill] (0, 0) rectangle (2.6, 2.6);
			\filldraw[color1fill] (0.8, 2.6) -- node[ed,swap]{$M^*$} ++(0, -1.2) arc(-180:0:0.5) -- node[ed,swap]{$M$} ++(0, 1.2);
			\node[anchor=south west,color2dark,re] at (0, 0) {$S$};
			\node[anchor=north,color1dark,re] at (1.3, 2.6) {$R$};
		\end{tikzpicture}\end{center}
	\end{tabular}
	\caption{Evaluation $\varepsilon : M^* \odot M \to U_S$}
\end{subfigure}
	\caption{The coevaluation and evaluation for a dual pair}
	\label{fig:dual_pair}
\end{figure}
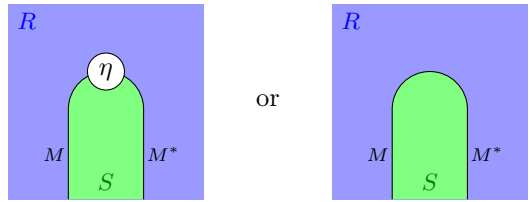
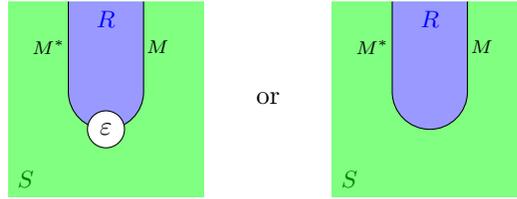

\begin{figure}[h]
	\centering
	\begin{tabular}{m{36mm}m{4mm}m{20mm}}
	\begin{center}\begin{tikzpicture}
		\clip (0, 0) -- (3.6, 0) -- (3.6, 3.6) -- (0, 3.6);
		\fill[color1fill] (0, 0) rectangle (3.6, 3.6);
		\filldraw[color2fill] (0.8, 0) -- ++(0, 1.4) -- node[ed]{$M$} ++(0, 0.8) arc(180:0:0.5) -- node[ed]{$M^*$} ++(0, -0.8) arc(-180:0:0.5) -- node[ed,swap]{$M$} ++(0, 0.8) -- (2.8, 3.7) -- (3.7, 3.7) -- (3.7, -0.1) -- (-0.1, -0.1);
		\node[anchor=north west,color1dark,re] at (0, 3.6) {$R$};
		\node[anchor=south east,color2dark,re] at (3.6, 0) {$S$};
	\end{tikzpicture}\end{center}
	& \begin{center} = \end{center} &
	\begin{center}\begin{tikzpicture}
		\fill[color1fill] (0, 0) rectangle (1, 3.6);
		\fill[color2fill] (1, 0) rectangle (2, 3.6);
		\draw (1, 0) -- node[ed]{$M$} (1, 3.6);
		\node[anchor=north west,color1dark,re] at (0, 3.6) {$R$};
		\node[anchor=south east,color2dark,re] at (2, 0) {$S$};
	\end{tikzpicture}\end{center}
	\\ \\
	\begin{center}\begin{tikzpicture}
		\clip (0, 0) -- (3.6, 0) -- (3.6, 3.6) -- (0, 3.6);
		\fill[color2fill] (0, 0) rectangle (3.6, 3.6);
		\filldraw[color1fill] (0.8, 3.6) -- ++(0, -1.4) -- node[ed,swap]{$M^*$} ++(0, -0.8) arc(-180:0:0.5) -- node[ed]{$M$} ++(0, 0.8) arc(180:0:0.5) -- node[ed]{$M^*$} ++(0, -0.8) -- (2.8, -0.1) -- (3.7, -0.1) -- (3.7, 3.7) -- (-0.1, 3.7);
		\node[anchor=north west,color2dark,re] at (0, 3.6) {$S$};
		\node[anchor=south east,color1dark,re] at (3.6, 0) {$R$};
	\end{tikzpicture}\end{center}
	& \begin{center} = \end{center} &
	\begin{center}\begin{tikzpicture}
		\fill[color2fill] (0, 0) rectangle (1, 3.6);
		\fill[color1fill] (1, 0) rectangle (2, 3.6);
		\draw (1, 0) -- node[ed]{$M^*$} (1, 3.6);
		\node[anchor=north west,color2dark,re] at (0, 3.6) {$S$};
		\node[anchor=south east,color1dark,re] at (2, 0) {$R$};
	\end{tikzpicture}\end{center}
\end{tabular}
	\caption{The triangle identities for the dual pair $(M, M^*)$}
	\label{fig:triangle_identities}
\end{figure}
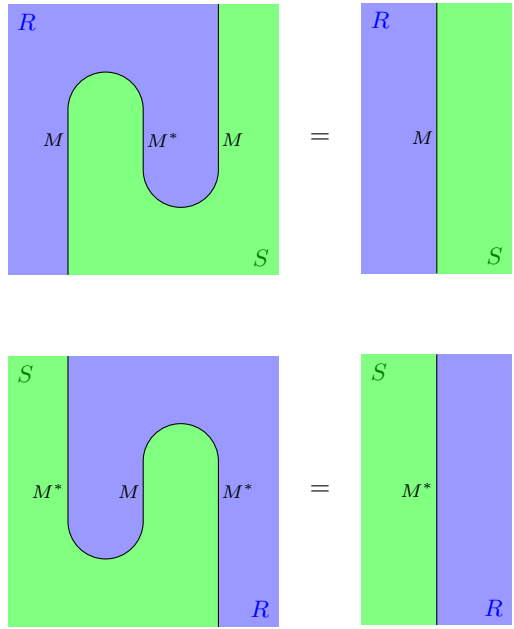

Shadows are represented graphically by closing up planar string diagrams into cylinders, as in Figure~\ref{fig:shadow}. The cyclicity isomorphism $\theta_{M,N} : \sh{M \odot N} \xrightarrow{\cong} \sh{N \odot M}$ is represented by allowing either the $M$ or $N$ string to travel around the back of the cylinder (it makes no difference which, since $\theta_{M,N} = \theta_{N,M}^{-1}$). A string diagram of the bicategorical trace (Definition~\ref{defn:bicategorical_trace}) appears in Figure~\ref{fig:bicat_trace}.

\begin{figure}[h]
	\centering
	\begin{subfigure}[b]{0.3\textwidth}
	\centering
	\begin{tikzpicture}
		\bgcylinder{0,0}{2}{1.2}{.3}{color1}{color1}
		\node[anchor=south west,color1dark,re] at (dl) {$R$};
		\draw (top) -- node[ed] {$M$} (bot);
	\end{tikzpicture}
	\caption{Shadow $\sh{M}$}
\end{subfigure}
\begin{subfigure}[b]{0.3\textwidth}
	\centering
	\begin{tikzpicture}
		\bgcylinder{0,0}{3.7}{1.2}{.3}{color2}{color1}
		\begin{pgfonlayer}{foreground}
			\drawtheta{(0,2.1)}{0}{1}{th}{}
		\end{pgfonlayer}
		\node[anchor=south east,color2dark,re] at (dr) {$S$};
		\filldraw[color1fill] let \p1 = ($(ul)!.65!(ur)$) in
			(thR) -- ++(0.1,0) -- (ur') -- ($(ul)!.65!(ur)$) -- (\x1, 3.1) to[out=-90,in=90] node[ed] {$N$} (thR);
		\filldraw[color1fill] let \p1 = ($(dl)!.35!(dr)$), \p2 = ($(dl)!.65!(dr)$) in
			(thL) to[out=-90,in=90] (\x1, 0.0) -- (\p1 |- bot') -- (\p2 |- bot') -- (\x2, 0.0) to[out=90,in=-90] node[ed,near start] {$M$} (\x1, 3.29) -- ($(ul)!.35!(ur)$) -- (ul') -- (ul' |- thL) -- (thL);
		\node[anchor=north west,color1dark,re] at ($(ul)!.1!(dl)$) {$R$};
	\end{tikzpicture}
	\caption{Cyclicity $\theta_{M,N}$}
\end{subfigure}
	\caption{String diagrams for shadows}
	\label{fig:shadow}
\end{figure}

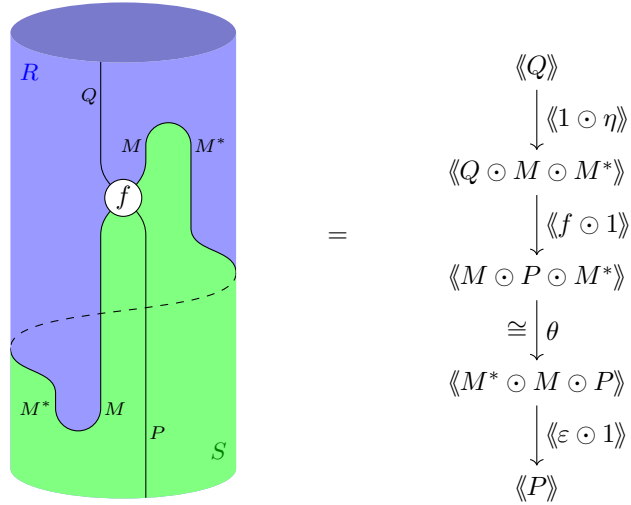
\begin{figure}[h]
	\centering
	\begin{tabular}{m{30mm}m{20mm}m{25mm}}
	\begin{tikzpicture}
		\bgcylinder{0,0}{5.8}{1.5}{.4}{color1}{color1}

		\coordinate (f) at (1.5, 3.6) {};
		\path (f) -- ++(-0.3, 0.5) coordinate (ful);
		\path (f) -- ++(0.3, 0.5) coordinate (fur);
		\path (f) -- ++(-0.3, -0.5) coordinate (fdl);
		\path (f) -- ++(0.3, -0.5) coordinate (fdr);

		\begin{pgfonlayer}{foreground}
			\drawtheta{(f)}{-1.0}{1}{th}{}
		\end{pgfonlayer}

		\filldraw[color2fill] (f) to[out=-135, in=90] (fdl) -- (fdl |- thL) -- node[ed,pos=1]{$M$} ++(0, -0.8) arc (0:-180:0.3) -- node[ed,pos=0]{$M^*$} ++(0, 0.2) to[out=90, in=-90] (thL) -- (thL') -- (thL' |- bot') -- (thR' |- bot') -- (thR') -- (thR) to[out=90, in=-90] ++(-0.6, 0.6) -- ($(fur) + (0.6, 0)$) -- node[ed,swap,pos=1]{$M^*$} ++(0, 0.2) arc(0:180:0.3) -- node[ed,swap,pos=0]{$M$} (fur) to[out=-90, in=45] (f);
		\draw (f) to[out=135, in=-90] (ful) -- node[ed]{$Q$} (ful |- ul);
		\draw (f) to[out=-45, in=90] (fdr) -- node[ed,near end]{$P$} (fdr |- bot);

		\node[vert] at (f) {$f$};
		\node[anchor=north west,color1dark,re] at ($(ul)!.05!(dl)$) {$R$};
		\node[anchor=south east,color2dark,re] at (dr) {$S$};
	\end{tikzpicture}
	& \begin{center} = \end{center} &
	\begin{tikzpicture}
		\node at (0, 6.5) {}; 
		\node (A) at (0, 5.6) {$\sh{Q}$};
		\node (B) at (0, 4.2) {$\sh{Q \odot M \odot M^*}$};
		\node (C) at (0, 2.8) {$\sh{M \odot P \odot M^*}$};
		\node (D) at (0, 1.4) {$\sh{M^* \odot M \odot P}$};
		\node (E) at (0, 0) {$\sh{P}$};

		\draw[->] (A) -- node[right]{$\sh{1 \odot \eta}$} (B);
		\draw[->] (B) -- node[right]{$\sh{f \odot 1}$} (C);
		\draw[->] (C) -- node[right]{$\theta$} node[left]{$\cong$} (D);
		\draw[->] (D) -- node[right]{$\sh{\varepsilon \odot 1}$} (E);
	\end{tikzpicture}
\end{tabular}
	\caption{The bicategorical trace}
	\label{fig:bicat_trace}
\end{figure}

\section{Closed bicategories}
\label{sec:closed_bicategories}

The bicategory $\MORITABICAT$ has additional structure that we have not yet made use of, namely the hom-functors. Axiomatizing this structure is essential for generalizing notions of \emph{cotrace} that appear in the literature, just as the symmetric monoidal and bicategorical trace generalize many familiar examples.

\begin{defn}[{\cite[Definition 16.3.1]{MaySigurdsson}}]
\label{defn:closed_bicategory}
A \KEYWORD{(left and right) closed bicategory} is a bicategory $\mathscr{B}$ equipped with left and right internal hom-functors
\[
	- \triangleleft - : \mathscr{B}(R, T) \times \mathscr{B}(R, S)^{\op} \to \mathscr{B}(S, T)
\]
and
\[
	- \triangleright - : \mathscr{B}(S, T)^{\op} \times \mathscr{B}(R, T) \to \mathscr{B}(R, S)
\]
for all triples of 0-cells $R, S, T$ and natural isomorphisms
\begin{equation}
\label{eq:closed_bicat_tensor_hom}
	\mathscr{B}(S, T)(N, P \triangleleft M) \cong \mathscr{B}(R, T)(M \odot N, P) \cong \mathscr{B}(R, S)(M, N \triangleright P)
\end{equation}
for all triples of 1-cells $M : R \pto S$, $N : S \pto T$, and $P : R \pto T$.
\end{defn}

Instead of the functors of two variables $- \triangleright -$, it is enough to ask for a right adjoint $N \triangleright -$ to the functor $- \odot N$ for each 1-cell $N$; these right adjoints automatically assemble into unique functors $- \triangleright -$ making the second isomorphism of (\ref{eq:closed_bicat_tensor_hom}) natural in $N$ (as well as $M$ and $P$). Similar remarks apply to $- \triangleleft -$.

Our most frequently used example of a bicategory, $\MORITABICAT$ (or $\MORITABICATK$), is a closed bicategory. For bimodules $_RM_S$ and $_RP_T$, the left internal hom-object is the $(S, T)$-bimodule
\[
	P \triangleleft M := \Hom_R(M, P),
\]
and for bimodules $_SN_T$ and $_RP_T$, the right internal hom-object is the $(R, S)$-bimodule
\[
	N \triangleright P := \Hom_T(N, P).
\]
We remember the hom-functors for bimodules by noting that triangle ($\triangleleft$ or $\triangleright$) always points from source to target, and the direction it points indicates on which side the maps are linear (e.g.~the \emph{right}-pointing triangle $\triangleright$ indicates that $N \triangleright P$ is the set of \emph{right} $T$-linear maps from $N$ to $P$).

There are \KEYWORD{evaluation} maps
\[
	(N \triangleright P) \odot N \xrightarrow{\ev} P \qquad \text{and} \qquad M \odot (P \triangleleft M) \xrightarrow{\ev} P
\]
(the transposes of $\id_{N \triangleright P}$ and $\id_{P \triangleleft M}$, respectively), which are natural in $P$ and extranatural in $N$ and $M$ (respectively). For bimodules, these are the familiar evaluation maps $\varphi \otimes n \mapsto \varphi(n)$ and $m \otimes \psi \mapsto \psi(m)$. Similarly, there are \KEYWORD{coevaluation} maps
\[
	M \xrightarrow{\coev} N \triangleright (M \odot N) \qquad \text{and} \qquad N \xrightarrow{\coev} (M \odot N) \triangleleft M
\]
(the transposes of $\id_{M \odot N}$), which are natural in $M$ and $N$ (respectively) and extranatural in $N$ and $M$ (respectively). There is a natural isomorphism
\[\begin{tikzcd}
	(M \odot N) \triangleright P
		\arrow[r, "t", "\cong"']
	& M \triangleright (N \triangleright P)
\end{tikzcd}\]
(which we call $t$ for ``transpose'' or ``tensor-hom adjunction'') given by the transpose of the transpose of
\[
	((M \odot N) \triangleright P) \odot M \odot N \xrightarrow{\ev} P
\]
and whose inverse is the transpose of
\[
	(M \triangleright (N \triangleright P)) \odot M \odot N \xrightarrow{\ev \odot 1} (N \triangleright P) \odot N \xrightarrow{\ev} P.
\]

There is a similar natural isomorphism for $\triangleleft$, as well as a natural isomorphism
\[\begin{tikzcd}
	(N \triangleright P) \triangleleft M
		\arrow[r, "a", "\cong"']
	& N \triangleright (P \triangleleft M),
\end{tikzcd}\]
which we call $a$ for ``associator.''

One of the axioms of a bicategory is a pentagon ensuring that any two ways of reparenthesizing four composed 1-cells through associators are equal. In a closed bicategory, there are several more associativity pentagons---not axioms but rather provably commuting diagrams---since there are now three ways to join two 1-cells together ($\odot$, $\triangleleft$, and $\triangleright$). We describe some of them below since we will often need them later; note that they come in pairs since there are both left and right internal hom-functors.

\begin{lem}
\label{lem:pentagon_a_and_t_1}
In a closed bicategory, the following diagrams commute for any 1-cells $W, X, Y, Z$ for which the diagrams make sense:
\[
\begin{tikzpicture}[xscale=1.8]
	\newcommand{\RADIUS}{2}
	\node(A) at ({\RADIUS*cos(90+0*72)}, {\RADIUS*sin(90+0*72)}){$((W \odot X) \odot Y) \triangleright Z$};
	\node(B) at ({\RADIUS*cos(90+1*72)}, {\RADIUS*sin(90+1*72)}){$(W \odot (X \odot Y)) \triangleright Z$};
	\node(C) at ({\RADIUS*cos(90+2*72)}, {\RADIUS*sin(90+2*72)}){$W \triangleright ((X \odot Y) \triangleright Z)$};
	\node(D) at ({\RADIUS*cos(90+3*72)}, {\RADIUS*sin(90+3*72)}){$W \triangleright (X \triangleright (Y \triangleright Z))$};
	\node(E) at ({\RADIUS*cos(90+4*72)}, {\RADIUS*sin(90+4*72)}){$(W \odot X) \triangleright (Y \triangleright Z)$};
	\draw [->] (A) -- node[above right]{$t$} node[below left]{$\cong$} (E);
	\draw [->] (A) -- node[above left]{$(a^{-1})^*$} node[below right]{$\cong$} (B);
	\draw [->] (B) -- node[below left]{$t$} node[above right]{$\cong$} (C);
	\draw [->] (C) -- node[below]{$t_*$} node[above]{$\cong$} (D);
	\draw [->] (E) -- node[below right]{$t$} node[above left]{$\cong$} (D);
\end{tikzpicture}
\]
\[
\begin{tikzpicture}[xscale=1.8]
	\newcommand{\RADIUS}{2}
	\node(A) at ({\RADIUS*cos(90+0*72)}, {\RADIUS*sin(90+0*72)}){$((W \triangleleft X) \triangleleft Y) \triangleleft Z$};
	\node(B) at ({\RADIUS*cos(90+1*72)}, {\RADIUS*sin(90+1*72)}){$(W \triangleleft (X \odot Y)) \triangleleft Z$};
	\node(C) at ({\RADIUS*cos(90+2*72)}, {\RADIUS*sin(90+2*72)}){$W \triangleleft ((X \odot Y) \odot Z)$};
	\node(D) at ({\RADIUS*cos(90+3*72)}, {\RADIUS*sin(90+3*72)}){$W \triangleleft (X \odot (Y \odot Z))$};
	\node(E) at ({\RADIUS*cos(90+4*72)}, {\RADIUS*sin(90+4*72)}){$(W \triangleleft X) \triangleleft (Y \odot Z)$};
	\draw [->] (A) -- node[above right]{$t^{-1}$} node[below left]{$\cong$} (E);
	\draw [->] (A) -- node[above left]{$t^{-1}_*$} node[below right]{$\cong$} (B);
	\draw [->] (B) -- node[below left]{$t^{-1}$} node[above right]{$\cong$} (C);
	\draw [->] (C) -- node[below]{$(a^{-1})^*$} node[above]{$\cong$} (D);
	\draw [->] (E) -- node[below right]{$t^{-1}$} node[above left]{$\cong$} (D);
\end{tikzpicture}
\]
\end{lem}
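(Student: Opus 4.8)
The plan is to reduce each pentagon to a single instance of the associativity pentagon axiom of $\mathscr{B}$, using the Yoneda lemma and the defining adjunctions of the closed structure. I would give the argument for the first diagram; the second is its mirror image under interchanging $\triangleleft$ with $\triangleright$ (equivalently, it is the first diagram applied to the opposite bicategory obtained by reversing 1-cells), so the same proof applies with $t^{-1}$ in place of $t$.

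First I would invoke the Yoneda lemma in the hom-category $\mathscr{B}(S,T)$ in which the five 1-cells of the pentagon live: it suffices to show that the two composite paths from $((W\odot X)\odot Y)\triangleright Z$ to $W\triangleright(X\triangleright(Y\triangleright Z))$ induce the same natural transformation between the representable functors they define. Applying the adjunctions $(-\odot N)\dashv(N\triangleright-)$ repeatedly, I would identify, naturally in the variable 1-cell $V$,
\[
	\mathscr{B}\bigl(V,\ ((W\odot X)\odot Y)\triangleright Z\bigr)\ \cong\ \mathscr{B}\bigl(V\odot\bigl((W\odot X)\odot Y\bigr),\ Z\bigr)
\]
and
\[
	\mathscr{B}\bigl(V,\ W\triangleright(X\triangleright(Y\triangleright Z))\bigr)\ \cong\ \mathscr{B}\bigl(\bigl((V\odot W)\odot X\bigr)\odot Y,\ Z\bigr),
\]
together with the analogous identifications for the three intermediate vertices.

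The heart of the proof is the observation that, under these identifications, post-composition with each edge of the pentagon corresponds to precomposition with an explicit (possibly whiskered) associator of $\mathscr{B}$. For an edge labelled $t$ this is read off from the definition of $t\colon(M\odot N)\triangleright P\to M\triangleright(N\triangleright P)$ as the double transpose of $\bigl(((M\odot N)\triangleright P)\odot M\bigr)\odot N\xrightarrow{\ev}P$: unwinding the two transposes and using naturality of $\ev$ and of $a$ shows that post-composing with $t$ becomes precomposition with $a\colon(V\odot M)\odot N\to V\odot(M\odot N)$. For the edge $(a^{-1})^*$, which is $a^{-1}$ run through the contravariant functor $-\triangleright Z$, it becomes precomposition with $\id_V\odot a^{-1}$; for the whiskered edge $t_*$, the image of $t$ under $W\triangleright-$, it becomes precomposition with an associator of the form $a\odot\id_Y$. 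Carrying this out for all five edges, both composite paths become precomposition with a string of four associators relating the reparenthesizations $\bigl((V\odot W)\odot X\bigr)\odot Y$ and $V\odot\bigl((W\odot X)\odot Y\bigr)$, and the statement that the two strings agree is precisely the pentagon axiom of $\mathscr{B}$ for the 1-cells $V,W,X,Y$. By Yoneda, the two composites of the original diagram are equal.

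The main obstacle is entirely bookkeeping: tracking exactly which associator --- with which whiskering and in which direction --- each of the five edges produces, so that the resulting five-term identity matches the pentagon axiom verbatim; the single reversed edge $(a^{-1})^*$ and the two whiskered edges deserve extra care. Everything else is routine naturality: of the adjunction isomorphisms, of $\ev$ and $\coev$, and of $a$, plus the unwinding of $t$ as a double transpose. A string-diagram proof in the calculus of Section~\ref{sec:duality_and_trace} is also possible, but the Yoneda reduction makes the role of coherence most transparent.
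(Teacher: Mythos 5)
Your proposal is correct, and at bottom it is the same mechanism the paper sketches in the text following the lemma: the paper says to transpose each composite three times against $W \odot X \odot Y$ and, using naturality of $\ev$ and the definitions of $a$ and $t$ as (iterated) transposes, check that both sides collapse to the same evaluation map; your version replaces ``take transposes'' by ``compare induced maps on $\mathscr{B}(V,-)$ for a generic $V$,'' which is the adjunction/Yoneda form of the same computation. What your route genuinely adds is that it makes the appeal to coherence explicit: the paper suppresses associators for $\odot$, so the pentagon axiom enters only tacitly, whereas you exhibit the five edges as precompositions by associators and reduce the lemma verbatim to the pentagon identity for $V, W, X, Y$ --- a clarifying bookkeeping that also makes the mirror-image $\triangleleft$ case immediate. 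One correction to that bookkeeping: the edge that transports to the whiskered associator $a_{V,W,X} \odot \id_Y$ is the final edge $(W \odot X) \triangleright (Y \triangleright Z) \to W \triangleright (X \triangleright (Y \triangleright Z))$ (a plain $t$, whose last transpose is taken against $Y$), while the whiskered edge $t_* = W \triangleright t$ transports to the associator $a_{V \odot W, X, Y}$ with the composite $V \odot W$ in its first slot. With the attributions fixed, the left path gives precomposition with $(\id_V \odot a^{-1}_{W,X,Y}) \circ a_{V,W,X\odot Y} \circ a_{V\odot W,X,Y}$ and the right path with $a_{V,W\odot X,Y} \circ (a_{V,W,X} \odot \id_Y)$, whose equality is exactly the pentagon axiom, so the slip does not affect the validity of the argument.
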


The reader is warned that the two diagrams above will appear as squares rather than pentagons when we make use of them later on, since we typically suppress the associator for $\odot$. To verify that diagrams like these commute, the easiest approach is usually to take transposes until no hom-objects remain in the terminal object of the diagram. For example, each side of the first pentagon above is thrice transposed by tensoring with the identity map on $W \odot X \odot Y$ and then composing with
\[
	(W \triangleright (X \triangleright (Y \triangleright Z))) \odot W \odot X \odot Y \xrightarrow{\ev \odot 1^2} (X \triangleright (Y \triangleright Z)) \odot X \odot Y \xrightarrow{\ev \odot 1} (Y \triangleright Z) \odot Y \xrightarrow{\ev} Z.
\]
Making use of naturality of $\ev$ and the definitions of $a$ and $t$ in terms of their transposes, one simplifies both sides of the pentagon (after transposing, that is) to the same map.

\begin{lem}
\label{lem:pentagon_a_and_t_2}
In a closed bicategory, the following diagrams commute for any 1-cells $W, X, Y, Z$ for which the diagrams make sense:
\[
\begin{tikzpicture}[xscale=1.8]
	\newcommand{\RADIUS}{2}
	\node(A) at ({\RADIUS*cos(90+0*72)}, {\RADIUS*sin(90+0*72)}){$((W \odot X) \triangleright Y) \triangleleft Z$};
	\node(B) at ({\RADIUS*cos(90+1*72)}, {\RADIUS*sin(90+1*72)}){$(W \triangleright (X \triangleright Y)) \triangleleft Z$};
	\node(C) at ({\RADIUS*cos(90+2*72)}, {\RADIUS*sin(90+2*72)}){$W \triangleright ((X \triangleright Y) \triangleleft Z)$};
	\node(D) at ({\RADIUS*cos(90+3*72)}, {\RADIUS*sin(90+3*72)}){$W \triangleright (X \triangleright (Y \triangleleft Z))$};
	\node(E) at ({\RADIUS*cos(90+4*72)}, {\RADIUS*sin(90+4*72)}){$(W \odot X) \triangleright (Y \triangleleft Z)$};
	\draw [->] (A) -- node[above right]{$a$} node[below left]{$\cong$} (E);
	\draw [->] (A) -- node[above left]{$t_*$} node[below right]{$\cong$} (B);
	\draw [->] (B) -- node[below left]{$a$} node[above right]{$\cong$} (C);
	\draw [->] (C) -- node[below]{$a_*$} node[above]{$\cong$} (D);
	\draw [->] (E) -- node[below right]{$t$} node[above left]{$\cong$} (D);
\end{tikzpicture}
\]
\[
\begin{tikzpicture}[xscale=1.8]
	\newcommand{\RADIUS}{2}
	\node(A) at ({\RADIUS*cos(90+0*72)}, {\RADIUS*sin(90+0*72)}){$((W \triangleright X) \triangleleft Y) \triangleleft Z)$};
	\node(B) at ({\RADIUS*cos(90+1*72)}, {\RADIUS*sin(90+1*72)}){$(W \triangleright (X \triangleleft Y)) \triangleleft Z)$};
	\node(C) at ({\RADIUS*cos(90+2*72)}, {\RADIUS*sin(90+2*72)}){$W \triangleright ((X \triangleleft Y) \triangleleft Z)$};
	\node(D) at ({\RADIUS*cos(90+3*72)}, {\RADIUS*sin(90+3*72)}){$W \triangleright (X \triangleleft (Y \odot Z))$};
	\node(E) at ({\RADIUS*cos(90+4*72)}, {\RADIUS*sin(90+4*72)}){$(W \triangleright X) \triangleleft (Y \odot Z)$};
	\draw [->] (A) -- node[above right]{$t^{-1}$} node[below left]{$\cong$} (E);
	\draw [->] (A) -- node[above left]{$a_*$} node[below right]{$\cong$} (B);
	\draw [->] (B) -- node[below left]{$a$} node[above right]{$\cong$} (C);
	\draw [->] (C) -- node[below]{$t^{-1}_*$} node[above]{$\cong$} (D);
	\draw [->] (E) -- node[below right]{$a$} node[above left]{$\cong$} (D);
\end{tikzpicture}
\]
\end{lem}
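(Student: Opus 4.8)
The plan is to follow the method explained after Lemma~\ref{lem:pentagon_a_and_t_1}: transpose both legs of each diagram until no internal hom-object remains in the terminal vertex, and then check that the two transposed 2-cells agree. Since transposition across the relevant adjunctions is a bijection on 2-cells, equality of the transposes forces equality of the two legs.

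For the first diagram, write $A := ((W \odot X) \triangleright Y) \triangleleft Z$ and $D := W \triangleright (X \triangleright (Y \triangleleft Z))$ for its initial and terminal vertices. Composing the adjunction bijections for $(- \odot W) \dashv (W \triangleright -)$, $(- \odot X) \dashv (X \triangleright -)$, and $(Z \odot -) \dashv (- \triangleleft Z)$ yields a bijection between 2-cells $A \to D$ and 2-cells $Z \odot A \odot W \odot X \to Y$ (associativity isomorphisms for $\odot$ suppressed, as usual). I expect both legs of the pentagon to transpose to the same 2-cell, namely the double evaluation
\[
	Z \odot \bigl( ((W \odot X) \triangleright Y) \triangleleft Z \bigr) \odot W \odot X
	\xrightarrow{\ev \odot 1}
	\bigl( (W \odot X) \triangleright Y \bigr) \odot W \odot X
	\xrightarrow{\ev}
	Y,
\]
in which the first $\ev$ is the evaluation for $\triangleleft$ and the second is the evaluation for $\triangleright$.

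To verify this I would transpose each leg one adjunction at a time, using only: the descriptions of $t$ and $a$ as iterated transposes of composites of evaluation maps (that of $t$ is recorded just before Lemma~\ref{lem:pentagon_a_and_t_1}, and that of $a$ is entirely analogous); naturality of $\ev$, which is what lets a transposed 2-cell be slid past an $\ev$; and the fact that applying an internal hom functor such as $W \triangleright -$ or $- \triangleleft Z$ to a 2-cell becomes, after transposing, whiskering by that 2-cell. Each leg should then collapse to the double evaluation above. This mechanical simplification is the only real content of the argument, and it is also the main obstacle: the two pentagons interleave $\triangleleft$-transposes with $\triangleright$-transposes, and the decorated maps $t_*$, $a_*$, $t^{-1}_*$ are internal homs of 2-cells, so their behavior under transposition must be tracked carefully through naturality of the appropriate counits.

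The second diagram would be handled identically: its initial and terminal vertices are $((W \triangleright X) \triangleleft Y) \triangleleft Z$ and $W \triangleright (X \triangleleft (Y \odot Z))$, and composing the adjunctions for $(- \odot W) \dashv (W \triangleright -)$, $(Y \odot -) \dashv (- \triangleleft Y)$, and $(Z \odot -) \dashv (- \triangleleft Z)$ should show that both legs transpose to the triple evaluation
\[
	Y \odot Z \odot \bigl( ((W \triangleright X) \triangleleft Y) \triangleleft Z \bigr) \odot W \longrightarrow X.
\]
Alternatively, one can deduce the second diagram from the first by passing to the opposite bicategory $\mathscr{B}^{\op}$, in which $\odot$ is reversed, $\triangleleft$ and $\triangleright$ are interchanged, and $a$ and $t$ become $a^{-1}$ and $t^{-1}$; after relabelling $W, X, Y, Z$ appropriately, the first pentagon turns into the second.
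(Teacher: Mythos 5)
Your proposal is correct and matches the paper's own (sketched) argument: the paper proves these pentagons exactly by transposing both legs until no hom-objects remain in the terminal vertex and simplifying, via naturality of $\ev$ and the definitions of $a$ and $t$ as transposes, to the common evaluation composite you identify. The opposite-bicategory remark is a reasonable bonus shortcut for the second pentagon, but the main transposition argument is precisely the intended proof.
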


Just as there are natural isomorphisms $(M \triangleright N) \triangleleft P \cong M \triangleright (N \triangleleft P)$ analogous to the associators $(M \odot N) \odot P \cong M \odot (N \odot P)$, there are natural isomorphisms $U_S \triangleright M \cong M \cong M \triangleleft U_R$ analogous to the unitors $U_R \odot M \cong M \cong M \odot U_S$.

\begin{lem}
\label{lem:hom_unit_isomorphisms}
Given a 1-cell $M : R \pto S$, the transpose $\overline{l} : M \to M \triangleleft U_R$ of $l : U_R \odot M \xrightarrow{\cong} M$ is an isomorphism, as is the transpose $\overline{r} : M \to U_S \triangleright M$ of $r : M \odot U_S \xrightarrow{\cong} M$. Moreover, these isomorphisms are natural in $M$.
\end{lem}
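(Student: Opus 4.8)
The plan is to produce an explicit two-sided inverse for each of $\overline{l}$ and $\overline{r}$, built from the evaluation maps of the closed structure together with the ordinary unitors, and then to read off naturality from naturality of the unitors and of transposition. I will spell out the case of $\overline{l} : M \to M \triangleleft U_R$; the case of $\overline{r}$ is the mirror image. Specializing the first natural isomorphism of (\ref{eq:closed_bicat_tensor_hom}) to the 1-cell $U_R$ exhibits $-\triangleleft U_R$ as a right adjoint of $U_R \odot -$, both regarded as endofunctors of $\mathscr{B}(R,S)$ (note $M \triangleleft U_R$ again lies in $\mathscr{B}(R,S)$, so $U_R$ is still a legitimate left unit for it), and the transpose $\overline{l}$ of $l : U_R \odot M \xrightarrow{\cong} M$ is characterised by the identity $\ev \circ (\id_{U_R} \odot \overline{l}) = l$.

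For the inverse I take the composite $\widetilde{\ev} : M \triangleleft U_R \xrightarrow{l^{-1}} U_R \odot (M \triangleleft U_R) \xrightarrow{\ev} M$, where the unitor here is that of the 1-cell $M \triangleleft U_R$. To see $\widetilde{\ev} \circ \overline{l} = \id_M$, rewrite $l^{-1} \circ \overline{l}$ by naturality of the unitor (applied to the 2-cell $\overline{l}$) as $(\id_{U_R} \odot \overline{l}) \circ l^{-1}$, so $\widetilde{\ev} \circ \overline{l} = \ev \circ (\id_{U_R} \odot \overline{l}) \circ l^{-1} = l \circ l^{-1} = \id_M$. To see $\overline{l} \circ \widetilde{\ev} = \id_{M \triangleleft U_R}$, compare transposes across the adjunction $U_R \odot - \dashv -\triangleleft U_R$: the transpose of the identity is $\ev$ by definition, while the transpose of $\overline{l} \circ \widetilde{\ev}$ is $\ev \circ (\id \odot \overline{l}) \circ (\id \odot \widetilde{\ev}) = l \circ (\id \odot \widetilde{\ev})$, and a second use of naturality of the unitor (applied to $\widetilde{\ev}$) together with $\widetilde{\ev} = \ev \circ l^{-1}$ collapses this to $\ev$ as well; since transposition is a bijection, the two 2-cells agree. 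Naturality of $\overline{l}$ in $M$ is then immediate from naturality of $l$ and of transposition --- or, more conceptually, $\overline{l}$ is the unit of the adjunction $\id \dashv (-\triangleleft U_R)$ obtained by transporting $U_R \odot - \dashv -\triangleleft U_R$ along $l$, hence the canonical comparison isomorphism between the two right adjoints $\id$ and $-\triangleleft U_R$ of $\id$. The same argument, now with the second isomorphism of (\ref{eq:closed_bicat_tensor_hom}), the 1-cell $U_S$, the evaluation $(U_S \triangleright M) \odot U_S \xrightarrow{\ev} M$, and the right unitor $r$, handles $\overline{r} : M \to U_S \triangleright M$.

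The only genuinely delicate part is the bookkeeping of variances and of source/target 0-cells, so that every instance of $\odot$, $\triangleleft$, $\triangleright$, $\ev$, $l$, and $r$ that appears is actually defined and the composites type-check; once this is pinned down there is no real content beyond two naturality squares for the unitor and the defining identity $\ev \circ (\id \odot \overline{l}) = l$. In particular no triangle identities for dual pairs and no use of a symmetry are needed: this is purely a statement about the closed structure.
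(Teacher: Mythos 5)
Your proposal is correct and takes essentially the same route as the paper: the inverse you construct, $M \triangleleft U_R \xrightarrow{l^{-1}} U_R \odot (M \triangleleft U_R) \xrightarrow{\ev} M$ (and its mirror for $\overline{r}$), is exactly the one the paper exhibits. The paper leaves the two-sided-inverse check and naturality to the reader, and your verification via $\ev \circ (\id \odot \overline{l}) = l$ and naturality of the unitor is a correct filling-in of those details.
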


\begin{proof}
The inverse of $\overline{l}$ is
\[\begin{tikzcd}
	M \triangleleft U_R
		\arrow[r, "l^{-1}", "\cong"']
	& U_R \odot (M \triangleleft U_R)
		\arrow[r, "\mathrm{ev}"]
	& M.
\end{tikzcd}\]
The inverse of $\overline{r}$ is similar, and naturality is straightforward to check.
\end{proof}

These maps $\overline{l}$ and $\overline{r}$ satisfy properties analogous to the bicategory axiom relating the associator and unitors.

\begin{lem}
\label{lem:hom_unit_iso_compatibility}
The following diagrams commute whenever they make sense:
\begin{enumerate}[ref={\thecor.\arabic*}]
\item \label{lem:hom_unit_iso_compatibility_with_t}
\[\begin{tikzcd}
	X \triangleright Y
		\arrow[r, "r^*", "\cong"']
		\arrow[dr, "\overline{r}_*"', "\cong"]
	& (X \odot U_S) \triangleright Y
		\arrow[d, "t", "\cong"']
	\\
	& X \triangleright (U_S \triangleright Y)
\end{tikzcd} \qquad\qquad \begin{tikzcd}
	Y \triangleleft X
		\arrow[r, "l^*", "\cong"']
		\arrow[dr, "\overline{l}_*"', "\cong"]
	& Y \triangleleft (U_R \odot X)
		\arrow[d, "t", "\cong"']
	\\
	& (Y \triangleleft U_R) \triangleleft X
\end{tikzcd}\]
\item \label{lem:hom_unit_iso_compatibility_with_t_2}
\[\begin{tikzcd}
	X \triangleright Y
		\arrow[r, "l^*", "\cong"']
		\arrow[dr, "\overline{r}"', "\cong"]
	& (U_R \odot X) \triangleright Y
		\arrow[d, "t", "\cong"']
	\\
	& U_R \triangleright (X \triangleright Y)
\end{tikzcd} \qquad\qquad \begin{tikzcd}
	Y \triangleleft X
		\arrow[r, "r^*", "\cong"']
		\arrow[dr, "\overline{l}"', "\cong"]
	& Y \triangleleft (X \odot U_S)
		\arrow[d, "t", "\cong"']
	\\
	& (Y \triangleleft X) \triangleleft U_S
\end{tikzcd}\]
\item \label{lem:hom_unit_iso_compatibility_with_a}
\[\begin{tikzcd}
	X \triangleright Y
		\arrow[r, "\overline{l}_*", "\cong"']
		\arrow[dr, "\overline{l}"', "\cong"]
	& X \triangleright (Y \triangleleft U)
		\arrow[d, "a^{-1}", "\cong"']
	\\
	& (X \triangleright Y) \triangleleft U
\end{tikzcd} \qquad \qquad \begin{tikzcd}
	Y \triangleleft X
		\arrow[r, "\overline{r}_*", "\cong"']
		\arrow[dr, "\overline{r}"', "\cong"]
	& (U \triangleright Y) \triangleleft X
		\arrow[d, "a", "\cong"']
	\\
	& U \triangleright (Y \triangleleft X)
\end{tikzcd}\]
\end{enumerate}
\end{lem}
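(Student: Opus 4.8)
The plan is to prove all six squares by the transpose method advertised just after Lemma~\ref{lem:pentagon_a_and_t_1}: reduce each square to an identity of 2-cells whose common target is a 1-cell with \emph{no} internal hom-objects, by iterating the adjunction bijections of (\ref{eq:closed_bicat_tensor_hom}). Recall that a 2-cell into an iterated hom-object such as $X \triangleright (U_S \triangleright Y)$ is the same datum as a 2-cell $(\,\cdot\,) \odot X \odot U_S \to Y$, obtained by whiskering with $X$ and then with $U_S$ and composing with the relevant evaluations; since these bijections are natural, two parallel 2-cells into an iterated hom-object agree if and only if their iterated transposes agree. So for each diagram I would first rewrite every arrow in terms of the defining formulas from Section~\ref{sec:closed_bicategories}: $t$ and $a$ are each the (double) transpose of a composite of evaluation maps, $\overline{l}$ and $\overline{r}$ are the transposes of the unitors $l$ and $r$ of $\odot$, and $l^*$, $r^*$, $\overline{l}_*$, $\overline{r}_*$ are these unitors/their transposes whiskered by the hom-functors. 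Then transpose both legs of the square all the way down and compare.

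After transposing, both legs are expressed entirely in terms of $\ev$, $\coev$, the unitors $l, r$ of $\odot$, and (only in part (3)) the associator of $\odot$, which enters through the definition of the hom-associator $a$. The tools needed to collapse the two legs to the same map are: naturality and extranaturality of $\ev$ and $\coev$ (as recorded after Definition~\ref{defn:closed_bicategory}), the triangle identities of the tensor--hom adjunctions (whiskering $\coev$ and then evaluating is the identity), and the coherence axioms for $\odot$ in a bicategory (the pentagon and the triangle relating $a$, $l$, $r$). In each case the two transposed legs become equal after a single application of one of these; for instance in part~(1), left-hand diagram, both legs transpose to the composite $(X \triangleright Y) \odot X \odot U_S \xrightarrow{\ev \odot \id} Y \odot U_S \xrightarrow{r} Y$, while part~(3) is the one genuinely mixed case, where one must transpose along both a $\triangleright$- and a $\triangleleft$-adjunction and use the definition of $a$ together with the compatibility of the two sides' unit data.

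For economy, I would exploit the evident left/right symmetry: the two diagrams within each of (1), (2), (3) are mirror images of one another, interchanged by passing to the co-bicategory $\mathscr{B}^{\mathrm{co}}$ (reverse the direction of $\odot$, keep the 2-cells), under which $\triangleright$ and $\triangleleft$ swap, the left and right unitors of $\odot$ swap, $\overline{l}$ and $\overline{r}$ swap, and $t, a$ become $t^{-1}, a^{-1}$ appropriately. Hence it suffices to verify one diagram from each numbered item; the other follows formally. (Equivalently, one repeats the transpose computation on the other side verbatim.)

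The only real obstacle is bookkeeping rather than mathematics: one must keep careful track of which of the two hom-adjunctions each transpose is taken along, and — crucially — reinstate the unit isomorphisms of $\odot$ that the paper systematically suppresses. The warning after Lemma~\ref{lem:pentagon_a_and_t_1} that such diagrams ``appear as squares rather than pentagons'' is exactly the hazard here: the missing fifth edge of the pentagon is an associator or unitor for $\odot$ concealed by our conventions, and forgetting it is the natural way to get a diagram that ``doesn't commute.'' Once the transposes are written out honestly, each of the six identities drops out of naturality of $\ev$/$\coev$ plus the classical coherence theorem for bicategories, with no residual computation.
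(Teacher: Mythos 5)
Your proposal is correct and takes essentially the paper's intended route: the paper states this lemma without an explicit proof, but the method it prescribes immediately after Lemma~\ref{lem:pentagon_a_and_t_1} --- transpose both legs along the tensor--hom adjunctions until no hom-objects remain in the target, then finish with (extra)naturality of $\ev$ and bicategory coherence for the unitors and associator --- is exactly what you carry out, including the left/right duality reducing the six squares to three. (Only cosmetic quibbles: in part (1) one leg transposes to $r \circ (\ev \odot 1)$ and the other to $\ev \circ (1 \odot r)$, equal by unitor naturality rather than literally the same composite, and what you call $\mathscr{B}^{\mathrm{co}}$ is the 1-cell--reversed bicategory, usually written $\mathscr{B}^{\mathrm{op}}$.)
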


\begin{lem}
\label{lem:hom_unit_iso_equality}
For any 1-cell $M : R \pto S$, the maps $\overline{r}, \overline{r}_* : U_S \triangleright M \to U_S \triangleright (U_S \triangleright M)$ are equal, as are the maps $\overline{l}, \overline{l}_* : M \triangleleft U_R \to (M \triangleleft U_R) \triangleleft U_R$.
\end{lem}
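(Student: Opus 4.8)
The plan is to reduce this to the general observation that a natural transformation valued in isomorphisms ``agrees with its own image.'' Fix the 0-cells $R$ and $S$ and regard the maps $\overline{r}_N \colon N \to U_S \triangleright N$, as $N$ ranges over $\mathscr{B}(R,S)$, as the components of a natural transformation $\overline{r}$ from the identity functor on $\mathscr{B}(R,S)$ to the endofunctor $U_S \triangleright -$ (this functor is covariant since $- \triangleright -$ is covariant in its second variable, and it genuinely restricts to an endofunctor of $\mathscr{B}(R,S)$ because $U_S$ has source and target $S$). By Lemma~\ref{lem:hom_unit_isomorphisms} every component $\overline{r}_N$ is an isomorphism and the transformation is natural in $N$. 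In this language the two maps in the statement are the component $\overline{r}_{U_S \triangleright M}$ at the object $U_S \triangleright M$ (the map written $\overline{r}$) and the image $U_S \triangleright \overline{r}_M$ of the morphism $\overline{r}_M$ under the functor $U_S \triangleright -$ (the map written $\overline{r}_*$).

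The key step is to apply the naturality square of $\overline{r}$ not to an arbitrary morphism but to $\overline{r}_M \colon M \to U_S \triangleright M$ itself. Naturality then gives $(U_S \triangleright \overline{r}_M) \circ \overline{r}_M = \overline{r}_{U_S \triangleright M} \circ \overline{r}_M$, so the two maps become equal after precomposition with $\overline{r}_M$; since $\overline{r}_M$ is an isomorphism we may cancel it to obtain $U_S \triangleright \overline{r}_M = \overline{r}_{U_S \triangleright M}$, that is, $\overline{r}_* = \overline{r}$. The statement for $\overline{l}$ is proved by the identical argument, with the covariant endofunctor $- \triangleleft U_R$ of $\mathscr{B}(R,S)$ in place of $U_S \triangleright -$ and $\overline{l}$ in place of $\overline{r}$.

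I do not expect a real obstacle here: once the two maps are correctly matched with ``the component of $\overline{r}$ at $U_S \triangleright M$'' and ``$U_S \triangleright -$ applied to the morphism $\overline{r}_M$,'' the conclusion is immediate from naturality together with invertibility. The only points requiring care are the bookkeeping of which unit 1-cell sits where (so that $U_S \triangleright -$ really restricts to an endofunctor of the fixed hom-category containing both $M$ and $\overline{r}_M$) and confirming that the subscript $*$ in the statement denotes the covariant action of these hom-functors on 2-cells, which it does since $- \triangleright -$ and $- \triangleleft -$ are covariant in their first arguments. If one prefers a hands-on check, one can instead transpose both maps twice across the tensor-hom adjunction until no hom-object remains in the codomain and compare the resulting maps $((U_S \triangleright M) \odot U_S) \odot U_S \to M$ using naturality of $\mathrm{ev}$ and of the unitor $r$ together with the coherence identity $l_{U_S} = r_{U_S}$; but the naturality argument above is shorter and makes transparent why the equality holds.
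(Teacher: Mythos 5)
Your argument is correct, and it is a genuinely different (and slicker) route than the one the paper implicitly intends: the paper states this lemma without proof, and its surrounding discussion suggests the routine method of transposing both maps across the tensor--hom adjunction until no hom-object remains in the codomain and comparing the results via naturality of $\ev$ and the unitors --- exactly the ``hands-on check'' you mention at the end. Your proof instead runs the classical Eckmann--Hilton/Kelly-style observation: Lemma~\ref{lem:hom_unit_isomorphisms} supplies a natural transformation $\overline{r} : \mathrm{Id} \Rightarrow U_S \triangleright -$ on $\mathscr{B}(R,S)$ whose components are isomorphisms; instantiating its naturality square at the morphism $\overline{r}_M$ itself gives $\overline{r}_{U_S \triangleright M} \circ \overline{r}_M = (\overline{r}_M)_* \circ \overline{r}_M$, and cancelling the isomorphism $\overline{r}_M$ yields $\overline{r} = \overline{r}_*$; likewise for $\overline{l}$ with the endofunctor $- \triangleleft U_R$. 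You correctly match $\overline{r}_*$ with the covariant action of $U_S \triangleright -$ on the 2-cell $\overline{r}_M$ and $\overline{r}$ with the component at $U_S \triangleright M$, which is exactly the paper's usage. The gain of your approach is that it uses only the naturality and invertibility already recorded in Lemma~\ref{lem:hom_unit_isomorphisms}, with no unwinding of transposes; the transpose computation, by contrast, is more mechanical but is the technique the paper leans on throughout for such coherence checks. One small correction: $- \triangleright -$ is \emph{contravariant} in its first (source) argument and covariant in its second (target) argument, so your parenthetical justification ``covariant in their first arguments'' is stated wrongly for $\triangleright$; what your proof actually uses is covariance in the target variable, i.e.\ that $U_S \triangleright -$ and $- \triangleleft U_R$ are covariant endofunctors, which is true and is all that is needed, so the slip does not affect the argument.
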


\subsection{String diagrams for closed bicategories}

We represent hom-objects in string diagrams as shown in Figure~\ref{fig:hom_objects}. This is similar to how we join strings $M$ and $N$ together to represent their composite $M \odot N$ in Figure~\ref{fig:string_diagrams_bicat}, but with three key differences: (1) the 0-cell regions on either side of the source 1-cell (i.e.~the $M$ or $N$ in Figure~\ref{fig:hom_objects}) are interchanged, (2) the 1-cells appear in reverse order (e.g.~for $N \triangleright P$, the $P$ string appears to the left of the $N$ string), and (3) the string of the target 1-cell (i.e.~$P$) is drawn thicker.

\begin{rmk}
\label{rmk:drawing_homs}
The first two of these observations are explained by the fact that $N \triangleright P \cong P \odot N^*$ if $N$ is right dualizable (Corollary~\ref{cor:hom_from_dualizable}); thus even if $N$ does not have a right dual $N^*$, we draw the string diagram for $N \triangleright P$ nearly the same way that we would draw $P \odot N^*$ if it did.
\end{rmk}

\begin{figure}[h]
	\centering
	\begin{tabular}{m{13mm}m{20mm}cm{13mm}m{20mm}cm{13mm}m{20mm}}
	\begin{center}1-cell \\\ \\ $R \xrightarrow{M} S$ \end{center} &
	\begin{center}\begin{tikzpicture}
		\fill[color1fill] (0,0) rectangle (1,2);
		\fill[color2fill] (1,0) rectangle (2,2);
		\node[anchor=north west,color1dark,re] at (0,2) {$R$};
		\node[anchor=north east,color2dark,re] at (2,2) {$S$};
		\draw (1,2) -- node[ed] {$M$} +(0,-2);
	\end{tikzpicture}\end{center}
	&&
	\begin{center}1-cell \\\ \\ $R \xrightarrow{P} T$ \end{center} &
	\begin{center}\begin{tikzpicture}
		\fill[color1fill] (0,0) rectangle (1,2);
		\fill[color3fill] (1,0) rectangle (2,2);
		\node[anchor=north west,color1dark,re] at (0,2) {$R$};
		\node[anchor=north east,color3dark,re] at (2,2) {$T$};
		\draw (1,2) -- node[ed] {$P$} +(0,-2);
	\end{tikzpicture}\end{center}
	&&
	\begin{center}1-cell \\\ \\ $S \xrightarrow{N} T$ \end{center} &
	\begin{center}\begin{tikzpicture}
		\fill[color2fill] (0,0) rectangle (1,2);
		\fill[color3fill] (1,0) rectangle (2,2);
		\node[anchor=north west,color2dark,re] at (0,2) {$S$};
		\node[anchor=north east,color3dark,re] at (2,2) {$T$};
		\draw (1,2) -- node[ed] {$N$} +(0,-2);
	\end{tikzpicture}\end{center}
\end{tabular}
\begin{tabular}{m{20mm}m{30mm}cm{20mm}m{30mm}}
	\begin{center}Hom-object \\\ \\ $P \triangleleft M$ \end{center} &
	\begin{center}\begin{tikzpicture}
		\fill[color2fill] (0,0) rectangle (1,2);
		\fill[color1fill] (1,0) rectangle (2,2);
		\fill[color3fill] (2,0) rectangle (3,2);
		\node[anchor=north west,color2dark,re] at (0,2) {$S$};
		\node[anchor=north,color1dark,re] at (1.5,2) {$R$};
		\node[anchor=north east,color3dark,re] at (3,2) {$T$};
		\draw (1,2) -- node[ed,swap] {$M$} +(0,-2);
		\draw[ds] (2,2) -- node[ed] {$P$} +(0,-2);
	\end{tikzpicture}\end{center}
	&&
	\begin{center}Hom-object \\\ \\ $N \triangleright P$ \end{center} &
	\begin{center}\begin{tikzpicture}
		\fill[color1fill] (0,0) rectangle (1,2);
		\fill[color3fill] (1,0) rectangle (2,2);
		\fill[color2fill] (2,0) rectangle (3,2);
		\node[anchor=north west,color1dark,re] at (0,2) {$R$};
		\node[anchor=north,color3dark,re] at (1.5,2) {$T$};
		\node[anchor=north east,color2dark,re] at (3,2) {$S$};
		\draw[ds] (1,2) -- node[ed,swap] {$P$} +(0,-2);
		\draw (2,2) -- node[ed] {$N$} +(0,-2);
	\end{tikzpicture}\end{center}
\end{tabular}
	\caption{String diagrams for hom-objects}
	\label{fig:hom_objects}
\end{figure}
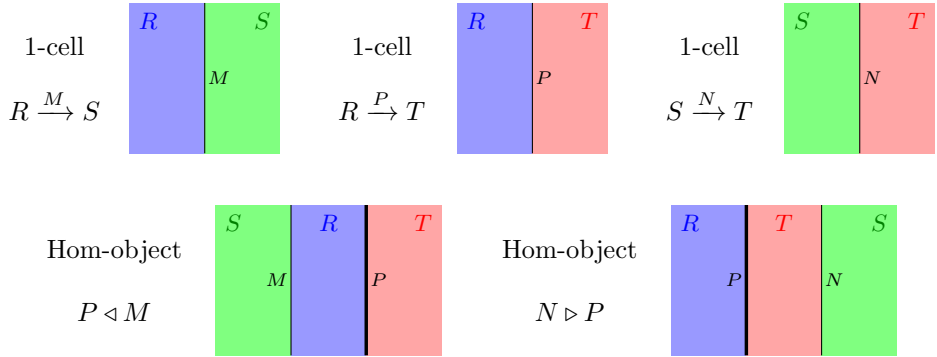

In general, a string diagram for a closed bicategory will have a single distinguished string, drawn thicker than the others, and all the other strings will have the 0-cell regions on their left and right swapped. Each string diagram can be interpreted as one of several different but isomorphic objects, just like in an ordinary bicategory, where, for example, a series of strings labeled $X$, $Y$, and $Z$ could represent either $(X \odot Y) \odot Z$ or $X \odot (Y \odot Z)$. Before describing how to interpret a general such string diagram, we will look at some examples with three and four strings.

Figure~\ref{fig:hom_3_objects} shows two examples with three strings (as a warning, the source and target 0-cells of $Y$ and $Z$ are not consistent between these two pictures). In the first, $Z$ is the distinguished string, indicating that $Z$ is the target of one or more hom-objects from the other 1-cells. This could mean either $(X \odot Y) \triangleright Z$ or $X \triangleright (Y \triangleright Z)$, which are isomorphic. In the second string diagram, $Y$ is the distinguished string, so we understand $Y$ to be the target of hom-objects from $X$ and $Z$; this could mean either of the isomorphic 1-cells $(X \triangleright Y) \triangleleft Z$ or $X \triangleright (Y \triangleleft Z)$. As in Figure~\ref{fig:hom_objects}, the 1-cells appear in the opposite order ($X, Y, Z$) to the order of the strings ($Z, Y, X$).

\begin{figure}[h]
	\centering
	\begin{tabular}{m{20mm}m{40mm}}
	\begin{center}$(X \odot Y) \triangleright Z$ \\ or \\ $X \triangleright (Y \triangleright Z)$ \end{center} &
	\begin{center}
		\begin{tikzpicture}
			\fill[color1fill] (0,0) rectangle (1,2);
			\fill[color2fill] (1,0) rectangle (2,2);
			\fill[color3fill] (2,0) rectangle (3,2);
			\fill[color4fill] (3,0) rectangle (4,2);
			\draw[ds] (1,2) -- node[ed] {$Z$} +(0,-2);
			\draw (2,2) -- node[ed] {$Y$} +(0,-2);
			\draw (3,2) -- node[ed] {$X$} +(0,-2);
		\end{tikzpicture}
	\end{center}
	\\ \\
	\begin{center}$(X \triangleright Y) \triangleleft Z$ \\ or \\ $X \triangleright (Y \triangleleft Z)$ \end{center} &
	\begin{center}
		\begin{tikzpicture}
			\fill[color1fill] (0,0) rectangle (1,2);
			\fill[color2fill] (1,0) rectangle (2,2);
			\fill[color3fill] (2,0) rectangle (3,2);
			\fill[color4fill] (3,0) rectangle (4,2);
			\draw (1,2) -- node[ed] {$Z$} +(0,-2);
			\draw[ds] (2,2) -- node[ed] {$Y$} +(0,-2);
			\draw (3,2) -- node[ed] {$X$} +(0,-2);
		\end{tikzpicture}
	\end{center}
\end{tabular}
	\caption{Closed bicategory string diagrams with three strings}
	\label{fig:hom_3_objects}
\end{figure}
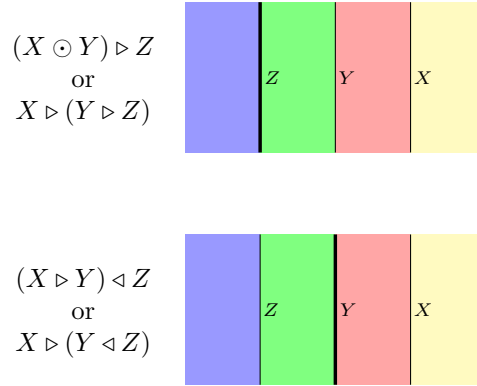

Figure~\ref{fig:hom_4_objects} shows two examples of diagrams with four strings (again, the source and target 0-cells of $Y$ and $Z$ are not consistent between the two pictures). With four strings, there are five different but isomorphic 1-cells that the diagram could represent. These two string diagrams are justified by Lemmas~\ref{lem:pentagon_a_and_t_1} and \ref{lem:pentagon_a_and_t_2}, respectively, which show that the five 1-cells are not just isomorphic but coherently isomorphic, i.e.~any two ways of reparenthesizing via the maps $a$ and $t$ are equal.

\begin{figure}[h]
	\centering
	\begin{tabular}{m{30mm}m{50mm}}
	\begin{center}
		$((W \odot X) \odot Y) \triangleright Z$, \\
		$(W \odot (X \odot Y)) \triangleright Z$, \\
		$W \triangleright ((X \odot Y) \triangleright Z)$, \\
		$W \triangleright (X \triangleright (Y \triangleright Z))$, \\
		$(W \odot X) \triangleright (Y \triangleright Z)$
	\end{center} &
	\begin{center}\begin{tikzpicture}
		\fill[color1fill] (0,0) rectangle (1,2);
		\fill[color2fill] (1,0) rectangle (2,2);
		\fill[color3fill] (2,0) rectangle (3,2);
		\fill[color4fill] (3,0) rectangle (4,2);
		\fill[color5fill] (4,0) rectangle (5,2);
		\draw[ds] (1,2) -- node[ed] {$Z$} +(0,-2);
		\draw (2,2) -- node[ed] {$Y$} +(0,-2);
		\draw (3,2) -- node[ed] {$X$} +(0,-2);
		\draw (4,2) -- node[ed] {$W$} +(0,-2);
	\end{tikzpicture}\end{center}
	\\ \\
	\begin{center}
		$((W \odot X) \triangleright Y) \triangleleft Z$, \\
		$(W \triangleright (X \triangleright Y)) \triangleleft Z$, \\
		$W \triangleright ((X \triangleright Y) \triangleleft Z)$, \\
		$W \triangleright (X \triangleright (Y \triangleleft Z))$, \\
		$(W \odot X) \triangleright (Y \triangleleft Z)$
	\end{center} &
	\begin{center}\begin{tikzpicture}
		\fill[color1fill] (0,0) rectangle (1,2);
		\fill[color2fill] (1,0) rectangle (2,2);
		\fill[color3fill] (2,0) rectangle (3,2);
		\fill[color4fill] (3,0) rectangle (4,2);
		\fill[color5fill] (4,0) rectangle (5,2);
		\draw (1,2) -- node[ed] {$Z$} +(0,-2);
		\draw[ds] (2,2) -- node[ed] {$Y$} +(0,-2);
		\draw (3,2) -- node[ed] {$X$} +(0,-2);
		\draw (4,2) -- node[ed] {$W$} +(0,-2);
	\end{tikzpicture}\end{center}
\end{tabular}
	\caption{Closed bicategory string diagrams with four strings}
	\label{fig:hom_4_objects}
\end{figure}
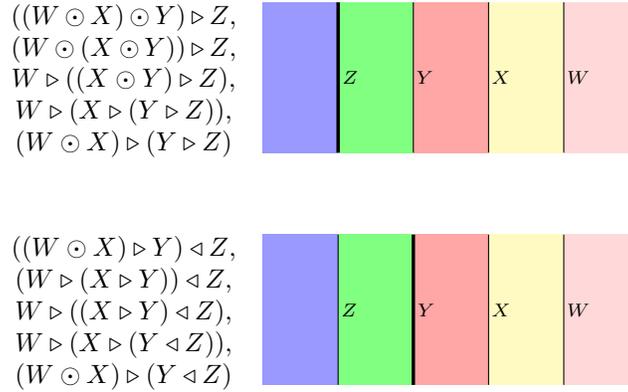

The general procedure for interpreting a diagram with $n$ strings, one of them distinguished, is:
\begin{enumerate}[label=\arabic*.]
	\item Write the 1-cells in the order opposite their appearance in the diagram.
	\item Choose any of the $\frac{1}{n} \binom{2n - 2}{n - 1}$ ways to parenthesize the $n$ 1-cells.
	\item In accordance with that parenthesization, place between each adjacent pair $A$ and $B$ of 1-cells (or parenthesized groups of 1-cells) the symbol
	\[\begin{cases}
		\triangleleft, &\text{if $A$ contains the distinguished 1-cell} \\
		\triangleright, &\text{if $B$ contains the distinguished 1-cell} \\
		\odot, &\text{if neither contains the distinguished 1-cell} \\
	\end{cases}\]
\end{enumerate}

For example, if $Y$ is the distinguished 1-cell, the parenthesization $((WX)Y)Z$ yields $((W \odot X) \triangleright Y) \triangleleft Z$. Our system of string diagrams is somewhat limiting; 1-cells like $(X \triangleright Y) \triangleright Z$ and $(X \triangleright Y) \odot X$ cannot be represented with string diagrams, for example, nor can the coevaluation map $X \to Y \triangleright (X \odot Y)$ or the evaluation map $(Y \triangleright Z) \odot Y \to Z$.

We have two reasons for adopting this system of string diagrams, despite its limitations. One is that the subset of 1-cells made available by these string diagrams is sufficient to produce cotraces and to describe their basic properties in Section~\ref{sec:cotrace_basic_properties}. The other reason is that this system of string diagrams is very similar to that of \cite{Shadows_and_traces}. The only essential difference is a choice of distinguished string; while there may be more ways to combine 1-cells ($\triangleleft$, $\triangleright$, and $\odot$), there are still the same Catalan number's worth of ways to interpret each horizontal slice of a string diagram. While we do not offer a rigorous justification of string diagram manipulations like Ponto and Shulman do for their string diagrams in \cite[Appendix A]{Shadows_and_traces}, the similarity of our string diagrams to theirs suggests that ours could be formalized via a straightforward adaptation of their work. It also gives us confidence that manipulations of our string diagrams, though we do not regard them as formal proofs, do generally lead to true statements about bicategorical cotraces.

As an aid toward proving more complex results such as Theorem~\ref{thm:interplay}, which contain objects inaccessible to this system of string diagrams, we used a variation of the string diagrams of \cite[Section 2.6]{BaezStay} which represent hom-objects with ``clasps.'' However, these diagrams quickly become unwieldy when they contain several nested hom-objects, so we do not reproduce them here. Once again, we view a deformation of these string diagrams as a guide toward a commutative diagram rather than a proof itself.

String diagrams can depict functoriality of $- \triangleright -$ and $-\triangleleft -$, which are covariant in the target and contravariant in the source. For example, a 2-cell $f : P \to P'$ induces a 2-cell $f_* : N \triangleright P \to N \triangleright P'$, and $g : N \to N'$ induces $g^* : N' \triangleright P \to N \triangleright P$. Figure~\ref{fig:hom_functoriality} shows how these are represented as string diagrams.

\begin{figure}[h]
	\centering
	\begin{tabular}{m{17mm}m{30mm}cm{17mm}m{30mm}}
	\begin{center}2-cell\\$\xymatrix{R\rtwocell^P_{P'}{f} & T}$\end{center} &
	\begin{center}\begin{tikzpicture}
		\fill[color1fill] (0,0) rectangle (1,2);
		\fill[color3fill] (1,0) rectangle (2,2);
		\node[anchor=north west,color1dark,re] at (0,2) {$R$};
		\node[anchor=north east,color3dark,re] at (2,2) {$T$};
		\node[fill=white,draw,circle,minimum size=14pt,inner sep=0pt] (f) at (1,1) {$f$};
		\draw (1,2) -- node[ed,swap] {$P$} (f) -- node[ed,swap] {$P'$} +(0,-1);
	\end{tikzpicture}\end{center}
	&&
	\begin{center}2-cell\\$\xymatrix{S\rtwocell^N_{N'}{g} & T}$\end{center} &
	\begin{center}\begin{tikzpicture}
		\fill[color2fill] (0,0) rectangle (1,2);
		\fill[color3fill] (1,0) rectangle (2,2);
		\node[anchor=north west,color2dark,re] at (0,2) {$S$};
		\node[anchor=north east,color3dark,re] at (2,2) {$T$};
		\node[fill=white,draw,circle,minimum size=14pt,inner sep=0pt] (g) at (1,1) {$g$};
		\draw (1,2) -- node[ed,swap] {$N$} (g) -- node[ed,swap] {$N'$} +(0,-1);
	\end{tikzpicture}\end{center}
	\\\\
	\begin{center}2-cell\\$\xymatrix{R\rtwocell^{N \triangleright P}_{N \triangleright P'}{\,\,f_*} & S}$\end{center} &
	\begin{center}\begin{tikzpicture}
		\fill[color1fill] (0,0) rectangle (1,2);
		\fill[color3fill] (1,0) rectangle (2,2);
		\fill[color2fill] (2,0) rectangle (3,2);
		\node[anchor=north west,color1dark,re] at (0,2) {$R$};
		\node[anchor=north,color3dark,re] at (1.5,2) {$T$};
		\node[anchor=north east,color2dark,re] at (3,2) {$S$};
		\node[fill=white,draw,circle,minimum size=14pt,inner sep=0pt] (f) at (1,1) {$f$};
		\draw[ds] (1,2) -- node[ed,swap] {$P$} (f) --node[ed,swap] {$P'$} +(0,-1);
		\draw (2,2) -- node[ed] {$N$} +(0,-2);
	\end{tikzpicture}\end{center}
	&&
	\begin{center}2-cell\\$\xymatrix{R\rtwocell^{N' \triangleright P}_{N \triangleright P}{\,\,g^*} & S}$\end{center} &
	\begin{center}\begin{tikzpicture}
		\fill[color1fill] (0,0) rectangle (1,2);
		\fill[color3fill] (1,0) rectangle (2,2);
		\fill[color2fill] (2,0) rectangle (3,2);
		\node[anchor=north west,color1dark,re] at (0,2) {$R$};
		\node[anchor=north,color3dark,re] at (1.5,2) {$T$};
		\node[anchor=north east,color2dark,re] at (3,2) {$S$};
		\node[fill=white,draw,circle,minimum size=14pt,inner sep=0pt] (g) at (2,1) {$g$};
		\draw[ds] (1,2) -- node[ed,swap] {$P$} +(0,-2);
		\draw (2,2) -- node[ed] {$N'$} (g) -- node[ed] {$N$} +(0,-1);
	\end{tikzpicture}\end{center}
\end{tabular}
	\caption{String diagrams for functoriality of hom}
	\label{fig:hom_functoriality}
\end{figure}
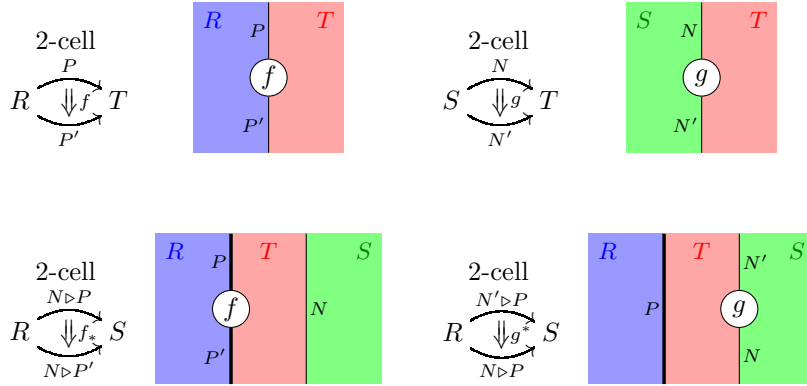

Contravariance of hom in the source makes clear that the reversal of the 0-cell regions of $N$ and $N'$ (or, generally, any string other than the distinguished one) occurs because those strings should be thought of as traveling upward, while the distinguished string alone travels in the usual downward direction. This is consistent with the identification of $N \triangleright P$ with $P \odot N^*$ when $N$ is right dualizable (Remark~\ref{rmk:drawing_homs}), since we think of an $N^*$ string as an $N$ string traveling the wrong way (that is, upward).

As usual, we draw no strings for unit 1-cells; their omission is justified by Lemmas~\ref{lem:hom_unit_iso_compatibility} and \ref{lem:hom_unit_iso_equality}. For example, the first picture in Figure~\ref{fig:hom_objects} could represent $M$, $U_S \triangleright M$, or $M \triangleleft U_R$, or $(M \triangleleft U_R) \triangleleft U_R$, among other possibilities.

\subsection{Duality in closed bicategories}
\label{sec:duality_closed_bicat}

When internal hom-functors are present, they are intimately related with duality. The dual of a finite-dimensional $k$-vector space $V$, for example, is the hom-space $\Hom_k(V, k)$; in fact, in a closed symmetric monoidal category or closed bicategory, the dual of $M$, when it exists, always takes the form (up to isomorphism) of a hom-object from $M$ into a unit object. As a prerequisite to proving this, we consider the maps, natural in $M$, $N$, and $P$,
\[
	M \odot (P \triangleright N) \xrightarrow{\mu} P \triangleright (M \odot N) \qquad \text{and} \qquad (M \triangleleft P) \odot N \xrightarrow{\nu} (M \odot N) \triangleleft P,
\]
which are the transposes of
\[
	M \odot (P \triangleright N) \odot P \xrightarrow{1 \odot \ev} M \odot N \qquad \text{and} \qquad P \odot (M \triangleleft P) \odot N \xrightarrow{\ev \odot 1} M \odot N.
\]

\begin{prop}[\cite{MaySigurdsson}]
\label{prop:mu_iso}
The map $\mu : M \odot (P \triangleright N) \to P \triangleright (M \odot N)$ is an isomorphism if either $M$ or $P$ is right dualizable. Similarly, $\nu : (M \triangleleft P) \odot N \to (M \odot N) \triangleleft P$ is an isomorphism if either $N$ or $P$ is left dualizable.
\end{prop}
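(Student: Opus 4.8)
The plan is to prove the statement for $\mu$ and to deduce the statement for $\nu$ formally, since the map $\nu : (M \triangleleft P) \odot N \to (M \odot N) \triangleleft P$ is the left-handed mirror of $\mu$: reversing the order of $\odot$-composition in $\mathscr{B}$ interchanges the two internal hom-functors and swaps ``right dualizable'' with ``left dualizable'', carrying $\mu$ to $\nu$. For $\mu$, I would split on the two hypotheses. In both cases the idea is the same: the hypothesis produces a second adjunction alongside the tensor--hom adjunction, and $\mu$ turns out to be the canonical comparison between two realizations of a right adjoint; the only labor is the bookkeeping needed to see this.

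Suppose first that $P$ is right dualizable, with right dual $P^*$, coevaluation $\eta$, and evaluation $\varepsilon$. Then $- \odot P$ has \emph{two} right adjoints: $P \triangleright -$, by definition of a closed bicategory, and $- \odot P^*$, because $(\eta, \varepsilon)$ assembles into an adjunction $- \odot P \dashv - \odot P^*$ whose triangle identities are precisely those of the dual pair (Definition~\ref{defn:dual_pair_bicat}). By uniqueness of right adjoints there is a natural isomorphism $P \triangleright X \cong X \odot P^*$, namely the transpose of $X \odot P^* \odot P \xrightarrow{1 \odot \varepsilon} X$. Under this isomorphism both the source and the target of $\mu$ are identified with $M \odot N \odot P^*$, and unwinding $\mu$ as the transpose of $1 \odot \ev$ (and identifying $\ev$ with the counit $1 \odot \varepsilon$) shows that $\mu$ becomes the identity; hence $\mu$ is an isomorphism. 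Equivalently one can simply write down the inverse $P \triangleright (M \odot N) \to M \odot N \odot P^* \to M \odot (P \triangleright N)$ explicitly from $\eta$, $\varepsilon$, $\ev$ and $\coev$, and check that both composites with $\mu$ are identities.

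Now suppose instead that $M$ is right dualizable, with right dual $M^*$; a dual pair $(M, M^*)$ then yields the adjunction $M^* \odot - \dashv M \odot -$. I would argue by Yoneda: for each 1-cell $Z$ of the appropriate source and target, applying $\mathscr{B}(Z, -)$ to the target of $\mu$ gives, via the tensor--hom adjunction (\ref{eq:closed_bicat_tensor_hom}) and then $M^* \odot - \dashv M \odot -$,
\[
\mathscr{B}(Z, P \triangleright (M \odot N)) \cong \mathscr{B}(Z \odot P, M \odot N) \cong \mathscr{B}(M^* \odot Z \odot P, N),
\]
while applying $\mathscr{B}(Z, -)$ to the source of $\mu$ gives the same group using the same two adjunctions in the other order,
\[
\mathscr{B}(Z, M \odot (P \triangleright N)) \cong \mathscr{B}(M^* \odot Z, P \triangleright N) \cong \mathscr{B}(M^* \odot Z \odot P, N).
\]
A diagram chase, using the explicit units and counits of these adjunctions together with naturality of $\ev$, shows that the map induced by $\mu$ is the identity of $\mathscr{B}(M^* \odot Z \odot P, N)$; since this is natural in $Z$, $\mu$ is an isomorphism. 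In either case the main obstacle is precisely this bookkeeping: verifying that $\mu$, which is built purely as a transpose, coincides with the canonical comparison of two adjoints. This reduces to a chain of transpositions and applications of the triangle identities and the naturality of $\ev$ and $\coev$; it is routine, but the number of strings and parenthesizations involved makes it the place where care is needed. Finally, the statement for $\nu$ follows by the mirror-image version of this argument, as noted above.
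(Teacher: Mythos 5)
Your proposal is correct, but it reaches the conclusion by a different route than the paper, which simply exhibits explicit inverses: when $P$ is right dualizable the paper's inverse is
\[
P \triangleright (M \odot N) \xrightarrow{1 \odot \eta} (P \triangleright (M \odot N)) \odot P \odot P^* \xrightarrow{\ev \odot 1} M \odot N \odot P^* \xrightarrow{1 \odot \coev} M \odot (P \triangleright (N \odot P^* \odot P)) \xrightarrow{1 \odot (1 \odot \varepsilon)_*} M \odot (P \triangleright N),
\]
and when $M$ is right dualizable the inverse is $(1 \odot (\varepsilon \odot 1)_*) \circ (1 \odot \mu) \circ (\eta \odot 1)$, with $\nu$ handled ``similarly.'' Your first case is essentially a repackaging of the paper's: the adjunction $- \odot P \dashv - \odot P^*$ coming from the dual pair, plus uniqueness of right adjoints, gives $P \triangleright X \cong X \odot P^*$, under which $\mu$ is the identity; note that you correctly rederive this isomorphism from the triangle identities rather than quoting Corollary~\ref{cor:hom_from_dualizable}, which in the paper is a \emph{consequence} of this proposition, so there is no circularity. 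Your second case is genuinely different: instead of producing an inverse, you use the adjunction $M^* \odot - \dashv M \odot -$ and a Yoneda argument, checking that post-composition with $\mu$ corresponds to the identity of $\mathscr{B}(M^* \odot Z \odot P, N)$; this check does go through (it reduces to $\ev \circ (\mu \odot 1) = 1 \odot \ev$ and the interchange law applied to $\varepsilon$ and $\ev$), and the naturality in $Z$ makes Yoneda applicable at the level of the hom-categories $\mathscr{B}(A,B)$. The trade-off is that the paper's approach hands you concrete inverse 2-cells (which is occasionally convenient later, e.g.\ in Proposition~\ref{prop:dual_is_hom}), while your approach is less computational and makes the ``two right adjoints to the same functor'' mechanism transparent. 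Your reduction of the $\nu$ statement to the $\mu$ statement via the reversed bicategory is a legitimate formalization of the paper's ``constructed similarly,'' provided you note that reversing $\odot$ indeed yields a closed bicategory with $\triangleleft$ and $\triangleright$ interchanged and left/right dualizability swapped.
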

\begin{proof}
If $P$ is right dualizable, then
\begin{multline*}
	P \triangleright (M \odot N) \xrightarrow{1 \odot \eta} (P \triangleright (M \odot N)) \odot P \odot P^* \xrightarrow{\ev \odot 1} M \odot N \odot P^* \\ \xrightarrow{1 \odot \coev} M \odot (P \triangleright (N \odot P^* \odot P)) \xrightarrow{1 \odot (1 \odot \varepsilon)_*} M \odot (P \triangleright N)
\end{multline*}
is inverse to $\mu$. If $M$ is right dualizable, then
\begin{multline*}
	P \triangleright (M \odot N) \xrightarrow{\eta \odot 1} M \odot M^* \odot (P \triangleright (M \odot N)) \xrightarrow{1 \odot \mu} M \odot (P \triangleright (M^* \odot M \odot N)) \\ \xrightarrow{1 \odot (\varepsilon \odot 1)_*} M \odot (P \triangleright N)
\end{multline*}
is inverse to $\mu$. Inverses to $\nu$ are constructed similarly when either $N$ or $P$ is left dualizable.
\end{proof}

\begin{prop}
\label{prop:dual_is_hom}
If $M \in \mathscr{B}(R, S)$ is right dualizable, then $M \triangleright U_S$ is a right dual for $M$.
\end{prop}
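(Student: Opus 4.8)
The plan is to show that $M \triangleright U_S$ is isomorphic to the given right dual $M^*$ and then transport the dual pair structure along that isomorphism; pleasantly, the evaluation one obtains this way turns out to be exactly the closed-structure evaluation $\ev : (M \triangleright U_S) \odot M \to U_S$, so the dual pair $(M, M \triangleright U_S)$ can be described without introducing any new maps.

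First I would set up the comparison maps. Let $j : M^* \to M \triangleright U_S$ be the transpose, under the adjunction \eqref{eq:closed_bicat_tensor_hom}, of the evaluation $\varepsilon : M^* \odot M \to U_S$; thus $j$ is the unique 2-cell making the composite $M^* \odot M \xrightarrow{j \odot \id_M} (M \triangleright U_S) \odot M \xrightarrow{\ev} U_S$ equal to $\varepsilon$. In the other direction, let $k : M \triangleright U_S \to M^*$ be the composite
\[
	M \triangleright U_S \xrightarrow{\id \odot \eta} (M \triangleright U_S) \odot M \odot M^* \xrightarrow{\ev \odot \id} M^*,
\]
built from $\ev$ and the coevaluation $\eta : U_R \to M \odot M^*$ of the dual pair $(M, M^*)$ (here and below I suppress unit isomorphisms, as elsewhere in the paper).

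Next I would check that $j$ and $k$ are mutually inverse. For $M^* \xrightarrow{j} M \triangleright U_S \xrightarrow{k} M^*$: the interchange law for $\odot$ lets me slide $j$ past the insertion of $\eta$, and then the defining property of $j$ collapses the composite to $M^* \xrightarrow{\id \odot \eta} M^* \odot M \odot M^* \xrightarrow{\varepsilon \odot \id} M^*$, which is a triangle identity for $(M, M^*)$, hence $\id_{M^*}$. For $M \triangleright U_S \xrightarrow{k} M^* \xrightarrow{j} M \triangleright U_S$: since a 2-cell with target $M \triangleright U_S$ is determined by its transpose, it is enough to show that $(M \triangleright U_S) \odot M \xrightarrow{k \odot \id_M} M^* \odot M \xrightarrow{j \odot \id_M} (M \triangleright U_S) \odot M \xrightarrow{\ev} U_S$ equals $\ev$; using the defining property of $j$ to replace the last two arrows by $\varepsilon$, then expanding $k$ and applying the interchange law once more (the two evaluation maps now act on disjoint tensor factors), this becomes $\id_{M \triangleright U_S}$ tensored with the triangle identity $M \xrightarrow{\eta \odot \id} M \odot M^* \odot M \xrightarrow{\id \odot \varepsilon} M$, followed by $\ev$, so it equals $\ev$. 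The same computation shows directly that $\varepsilon \circ (k \odot \id_M) = \ev$.

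Finally I would transport the dual pair along $j$: $M \triangleright U_S$ is a right dual for $M$, with coevaluation $U_R \xrightarrow{\eta} M \odot M^* \xrightarrow{\id \odot j} M \odot (M \triangleright U_S)$ and evaluation $\ev$ (using $\varepsilon \circ (k \odot \id_M) = \ev$), the triangle identities following from those for $(M, M^*)$ once $j$ and $k$ are known to be inverse. I expect the main obstacle to be purely bookkeeping: keeping the suppressed unitors straight and tracking which tensor factors each evaluation map acts on, since conceptually the argument uses nothing beyond the triangle identities and the interchange law. (It avoids Proposition~\ref{prop:mu_iso} entirely, though one could alternatively route through the isomorphism $\mu : M \odot (M \triangleright U_S) \xrightarrow{\cong} M \triangleright (M \odot U_S)$ and the name of $\id_M : M \odot U_R \to M$.)
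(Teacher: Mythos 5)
Your proof is correct, but it takes a genuinely different route from the paper. The paper gets the coevaluation in one line from machinery it has already set up: it takes $U_R \xrightarrow{\coev} M \triangleright (U_R \odot M) \cong M \triangleright (M \odot U_S) \xrightarrow{\mu^{-1}} M \odot (M \triangleright U_S)$, where $\mu^{-1}$ exists by Proposition~\ref{prop:mu_iso} (whose construction of the inverse is where the dual pair $(M,M^*)$ gets used), and pairs it with $\ev : (M \triangleright U_S) \odot M \to U_S$; the triangle identities are left to the reader. You instead avoid Proposition~\ref{prop:mu_iso} entirely: you build the explicit comparison isomorphism $j : M^* \to M \triangleright U_S$ (the transpose of $\varepsilon$) with inverse $k$ assembled from $\eta$ and $\ev$, check $kj = \id$ and $jk = \id$ using only the triangle identities, interchange, and uniqueness of transposes, and then transport the dual-pair structure along $j$, noting that the transported evaluation is exactly $\ev$ because $\varepsilon \circ (k \odot \id_M) = \ev$. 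What each approach buys: the paper's argument is shorter given that $\mu$ is already available and slots directly into Corollary~\ref{cor:hom_from_dualizable}; yours is more self-contained and elementary, makes the triangle identities explicit rather than implicit, and exhibits directly the isomorphism $M^* \cong M \triangleright U_S$, which is essentially the unit-1-cell case of Corollary~\ref{cor:hom_from_dualizable} proved without the detour through $\mu$. The two verifications you carry out (the defining property of $j$ and the identity $\varepsilon \circ (k \odot \id_M) = \ev$) are exactly the compatibilities needed for the transport to work, so there is no gap.
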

\begin{proof}
If $M$ is right dualizable, then
\[
	U_R \xrightarrow{\coev} M \triangleright (U_R \odot M) \cong M \triangleright (M \odot U_S) \underset{\cong}{\xrightarrow{\mu^{-1}}} M \odot (M \triangleright U_S)
\]
and $\ev : (M \triangleright U_S) \odot M \to U_S$ exhibit $(M, M \triangleright U_S)$ as a dual pair.
\end{proof}

Similarly, if $M$ is left dualizable, then its left dual is $U_R \triangleleft M$. Together with the isomorphism $\mu$ of Proposition~\ref{prop:mu_iso} in the case that $N$ is a unit 1-cell (or the isomorphism $\nu$ in the case that $M$ is a unit 1-cell), this implies the following.

\begin{cor}
\label{cor:hom_from_dualizable}
If $(P, P^*)$ is a dual pair, then $P \triangleright M \cong M \odot P^*$ for any 1-cell $M$ and $N \triangleleft P^* \cong P \odot N$ for any 1-cell $N$.
\end{cor}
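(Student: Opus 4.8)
The plan is to deduce both isomorphisms by chaining together results already in hand: Proposition~\ref{prop:dual_is_hom} (which identifies a dual with a hom into a unit $1$-cell), uniqueness of duals (Remark~\ref{rmk:uniqueness_of_duals}), the comparison maps $\mu$ and $\nu$ of Proposition~\ref{prop:mu_iso}, and the unit isomorphisms for $\odot$. No new construction is needed; the only care required is bookkeeping of source and target $0$-cells so that every composite and hom-object that appears is legal.

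For the first isomorphism, write $P : R \pto S$, so that $P^* : S \pto R$, and let $M$ be any $1$-cell with target $S$, so that both $P \triangleright M$ and $M \odot P^*$ lie in the same hom-category. First I would invoke Proposition~\ref{prop:dual_is_hom}, applied to the right dualizable $1$-cell $P$, to see that $P \triangleright U_S$ is a right dual of $P$; then uniqueness of right duals (Remark~\ref{rmk:uniqueness_of_duals}) gives an isomorphism $P^* \cong P \triangleright U_S$. Tensoring on the left with $M$ yields $M \odot P^* \cong M \odot (P \triangleright U_S)$. Next, since $P$ is right dualizable, Proposition~\ref{prop:mu_iso} says the comparison map
\[
	\mu : M \odot (P \triangleright U_S) \to P \triangleright (M \odot U_S)
\]
is an isomorphism. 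Finally, applying the functor $P \triangleright -$ to the unit isomorphism $M \odot U_S \xrightarrow{\cong} M$ yields $P \triangleright (M \odot U_S) \cong P \triangleright M$. Composing these four isomorphisms gives $M \odot P^* \cong P \triangleright M$.

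The second isomorphism is the mirror image, using the left-handed versions of the same tools. Since $(P, P^*)$ is a dual pair, $P^*$ is left dualizable with left dual $P$; the left-handed analogue of Proposition~\ref{prop:dual_is_hom} (stated immediately after it) then identifies $U_S \triangleleft P^*$ as a left dual of $P^*$, so $P \cong U_S \triangleleft P^*$. For a $1$-cell $N$ with source $S$, tensoring on the right with $N$, then applying the comparison isomorphism $\nu : (U_S \triangleleft P^*) \odot N \to (U_S \odot N) \triangleleft P^*$ of Proposition~\ref{prop:mu_iso} (valid because $P^*$ is left dualizable), and finally applying $- \triangleleft P^*$ to the unit isomorphism $U_S \odot N \xrightarrow{\cong} N$, produces the chain $P \odot N \cong (U_S \triangleleft P^*) \odot N \cong (U_S \odot N) \triangleleft P^* \cong N \triangleleft P^*$.

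The arguments are routine; the only real pitfall is getting the variances right---$\triangleright$ and $\triangleleft$ are covariant in their ``target'' slot and contravariant in their ``source'' slot, so it matters that the unit isomorphisms are applied to the correct argument---and making sure the $0$-cell indices line up so that, e.g., $P \triangleright (M \odot U_S)$ actually parses. Alternatively, since these isomorphisms are precisely what underlies the string-diagram conventions of Remark~\ref{rmk:drawing_homs}, one could also read them off a picture, but the direct chain above is just as short. I do not anticipate any genuine obstacle.
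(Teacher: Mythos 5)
Your proposal is correct and follows exactly the route the paper indicates: identify $P^* \cong P \triangleright U_S$ (resp.\ $P \cong U_S \triangleleft P^*$) via Proposition~\ref{prop:dual_is_hom} and uniqueness of duals, then apply $\mu$ (resp.\ $\nu$) from Proposition~\ref{prop:mu_iso} with the unit $1$-cell in the remaining slot and cancel the unit. The $0$-cell bookkeeping and variance checks are all right, so there is nothing to add.
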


The corollary supplies a slick argument that induction and coinduction coincide for representations of finite-index subgroups:

\begin{example}
Let $V$ be a $k$-linear $H$-representation, where $H \leq G$ is a subgroup of finite index. We saw in Example~\ref{ex:res_and_ind_characters} that $(k[G]_{\varphi}, {}_{\varphi}k[G])$ is a dual pair, and therefore
\[
	\Ind_H^G(V) = k[G]_{\varphi} \odot V \cong V \triangleleft {}_{\varphi}k[G] = \Coind_H^G(V).
\]
\end{example}

A map of the appropriate form for taking traces has a mate (Definition~\ref{defn:mate}), and dually a map $M \triangleright Q \to P \triangleleft N$, which has the appropriate form for taking cotraces (Definition~\ref{defn:cotrace}), has a mate:

\begin{defn}
\label{defn:mate_2}
Let $(M, M^*)$ and $(N, N^*)$ be dual pairs in a closed bicategory. A map $f : M \triangleright Q \to P \triangleleft N$ has a \KEYWORD{mate} $f^* : Q \triangleleft N^* \to M^* \triangleright P$ given by
\begin{multline*}
	Q \triangleleft N^* \underset{\cong}{\xrightarrow{\overline{r}_*}} (U_S \triangleright Q) \triangleleft N^* \xrightarrow{(\varepsilon^*)_*} ((M^* \odot M) \triangleright Q) \triangleleft N^* \underset{\cong}{\xrightarrow{t_*}} (M^* \triangleright (M \triangleright Q)) \triangleleft N^* \\
	\xrightarrow{f_{**}} (M^* \triangleright (P \triangleleft N)) \triangleleft N^* \underset{\cong}{\xrightarrow{a}} M^* \triangleright ((P \triangleleft N) \triangleleft N^*) \underset{\cong}{\xrightarrow{t^{-1}_*}} M^* \triangleright (P \triangleleft (N \odot N^*)) \\
	\xrightarrow{(\eta^*)_*} M^* \triangleright (P \triangleleft U_R) \underset{\cong}{\xrightarrow{\overline{l}^{-1}_*}} M^* \triangleright P.
\end{multline*}
\end{defn}

Similarly, a map $g : Q \triangleleft N^* \to M^* \triangleright P$ has a mate $g^* : M \triangleright Q \to P \triangleleft N$. Moreover, $f^{**} = f$ and $g^{**} = g$ for any $f : M \triangleright Q \to P \triangleleft N$ and $g : Q \triangleleft N^* \to M^* \triangleright P$. While this definition of $f^*$ resembles Definition~\ref{defn:mate_2}, there is an alternate description of $f^*$ which is often easier to work with by virtue of involving fewer hom-objects; it is the transpose of
\[
	(Q \triangleleft N^*) \odot M^* \xrightarrow{\cong} N \odot Q \odot M^* \xrightarrow{\cong} N \odot (M \triangleright Q) \xrightarrow{1 \odot f} N \odot (P \triangleleft N) \xrightarrow{\ev} P,
\]
where the first two maps come from Corollary~\ref{cor:hom_from_dualizable}.

\section{Coshadows and cotraces}
\label{sec:coshadows_and_cotraces}

There are certain constructions appearing in the literature under the name of \emph{cotrace}, which resemble traces in some ways but differ in others. In this section, we develop a theory of bicategorical cotraces which generalizes these examples and draws them into the framework of bicategorical duality and trace.

The prototypical example of a shadow is Hochschild homology
\[
	\HH_0(R, M) \cong M \otimes_{R \otimes R^{\op}} R,
\]
which has the property that
\[
	\HH_0(R, M \otimes_S N) \cong \HH_0(S, N \otimes_R M)
\]
for bimodules ${}_RM_S, {}_SN_R$. Hochschild cohomology
\[
	\HH^0(R, M) \cong \Hom_{R-R}(R, M)
\]
does not have this property, but it does have the property that
\[
	\HH^0(R, \Hom_S(M, N)) \cong \Hom_{R-S}(M, N) \cong \HH^0(S, \Hom_R(M, N))
\]
for bimodules ${}_RM_S, {}_RN_S$. This suggests that the appropriate setting for studying cotraces is a closed bicategory and that the analogue of a shadow functor should be the following.

\begin{defn}
\label{defn:coshadow}
A \KEYWORD{coshadow} for a closed bicategory $\mathscr{B}$ is a category $\mathbf{T}$ and functors
\[
	\lsh{-} : \mathscr{B}(R, R) \to \mathbf{T}
\]
for each 0-cell $R$ of $\mathscr{B}$, equipped with natural isomorphisms
\[
	\theta : \lsh{M \triangleright N} \xrightarrow{\cong} \lsh{N \triangleleft M}
\]
for each $M, N \in \mathscr{B}(R, S)$, such that the following diagrams commute whenever they make sense:
\[
\begin{tikzcd}
	\lsh{(M \odot N) \triangleright P}
		\arrow[d, "\lsh{t}"', "\cong"]
		\arrow[r, "\theta", "\cong"']
	& \lsh{P \triangleleft (M \odot N)}
		\arrow[r, "\lsh{t}", "\cong"']
	& \lsh{(P \triangleleft M) \triangleleft N}
		\arrow[d, "\theta", "\cong"']
	\\
	\lsh{M \triangleright (N \triangleright P)}
		\arrow[r, "\theta"', "\cong"]
	& \lsh{(N \triangleright P) \triangleleft M}
		\arrow[r, "\lsh{a}"', "\cong"]
	& \lsh{N \triangleright (P \triangleleft M)}
\end{tikzcd}
\]
\[
\begin{tikzcd}
	\lsh{U_R \triangleright M}
		\arrow[r, "\theta", "\cong"']
		\arrow[dr, "\lsh{\overline{r}^{-1}}"', "\cong"]
	& \lsh{M \triangleleft U_R}
		\arrow[r, "\theta", "\cong"']
		\arrow[d, "\lsh{\overline{l}^{-1}}"', "\cong"]
	& \lsh{U_R \triangleright M}
		\arrow[dl, "\lsh{\overline{r}^{-1}}", "\cong"']
	\\
	& \lsh{M}
\end{tikzcd}
\]
\end{defn}

Our diagrammatic treatment of coshadows (Figure~\ref{fig:coshadow}) is identical to that of shadows (Figure~\ref{fig:shadow}); the only additional comment we make is that the distinguished string and the undistinguished strings are equally free to travel around the back of the cylinder.

\begin{figure}[h]
	\centering
	\begin{subfigure}[b]{0.3\textwidth}
	\centering
	\begin{tikzpicture}
		\bgcylinder{0,0}{2}{1.2}{.3}{color1}{color1}
		\node[anchor=south west,color1dark,re] at (dl) {$R$};
		\draw[ds] (top) -- node[ed] {$M$} (bot);
	\end{tikzpicture}
	\caption{Coshadow $\lsh{M}$}
	\label{fig:coshadow_single_object}
\end{subfigure}
\begin{subfigure}[b]{0.3\textwidth}
	\centering
	\begin{tikzpicture}
		\bgcylinder{0,0}{3.7}{1.2}{.3}{color1}{color2}
		\begin{pgfonlayer}{foreground}
			\drawtheta{(0,2.1)}{0}{1}{th}{}
		\end{pgfonlayer}

		\filldraw[color2fill] let \p1 = ($(ul)!.65!(ur)$) in
			(thR) -- ++(0.1,0) -- (ur') -- ($(ul)!.65!(ur)$) -- (\x1, 3.1) to[out=-90,in=90] node[ed] {$M$} (thR);
		\filldraw[color2fill] let \p1 = ($(dl)!.35!(dr)$), \p2 = ($(dl)!.65!(dr)$) in
			(thL) to[out=-90,in=90] (\x1, 0.0) -- (\p1 |- bot') -- (\p2 |- bot') -- (\x2, 0.0) to[out=90,in=-90] node[ed,near start] {$N$} (\x1, 3.29) -- ($(ul)!.35!(ur)$) -- (ul') -- (ul' |- thL) -- (thL);
		\draw[ds] let \p1 = ($(dl)!.35!(dr)$), \p2 = ($(dl)!.65!(dr)$) in
			(\p2 |- bot') -- (\x2, 0.0) to[out=90,in=-90] (\x1, 3.29) -- ($(ul)!.35!(ur)$);

		\node[anchor=south east,color1dark,re] at (dr) {$R$};
		\node[anchor=north west,color2dark,re] at ($(ul)!.1!(dl)$) {$S$};
	\end{tikzpicture}
	\caption{Cyclicity $\theta_{M,N}$}
\end{subfigure}
	\caption{String diagrams for coshadows}
	\label{fig:coshadow}
\end{figure}
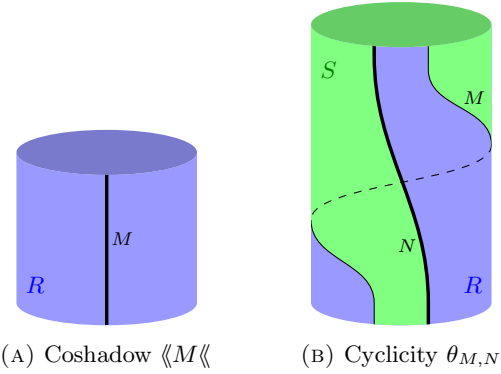

\begin{example}
Zeroth Hochschild cohomology
\[
	\HH^0(R, M) \cong \{m \in M : mr = rm \,\forall r \in R\} \cong \Hom_{R-R}(R, M)
\]
defines a coshadow on $\MORITABICAT$ with target $\mathbf{Ab}$ (or a coshadow on $\MORITABICATK$ with target $\mathbf{Vect}_k$).
\end{example}

\begin{example}
\label{ex:categorical_trace}
The \emph{categorical trace} of \cite[Definition~3.1]{GK2006}, which sends a 1-cell $M \in \mathscr{B}(R, R)$ to the set $\Hom_{\mathscr{B}(R, R)}(U_R, M)$, is an example of a coshadow. The key observation is that for $M, N \in \mathscr{B}(R, S)$, the required isomorphism $\theta$ comes from the tensor-hom adjunctions
\[
\Hom_{\mathscr{B}(R, R)}(U_R, M \triangleright N) \cong \Hom_{\mathscr{B}(R, S)}(M, N) \cong \Hom_{\mathscr{B}(S, S)}(U_S, N \triangleleft M).
\]
This coshadow takes values in the category of sets, but if the bicategory is enriched in a symmetric monoidal category $\mathscr{V}$ (in the sense that categories $\mathscr{B}(R, S)$ are $\mathscr{V}$-enriched categories in a way that is compatible with the horizontal composition of $\mathscr{B}$), then the categorical trace defines a $\mathscr{V}$-valued coshadow. It is a curious fact that the categorical trace provides a simple example of a coshadow on any closed bicategory whatsoever.
\end{example}

\begin{defn}
\label{defn:cotrace}
Let $\mathscr{B}$ be a closed bicategory with a coshadow and $(M, M^*)$ a dual pair with $M \in \mathscr{B}(R, S)$. The \KEYWORD{cotrace} of a 2-cell $f : M \triangleright Q \to P \triangleleft M$, denoted $\cotr(f)$, is the composite
\begin{multline*}
	\lsh{Q} \underset{\cong}{\xrightarrow{\overline{r}}} \lsh{U_S \triangleright Q} \xrightarrow{\lsh{\varepsilon^*}} \lsh{(M^* \odot M) \triangleright Q} \underset{\cong}{\xrightarrow{\lsh{t}}} \lsh{M^* \triangleright (M \triangleright Q)} \\
	\xrightarrow{\lsh{f_*}} \lsh{M^* \triangleright (P \triangleleft M)} \underset{\cong}{\xrightarrow{\theta}} \lsh{(P \triangleleft M) \triangleleft M^*} \underset{\cong}{\xrightarrow{\lsh{t^{-1}}}} \lsh{P \triangleleft (M \odot M^*)} \\
	\xrightarrow{\lsh{\eta^*}} \lsh{P \triangleleft U_R} \underset{\cong}{\xrightarrow{\lsh{\overline{l}^{-1}}}} \lsh{P}.
\end{multline*}
\end{defn}

This definition mirrors that of the bicategorical trace (Definition~\ref{defn:bicategorical_trace}); the only reason it appears to be composed of more maps than the trace is that in the trace we usually suppress the associators
\[
	Q \odot (M \odot M^*) \cong (Q \odot M) \odot M^* \qquad\text{and}\qquad M^* \odot (M \odot P) \cong (M^* \odot M) \odot P,
\]
whereas for the cotrace we always explicitly write out the isomorphisms
\[
	(M^* \odot M) \triangleright Q \cong M^* \triangleright (M \triangleright Q) \qquad\text{and}\qquad (P \triangleleft M) \triangleleft M^* \cong P \triangleleft (M \odot M^*).
\]

When there are multiple dualizable objects in play, we will sometimes subscript $\cotr$ (or $\tr$) with the dualizable object being used for that cotrace (or trace); that is, we might write $\cotr_M(f)$ for the cotrace in Definition~\ref{defn:cotrace}.

A string diagram of the cotrace is shown in Figure~\ref{fig:bicat_cotrace}; the only difference between this picture and that of the trace (Figure~\ref{fig:bicat_trace}) is the greater thickness distinguishing the $Q$ and $P$ strings.

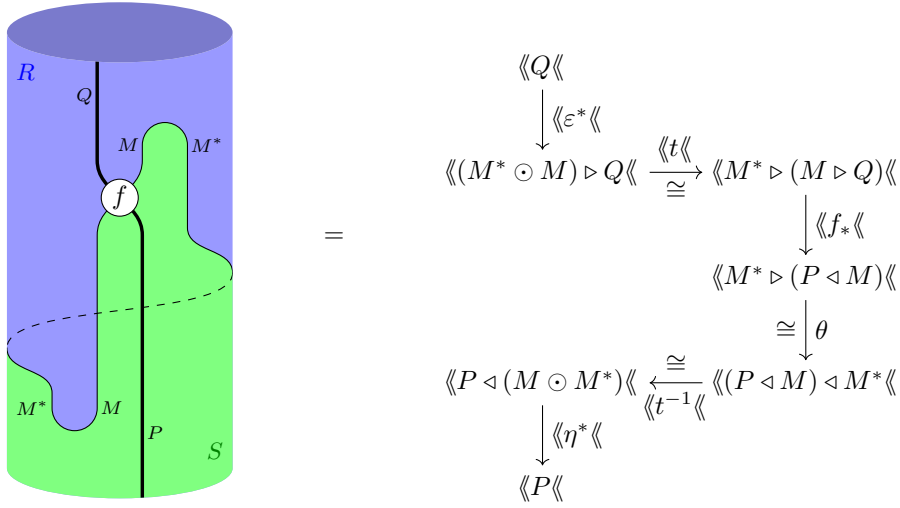
\begin{figure}[h]
	\centering
	\begin{tabular}{m{30mm}m{20mm}m{60mm}}
	\begin{tikzpicture}
		\bgcylinder{0,0}{5.8}{1.5}{.4}{color1}{color1}

		\coordinate (f) at (1.5, 3.6) {};
		\path (f) -- ++(-0.3, 0.5) coordinate (ful);
		\path (f) -- ++(0.3, 0.5) coordinate (fur);
		\path (f) -- ++(-0.3, -0.5) coordinate (fdl);
		\path (f) -- ++(0.3, -0.5) coordinate (fdr);

		\begin{pgfonlayer}{foreground}
			\drawtheta{(f)}{-1.0}{1}{th}{}
		\end{pgfonlayer}

		\filldraw[color2fill] (f) to[out=-135, in=90] (fdl) -- (fdl |- thL) -- node[ed,pos=1]{$M$} ++(0, -0.8) arc (0:-180:0.3) -- node[ed,pos=0]{$M^*$} ++(0, 0.2) to[out=90, in=-90] (thL) -- (thL') -- (thL' |- bot') -- (thR' |- bot') -- (thR') -- (thR) to[out=90, in=-90] ++(-0.6, 0.6) -- ($(fur) + (0.6, 0)$) -- node[ed,swap,pos=1]{$M^*$} ++(0, 0.2) arc(0:180:0.3) -- node[ed,swap,pos=0]{$M$} (fur) to[out=-90, in=45] (f);
		\draw[ds] (f) to[out=135, in=-90] (ful) -- node[ed]{$Q$} (ful |- ul);
		\draw[ds] (f) to[out=-45, in=90] (fdr) -- node[ed,near end]{$P$} (fdr |- bot);

		\node[vert] at (f) {$f$};
		\node[anchor=north west,color1dark,re] at ($(ul)!.05!(dl)$) {$R$};
		\node[anchor=south east,color2dark,re] at (dr) {$S$};
	\end{tikzpicture}
	& \begin{center} = \end{center} &
	\begin{tikzpicture}
		\node at (0, 6.5) {}; 
		\node (A) at (0, 5.6) {$\lsh{Q}$};
		\node (B) at (0, 4.2) {$\lsh{(M^* \odot M) \triangleright Q}$};
		\node (C) at (3.5, 4.2) {$\lsh{M^* \triangleright (M \triangleright Q)}$};
		\node (D) at (3.5, 2.8) {$\lsh{M^* \triangleright (P \triangleleft M)}$};
		\node (E) at (3.5, 1.4) {$\lsh{(P \triangleleft M) \triangleleft M^*}$};
		\node (F) at (0, 1.4) {$\lsh{P \triangleleft (M \odot M^*)}$};
		\node (G) at (0, 0) {$\lsh{P}$};

		\draw[->] (A) -- node[right]{$\lsh{\varepsilon^*}$} (B);
		\draw[->] (B) -- node[above]{$\lsh{t}$} node[below]{$\cong$} (C);
		\draw[->] (C) -- node[right]{$\lsh{f_*}$} (D);
		\draw[->] (D) -- node[right]{$\theta$} node[left]{$\cong$} (E);
		\draw[->] (E) -- node[below]{$\lsh{t^{-1}}$} node[above]{$\cong$} (F);
		\draw[->] (F) -- node[right]{$\lsh{\eta^*}$} (G);
	\end{tikzpicture}
\end{tabular}
	\caption{The bicategorical cotrace}
	\label{fig:bicat_cotrace}
\end{figure}

\begin{lem}
\label{lem:cotrace_well_defined}
The cotrace of $f$ is independent of the choices of $M^*$, $\eta$, and $\varepsilon$.
\end{lem}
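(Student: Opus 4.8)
The strategy is the same as in the shadow setting (cf.\ the well-definedness of the bicategorical trace in \cite{Shadows_and_traces}): reduce the claim about arbitrary choices of dual data to a statement about a single canonical comparison isomorphism between any two right duals of $M$. By Remark~\ref{rmk:uniqueness_of_duals}, if $(M, M^*)$ and $(M, M^{**})$ are two dual pairs with coevaluations $\eta, \eta'$ and evaluations $\varepsilon, \varepsilon'$, there is a canonical isomorphism $\phi : M^* \xrightarrow{\cong} M^{**}$, built from $\eta'$ and $\varepsilon$, which is compatible with the dual data in the sense that $(\id_M \odot \phi) \circ \eta = \eta'$ and $\varepsilon' \circ (\phi^{-1} \odot \id_M) = \varepsilon$ (equivalently $\varepsilon = \varepsilon' \circ (\phi^{-1} \odot \id_M)$). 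So it suffices to show that $\cotr(f)$ computed with $(M^*, \eta, \varepsilon)$ equals $\cotr(f)$ computed with $(M^{**}, \eta', \varepsilon')$, knowing these compatibility relations.

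First I would write down the two composites defining $\cotr(f)$ from Definition~\ref{defn:cotrace}, one using $M^*$ and one using $M^{**}$, and insert the isomorphism $\lsh{\phi^{-1}_* }$ (that is, $\lsh{-}$ applied to functoriality of $\triangleright$ in the variable $M^*$) and its appropriate $\triangleleft$-counterpart $\lsh{(\phi^{-1})^*}$ at the intermediate stages involving $M^* \triangleright (-)$ and $(-) \triangleleft M^*$. Concretely, I would build a ladder diagram between the two composites whose rungs are instances of $\lsh{\phi}$ (or $\lsh{\phi^{-1}}$) in the relevant slots, and check that each square commutes. The squares fall into three types: (1) those involving $\lsh{\overline{r}}$ and the two $\lsh{\varepsilon^*}$ vs.\ $\lsh{\varepsilon'^*}$ maps, which commute because of the compatibility relation $\varepsilon = \varepsilon' \circ (\phi^{-1} \odot 1)$ together with functoriality of $\triangleright$ in its source; (2) those involving $\lsh{t}$, $\theta$, $\lsh{t^{-1}}$, $\lsh{a}$, which commute by naturality of $t$, $\theta$, $a$ in the argument where $\phi$ acts; and (3) those involving $\lsh{\eta^*}$ vs.\ $\lsh{\eta'^*}$ and $\lsh{\overline{l}^{-1}}$, which again reduce to the compatibility relation $\eta' = (1 \odot \phi) \circ \eta$. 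The only genuinely non-formal inputs are the naturality squares of $\theta$ (part of the coshadow axioms) and the naturality of the tensor--hom isomorphisms $t$ and $a$ in closed bicategories, all of which are available.

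Alternatively — and this is probably the cleaner route — I would invoke the ``few hom-objects'' reformulation of the mate from the end of Section~\ref{sec:duality_closed_bicat}: since $\cotr(f)$ is built from the same data that defines $f^* : Q \triangleleft M^* \to M^* \triangleright P$ together with $\overline{r}$, $\varepsilon^*$, $\theta$, $\eta^*$, $\overline{l}^{-1}$, one can factor the verification through the statement that the mate construction is compatible with the comparison isomorphism $\phi$, i.e.\ that the diagram relating $f^*$ (with respect to $M^*$) and $f^*$ (with respect to $M^{**}$) via $\lsh{\phi}$-type maps commutes. This packages the bookkeeping into a single naturality-style lemma and then a short computation. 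Either way, I expect the main obstacle to be purely notational: keeping track of which copy of $M^*$ (inside a $\triangleright$ on the left, inside a $\triangleleft$ on the right, inside the composite $M \odot M^*$ or $M^* \odot M$) each instance of $\phi$ must be applied to, and ensuring the many associativity/transpose coherence isomorphisms ($t$, $t^{-1}$, $a$, $\overline{l}$, $\overline{r}$) slide past $\phi$ correctly — this is exactly the role played by Lemmas~\ref{lem:pentagon_a_and_t_1}, \ref{lem:pentagon_a_and_t_2}, \ref{lem:hom_unit_iso_compatibility}, and \ref{lem:hom_unit_iso_equality}. No step requires new ideas beyond what is already assembled; the proof is a diagram chase, and in the published version it may simply be deferred to a string-diagram argument (Figure~\ref{fig:bicat_cotrace}), where the independence of $\cotr(f)$ from the dual data is visually manifest because the bent $M^*$ strings can be straightened using the triangle identities regardless of which $\eta$, $\varepsilon$ implement them.
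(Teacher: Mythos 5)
Your primary route is essentially the paper's own proof: the paper also takes the canonical comparison isomorphism $\alpha : M^* \to M^{**}$ from Remark~\ref{rmk:uniqueness_of_duals}, checks its compatibility with the two evaluations and coevaluations via the triangle identities, and then chases a ladder diagram between the two cotrace composites whose remaining squares commute by functoriality of $-\triangleright-$ and naturality of $t$ and $\theta$ (Figure~\ref{fig:cotrace_well_defined}). The only slip is notational: the evaluation compatibility should read $\varepsilon' \circ (\phi \odot \id_M) = \varepsilon$ rather than involving $\phi^{-1}$, which does not typecheck; this does not affect the argument.
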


\begin{proof}
Let $f : M \triangleright Q \to P \triangleleft M$, where $M$ is right dualizable. Suppose that $M$ participates in two dual pairs: $(M, N)$, with coevaluation and evaluation $\eta$ and $\varepsilon$, and $(M, N')$, with coevaluation and evaluation $\eta'$ and $\varepsilon'$. As in Remark~\ref{rmk:uniqueness_of_duals}, there is an isomorphism $N \cong N'$ given by
\[N \xrightarrow{\id \odot \eta'} N \odot M \odot N' \xrightarrow{\varepsilon \odot \id} N';\]
call this isomorphism $\alpha$. Then in the diagram of Figure~\ref{fig:cotrace_well_defined}, the composite around the outside is the cotrace of $f$ with respect to the dual pair $(M, N)$, and the composite around the inside is the cotrace with respect to $(M, N')$. To see that the upper triangle in the diagram commutes, we expand $\alpha$ and appeal to a triangle identity for $(M, N')$:
\[\begin{tikzcd}
	N \odot M
		\arrow[r, "1 \odot \eta' \odot 1"]
		\arrow[dr, "1"']
		\arrow[rr, bend left=30, "\alpha \odot 1", "\cong"']
	& N \odot M \odot N' \odot M
		\arrow[r, "\varepsilon \odot 1^2"]
		\arrow[d, "1^2 \odot \varepsilon'"]
	& N' \odot M
		\arrow[d, "\varepsilon'"]
	\\
	& N \odot M
		\arrow[r, "\varepsilon"']
	& U_S
\end{tikzcd}\]
The other triangle in Figure~\ref{fig:cotrace_well_defined} commutes for similar reasons, and the rest of the diagram commutes by functoriality of $- \triangleright -$ or naturality of $t$ or $\theta$. Since the diagram commutes, the cotrace of $f$ does not depend on the choice of dual pair.
\end{proof}

\begin{figure}[h]
	\centering
	\resizebox{\textwidth}{!}{
		\begin{tikzpicture}[xscale=3.5, yscale=2]
	\newcommand{\Xa}{0}
	\newcommand{\Xb}{\Xa+0.7}
	\newcommand{\Xc}{\Xb+0.9}
	\newcommand{\Xd}{\Xc+1.0}
	\newcommand{\Xe}{\Xd+1.0}
	\newcommand{\Ya}{0}
	\newcommand{\Yb}{\Ya+1.0}
	\newcommand{\Yc}{\Yb+1.0}
	\newcommand{\Yd}{\Yc+1.0}
	\newcommand{\Ye}{\Yd+1.0}

	\node(A) at (\Xa, \Ye) {$\lsh{Q}$};
	\node(B) at (\Xb, \Ye) {$\lsh{U_S \triangleright Q}$};
	\node(C) at (\Xc, \Ye) {$\lsh{(N \odot M) \triangleright Q}$};
	\node(D) at (\Xe, \Ye) {$\lsh{N \triangleright (M \triangleright Q)}$};
	\node(E) at (\Xe, \Yc) {$\lsh{N \triangleright (P \triangleleft M)}$};
	\node(F) at (\Xe, \Ya) {$\lsh{(P \triangleleft M) \triangleleft N}$};
	\node(G) at (\Xc, \Ya) {$\lsh{P \triangleleft (M \odot N)}$};
	\node(H) at (\Xb, \Ya) {$\lsh{P \triangleleft U_R}$};
	\node(I) at (\Xa, \Ya) {$\lsh{P}$};
	\node(J) at (\Xc, \Yd) {$\lsh{(N' \odot M) \triangleright Q}$};
	\node(K) at (\Xd, \Yd) {$\lsh{N' \triangleright (M \triangleright Q)}$};
	\node(L) at (\Xd, \Yc) {$\lsh{N' \triangleright (P \triangleleft M)}$};
	\node(M) at (\Xd, \Yb) {$\lsh{(P \triangleleft M) \triangleleft N'}$};
	\node(N) at (\Xc, \Yb) {$\lsh{P \triangleleft (M \odot N')}$};

	\draw [->] (A) -- node[above]{$\lsh{\overline{r}}$} node[below]{$\cong$} (B);
	\draw [->] (B) -- node[above]{$\lsh{\varepsilon^*}$} (C);
	\draw [->] (C) -- node[above]{$\lsh{t}$} node[below]{$\cong$} (D);
	\draw [->] (D) -- node[right]{$\lsh{f_*}$} (E);
	\draw [->] (E) -- node[right]{$\lsh{\theta}$} node[left]{$\cong$} (F);
	\draw [->] (F) -- node[below]{$\lsh{t}$} node[above]{$\cong$} (G);
	\draw [->] (G) -- node[below]{$\lsh{\eta^*}$} (H);
	\draw [->] (H) -- node[below]{$\lsh{\overline{l}^{-1}}$} node[above]{$\cong$} (I);

	\draw [->] (B) -- node[below left]{$\lsh{(\varepsilon')^*}$} (J);
	\draw [->] (J) -- node[below]{$\lsh{t}$} node[above]{$\cong$} (K);
	\draw [->] (K) -- node[left]{$\lsh{f_*}$} (L);
	\draw [->] (L) -- node[left]{$\lsh{\theta}$} node[right]{$\cong$} (M);
	\draw [->] (M) -- node[above]{$\lsh{t}$} node[below]{$\cong$} (N);
	\draw [->] (N) -- node[above left]{$\lsh{(\eta')^*}$} (H);

	\draw [->] (J) -- node[right]{$\lsh{(\alpha \odot 1)^*}$} node[left]{$\cong$} (C);
	\draw [->] (K) -- node[below right]{$\lsh{\alpha^*}$} node[above left]{$\cong$} (D);
	\draw [->] (L) -- node[above]{$\lsh{\alpha^*}$} node[below]{$\cong$} (E);
	\draw [->] (M) -- node[above right]{$\lsh{\alpha^*}$} node[below left]{$\cong$} (F);
	\draw [->] (N) -- node[right]{$\lsh{(1 \odot \alpha)^*}$} node[left]{$\cong$} (G);
\end{tikzpicture}
	}
	\caption{Diagram for Lemma~\ref{lem:cotrace_well_defined}}
	\label{fig:cotrace_well_defined}
\end{figure}

\begin{example}
Let $M : R \pto S$ be a right dualizable 1-cell in $\MORITABICAT$, i.e.~an $(R, S)$-bimodule which is finitely generated and projective as a right $S$-module. Using $\HH^0$ as the coshadow, the cotrace of a 2-cell $f : \Hom_S(M, Q) \to \Hom_R(M, P)$ is the map
\[
	\HH^0(S, Q) \to \HH^0(R, P)
\]
taking $q$ to $\sum_i f(qe_i^*(-))(e_i)$, where $\{e_i\}$ and $\{e_i^*\}$ are a pair of dual bases for $M_S$ as in Example~\ref{ex:dualizable_bimodule}.
\end{example}

\begin{example}
Given a representation $V$ of a finite group $G$, there is a homomorphism $f : V \triangleright k \to k[G] \triangleleft V$ given by
\[
	f(\phi)(v) = \sum_{g \in G} \phi(g^{-1}v)g.
\]
In fact, this is an isomorphism, with inverse $f^{-1} : k[G] \triangleleft V \to V \triangleright k$ given by $f^{-1}(\varphi)(v) = e^*(\varphi(v))$. If $V$ is finite-dimensional (i.e.~right dualizable), then $f$ has a cotrace $\HH^0(k) \to \HH^0(k[G])$; since $k$ is commutative $\HH^0(k)$ is just $k$, and $\HH^0(k[G])$ is the subset of $k[G]$ consisting of linear combinations $\sum_{g \in G} a_g g$ such that $a_g = a_{g'}$ whenever $g$ and $g'$ are conjugate. The cotrace is
\[
	\cotr(f)(1) = \sum_{g \in G} \chi(V)(g^{-1}) g,
\]
where $\chi(V)$ is as in Example~\ref{ex:character}. Note that $\cotr(f)$ contains precisely the same information as the character $\chi(V)$: an element of $\HH^0(k[G])$ amounts to a scalar for each conjugacy class of $G$, and the scalars picked out by this cotrace are the values of $\chi(V)$.
\end{example}

We can define a symmetric monoidal cotrace, but it turns out be a trace. We write $[-, -]$ for the internal hom in a closed symmetric monoidal category (i.e. $[Y, -]$ is the right adjoint to $- \otimes Y$). We also make use of the map $\mu : X \otimes [Y, Z] \to [Y, X \otimes Z]$, which is an isomorphism if $X$ or $Y$ is dualizable (cf.~Proposition~\ref{prop:mu_iso}).

\begin{defn}
\label{defn:symmetric_monoidal_cotrace}
Let $(\mathscr{C}, \otimes, I)$ be a closed symmetric monoidal category and $(M, M^*)$ a dual pair. The \KEYWORD{cotrace} of a map $f : [M, Q] \to [M, P]$ is the composite:
\begin{multline*}
	Q \cong \left[I, Q\right] \xrightarrow{\varepsilon^*} \left[M^* \otimes M, Q\right] \underset{\cong}{\xrightarrow{t}} \left[M^*, \left[M, Q\right]\right] \xrightarrow{f_*} \left[M^*, \left[M, P\right]\right] \\
	\underset{\cong}{\xrightarrow{t^{-1}}} \left[M^* \otimes M, P\right] \underset{\cong}{\xrightarrow{s^*}} \left[M \otimes M^*, P\right] \xrightarrow{\eta^*} \left[I, P\right] \cong P
\end{multline*}
\end{defn}

This is similar to the bicategorical cotrace (Definition~\ref{defn:cotrace}), but the symmetry isomorphism obviates the need for a coshadow.

\begin{prop}
\label{prop:sym_mon_cotraces_are_traces}
Let $M$ be a dualizable object in a symmetric monoidal category, and let $f : [M, Q] \to [M, P]$. Then $\cotr(f) = \tr(\tilde{f})$, where $M^* := [M, I]$ and $\tilde{f}$ is the unique map making the following commute:
\[
\begin{tikzcd}
	Q \otimes M^*
		\arrow[d, "\tilde{f}"']
		\arrow[rr, "\mu", "\cong"']
	&& \left[M, Q\right]
		\arrow[d, "f"]
	\\
	M^* \otimes P
		\arrow[r, "s"', "\cong"]
	& P \otimes M^*
		\arrow[r, "\mu"', "\cong"]
	& \left[M, P\right]
\end{tikzcd}
\]
\end{prop}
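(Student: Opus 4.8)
The plan is to transpose the definition of $\cotr(f)$ until no internal hom-objects remain, and then to recognize what survives as the formula for $\tr(\tilde f)$. Throughout, give $M^* = [M, I]$ its canonical dual-pair structure, so that the evaluation $\varepsilon : M^* \otimes M \to I$ is the canonical map $\ev$; recall that $M$ and $M^*$ are both dualizable and that $(M^*, M)$ is again a dual pair, with coevaluation $\eta' = s \circ \eta$ and evaluation $\varepsilon' = \varepsilon \circ s$ (Remark~\ref{rmk:dual_pair_other_way}). Unwinding Definition~\ref{defn:bicategorical_trace} for the dualizable object $M^*$ and its dual $M$ (with $Q$ and $P$ as the twisting 1-cells), the trace of $\tilde f : Q \otimes M^* \to M^* \otimes P$ is
\[
	Q \xrightarrow{1 \otimes \eta'} Q \otimes M^* \otimes M \xrightarrow{\tilde f \otimes 1} M^* \otimes P \otimes M \xrightarrow{\cong} M \otimes M^* \otimes P \xrightarrow{\varepsilon' \otimes 1} P ,
\]
so it suffices to produce this same composite from the definition of $\cotr(f)$.

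First I would observe that the last two maps $s^*$ and $\eta^*$ of Definition~\ref{defn:symmetric_monoidal_cotrace} together amount to precomposing a map $M^* \otimes M \to P$ with $s \circ \eta = \eta'$; hence, writing $\Phi$ for the composite
\[
	Q \cong [I, Q] \xrightarrow{\varepsilon^*} [M^* \otimes M, Q] \xrightarrow{t} [M^*, [M, Q]] \xrightarrow{f_*} [M^*, [M, P]] \xrightarrow{t^{-1}} [M^* \otimes M, P]
\]
of the initial arrows of Definition~\ref{defn:symmetric_monoidal_cotrace}, one gets $\cotr(f) = \widehat{\Phi} \circ (1_Q \otimes \eta')$, where $\widehat{\Phi} : Q \otimes M^* \otimes M \to P$ is the transpose of $\Phi$. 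The heart of the proof is to compute $\widehat{\Phi}$. Transposing $\Phi$ makes $t$ and $t^{-1}$ collapse into the iterated evaluations that define them (Section~\ref{sec:closed_bicategories}); naturality of $\ev$ lets one slide $f_*$ past an evaluation and replace it by $f$; and the remaining fragments built from $\varepsilon^*$ and $t$ reassemble, after a short unwinding, into the canonical isomorphism $\mu : Q \otimes M^* \to [M, Q]$ of Proposition~\ref{prop:mu_iso}. This yields $\widehat{\Phi} = \ev \circ (f \otimes 1_M) \circ (\mu \otimes 1_M)$. Since each occurrence of $\mu$ is by definition the transpose of the corresponding map $1 \otimes \varepsilon$, one has $\ev \circ (\mu \otimes 1_M) = 1 \otimes \varepsilon$; inserting $\mu \circ \mu^{-1}$ appropriately and using that the defining square of $\tilde f$ says exactly $\mu^{-1} \circ f \circ \mu = s \circ \tilde f$, one concludes $\widehat{\Phi} = (1 \otimes \varepsilon) \circ \bigl((s \circ \tilde f) \otimes 1_M\bigr)$.

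Putting the pieces together, $\cotr(f)$ equals
\[
	Q \xrightarrow{1 \otimes \eta'} Q \otimes M^* \otimes M \xrightarrow{\tilde f \otimes 1} M^* \otimes P \otimes M \xrightarrow{s \otimes 1} P \otimes M^* \otimes M \xrightarrow{1 \otimes \varepsilon} P ,
\]
which differs from the formula for $\tr(\tilde f)$ above only in its last two arrows; so the proof finishes by checking that $(1 \otimes \varepsilon) \circ (s \otimes 1_M)$ equals $(\varepsilon' \otimes 1)$ precomposed with the symmetry $M^* \otimes P \otimes M \xrightarrow{\cong} M \otimes M^* \otimes P$, as maps $M^* \otimes P \otimes M \to P$---a routine instance of coherence for symmetric monoidal categories, since both cyclically reorder the three tensor factors and then apply $\varepsilon$. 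I expect the one genuinely delicate step to be the identification $\widehat{\Phi} = \ev \circ (f \otimes 1_M) \circ (\mu \otimes 1_M)$: keeping straight all the unit, symmetry, $t$, and $\mu$ isomorphisms, and verifying that the transposed fragments really do reassemble into $\mu$, takes care, although nothing in it is conceptually deep. (Equivalently, one can carry out the whole comparison diagrammatically, transporting every hom-object along $\mu$ so that the defining diagram of $\cotr(f)$ in Definition~\ref{defn:symmetric_monoidal_cotrace} turns literally into the trace diagram for $\tilde f$; this is the conceptual reason the constructions coincide, the cotrace being built entirely from the functor $[M, -]$, which $\mu$ identifies with $- \otimes M^*$.)
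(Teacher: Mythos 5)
Your proof is correct. It rests on the same core technique as the paper's: identify $M^*$ with $[M,I]$ with its canonical coevaluation/evaluation (Proposition~\ref{prop:dual_is_hom}), and verify the comparison by transposing composites until no hom-objects remain, using naturality of evaluation and the map $\mu$ of Proposition~\ref{prop:mu_iso} together with the defining square of $\tilde{f}$.

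The organization differs in one respect worth noting. The paper's proof (Figure~\ref{fig:sym_mon_cotraces_are_traces}) is a single large commutative diagram with $\tr(\tilde{f})$ along one side and $\cotr(f)$ along the other, and it forms $\tr(\tilde{f})$ using the dual pair $(M^*, M^{**})$ with $M^{**} := [M^*, I]$ and its canonical structure; subdiagrams are then checked by exactly the transposition arguments you describe. You instead take the dual of $M^*$ to be $M$ itself, with the swapped structure $\eta' = s\circ\eta$, $\varepsilon' = \varepsilon\circ s$ of Remark~\ref{rmk:dual_pair_other_way}, which tacitly invokes the (standard, and asserted in the paper after Definition~\ref{defn:symmetric_monoidal_trace}; cf.\ Lemma~\ref{lem:cotrace_well_defined}) independence of the trace from the choice of dual data --- a harmless appeal, since the statement of the proposition does not fix a dual for $M^*$. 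What your choice buys is the avoidance of double duals and a reduction of both sides to one explicit hom-free composite $Q \to Q\otimes M^*\otimes M \to M^*\otimes P\otimes M \to P$, so that the final comparison is a three-factor symmetric-coherence check; what the paper's choice buys is that no independence statement is needed and the whole verification lives inside one diagram. Your identification of the transposed cotrace with $\ev\circ(f\otimes 1_M)\circ(\mu\otimes 1_M)$, and the subsequent substitution $\mu^{-1}\circ f\circ\mu = s\circ\tilde{f}$, are exactly the computations the paper's subdiagrams encode, so the two arguments agree where it matters.
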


\begin{proof}
In the diagram of Figure~\ref{fig:sym_mon_cotraces_are_traces}, the left-hand side is $\tr(\tilde{f})$ and the right-hand side is $\cotr(f)$. As in a closed bicategory, if $M$ is dualizable its dual is isomorphic to $[M, I]$ (see Proposition~\ref{prop:dual_is_hom}), so we take $M^*$ to be $[M, I]$, with coevaluation and evaluation
\[\begin{tikzcd}
	\eta : I
		\arrow[r, "\overline{l}"]
	& \left[M, M\right]
		\arrow[r, "\mu^{-1}", "\cong"']
	& M \otimes \left[M, I\right]
\end{tikzcd} \qquad\text{and}\qquad
\varepsilon : \left[M, I\right] \otimes M \xrightarrow{\ev} I.
\]
We let $M^{**}$ be $[M^*, I]$ and define the coevaluation $\eta'$ and evaluation $\varepsilon'$ similarly. To verify that a subdiagram of Figure~\ref{fig:sym_mon_cotraces_are_traces} commutes (if it is not for a straightforward reason like properties of symmetric monoidal categories or naturality of $\mu$), the simplest approach is, as usual, to take transposes of both sides until no hom-objects remain in the target.
\end{proof}

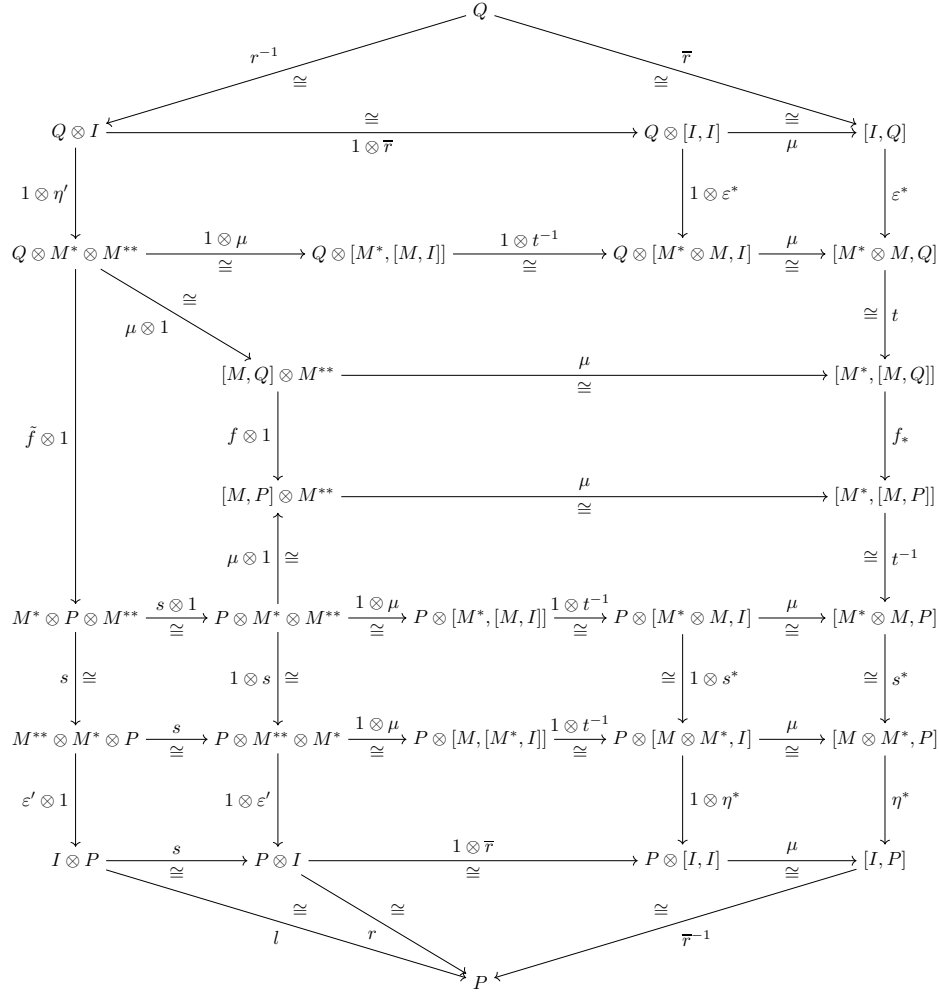
\begin{figure}[h]
	\centering
	\resizebox{\textwidth}{!}{
		\begin{tikzpicture}[xscale=3.75, yscale=2.25]
	\node(A) at (2, 8){$Q$};
	\node(B) at (0, 7){$Q \otimes I$};
	\node(C) at (3, 7){$Q \otimes \left[I, I\right]$};
	\node(D) at (4, 7){$\left[I, Q\right]$};
	\node(E) at (0, 6){$Q \otimes M^* \otimes M^{**}$};
	\node(F) at (1.5, 6){$Q \otimes \left[M^*, \left[M, I\right]\right]$};
	\node(G) at (3, 6){$Q \otimes \left[M^* \otimes M, I\right]$};
	\node(H) at (4, 6){$\left[M^* \otimes M, Q\right]$};
	\node(I) at (1, 5){$\left[M, Q\right] \otimes M^{**}$};
	\node(J) at (4, 5){$\left[M^*, \left[M, Q\right]\right]$};
	\node(M) at (1, 4){$\left[M, P\right] \otimes M^{**}$};
	\node(N) at (4, 4){$\left[M^*, \left[M, P\right]\right]$};
	\node(K) at (0, 3){$M^* \otimes P \otimes M^{**}$};
	\node(L) at (1, 3){$P \otimes M^* \otimes M^{**}$};
	\node(O) at (2, 3){$P \otimes \left[M^*, \left[M, I\right]\right]$};
	\node(P) at (3, 3){$P \otimes \left[M^* \otimes M, I\right]$};
	\node(Q) at (4, 3){$\left[M^* \otimes M, P\right]$};
	\node(R) at (0, 2){$M^{**} \otimes M^* \otimes P$};
	\node(S) at (1, 2){$P \otimes M^{**} \otimes M^*$};
	\node(T) at (2, 2){$P \otimes \left[M, \left[M^*, I\right]\right]$};
	\node(U) at (3, 2){$P \otimes \left[M \otimes M^*, I\right]$};
	\node(V) at (4, 2){$\left[M \otimes M^*, P\right]$};
	\node(W) at (0, 1){$I \otimes P$};
	\node(X) at (1, 1){$P \otimes I$};
	\node(Y) at (3, 1){$P \otimes \left[I, I\right]$};
	\node(Z) at (4, 1){$\left[I, P\right]$};
	\node(A1) at (2, 0){$P$};

	\draw [->] (A) -- node[above left]{$r^{-1}$} node[below right]{$\cong$} (B);
	\draw [->] (A) -- node[above right]{$\overline{r}$} node[below left]{$\cong$} (D);
	\draw [->] (B) -- node[below]{$1 \otimes \overline{r}$} node[above]{$\cong$} (C);
	\draw [->] (C) -- node[below]{$\mu$} node[above]{$\cong$} (D);
	\draw [->] (B) -- node[left]{$1 \otimes \eta'$} (E);
	\draw [->] (E) -- node[above]{$1 \otimes \mu$} node[below]{$\cong$} (F);
	\draw [->] (F) -- node[above]{$1 \otimes t^{-1}$} node[below]{$\cong$} (G);
	\draw [->] (C) -- node[right]{$1 \otimes \varepsilon^*$} (G);
	\draw [->] (G) -- node[above]{$\mu$} node[below]{$\cong$} (H);
	\draw [->] (D) -- node[right]{$\varepsilon^*$} (H);
	\draw [->] (E) -- node[below left]{$\mu \otimes 1$} node[above right]{$\cong$} (I);
	\draw [->] (I) -- node[above]{$\mu$} node[below]{$\cong$} (J);
	\draw [->] (H) -- node[right]{$t$} node[left]{$\cong$} (J);
	\draw [->] (E) -- node[left]{$\tilde{f} \otimes 1$} (K);
	\draw [->] (K) -- node[above]{$s \otimes 1$} node[below]{$\cong$} (L);
	\draw [->] (L) -- node[left]{$\mu \otimes 1$} node[right]{$\cong$} (M);
	\draw [->] (I) -- node[left]{$f \otimes 1$} (M);
	\draw [->] (M) -- node[above]{$\mu$} node[below]{$\cong$} (N);
	\draw [->] (J) -- node[right]{$f_*$} (N);
	\draw [->] (K) -- node[left]{$s$} node[right]{$\cong$} (R);
	\draw [->] (R) -- node[above]{$s$} node[below]{$\cong$} (S);
	\draw [->] (L) -- node[left]{$1 \otimes s$} node[right]{$\cong$} (S);
	\draw [->] (L) -- node[above]{$1 \otimes \mu$} node[below]{$\cong$} (O);
	\draw [->] (O) -- node[above]{$1 \otimes t^{-1}$} node[below]{$\cong$} (P);
	\draw [->] (P) -- node[above]{$\mu$} node[below]{$\cong$} (Q);
	\draw [->] (N) -- node[right]{$t^{-1}$} node[left]{$\cong$} (Q);
	\draw [->] (S) -- node[above]{$1 \otimes \mu$} node[below]{$\cong$} (T);
	\draw [->] (T) -- node[above]{$1 \otimes t^{-1}$} node[below]{$\cong$} (U);
	\draw [->] (P) -- node[right]{$1 \otimes s^*$} node[left]{$\cong$} (U);
	\draw [->] (U) -- node[above]{$\mu$} node[below]{$\cong$} (V);
	\draw [->] (Q) -- node[right]{$s^*$} node[left]{$\cong$} (V);
	\draw [->] (R) -- node[left]{$\varepsilon' \otimes 1$} (W);
	\draw [->] (W) -- node[above]{$s$} node[below]{$\cong$} (X);
	\draw [->] (S) -- node[left]{$1 \otimes \varepsilon'$} (X);
	\draw [->] (X) -- node[above]{$1 \otimes \overline{r}$} node[below]{$\cong$} (Y);
	\draw [->] (U) -- node[right]{$1 \otimes \eta^*$} (Y);
	\draw [->] (Y) -- node[above]{$\mu$} node[below]{$\cong$} (Z);
	\draw [->] (V) -- node[right]{$\eta^*$} (Z);
	\draw [->] (W) -- node[below left]{$l$} node[above right]{$\cong$} (A1);
	\draw [->] (X) -- node[below left]{$r$} node[above right]{$\cong$} (A1);
	\draw [->] (Z) -- node[below right]{$\overline{r}^{-1}$} node[above left]{$\cong$} (A1);
\end{tikzpicture}
	}
	\caption{Diagram for Proposition~\ref{prop:sym_mon_cotraces_are_traces}}
	\label{fig:sym_mon_cotraces_are_traces}
\end{figure}

In a bicategory, however, traces and cotraces truly are different, for the simple reason that shadows and coshadows are different; Hochschild homology and cohomology, for example, are not the same thing.

\section{Properties of cotrace}
\label{sec:cotrace_basic_properties}

The cotrace has properties analogous to those of the bicategorical trace, which are catalogued in Section 7 of \cite{Shadows_and_traces}. Some of the diagrams proving these properties get quite large, so to make them a bit more manageable we sometimes omit the symbol $\odot$. We adopt the convention that $\triangleleft$ and $\triangleright$ bind more loosely than composition by juxtaposition, e.g.~$AB \triangleright C$ is to be understood as $(A \odot B) \triangleright C$, not $A \odot (B \triangleright C)$.

The following is analogous to \cite[Proposition~7.1]{Shadows_and_traces}:

\begin{prop}
\label{prop:cotrace_tightening}
Let $M$ be a right dualizable 1-cell, let $f : M \triangleright Q \to P \triangleleft M$ be a 2-cell, and let $g : Q' \to Q$ and $h : P \to P'$ be 2-cells. Then
\[
	\lsh{h} \circ \cotr(f) \circ \lsh{g} = \cotr(h_* \circ f \circ g_*).
\]
\end{prop}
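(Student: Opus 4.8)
The plan is to split the claim into its two halves and chase the defining composite of Definition~\ref{defn:cotrace}. First I would show $\cotr(f)\circ\lsh{g}=\cotr(f\circ g_\ast)$ by sliding $\lsh{g}$ from left to right through the first few maps of the cotrace. Starting from $\lsh{g}:\lsh{Q'}\to\lsh{Q}$, naturality of $\overline{r}$ in its argument (Lemma~\ref{lem:hom_unit_isomorphisms}) gives $\lsh{\overline{r}}\circ\lsh{g}=\lsh{U_S\triangleright g}\circ\lsh{\overline{r}}$. Then one pushes $\lsh{U_S\triangleright g}$ past $\lsh{\varepsilon^\ast}$ and $\lsh{t}$: the first commutation is functoriality of $-\triangleright-$ as a functor of two variables (so $\varepsilon\triangleright g$ factors through either corner of the square), and the second is naturality of $t$ in its last slot with $M^\ast$ and $M$ held fixed. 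After these steps the front of the composite reads $\lsh{M^\ast\triangleright(M\triangleright g)}\circ\lsh{t}\circ\lsh{\varepsilon^\ast}\circ\lsh{\overline{r}}$, and since $\lsh{-}$ and $M^\ast\triangleright(-)$ are functors,
\[
	\lsh{f_\ast}\circ\lsh{M^\ast\triangleright(M\triangleright g)}=\lsh{(M^\ast\triangleright f)\circ(M^\ast\triangleright(M\triangleright g))}=\lsh{M^\ast\triangleright(f\circ g_\ast)}=\lsh{(f\circ g_\ast)_\ast}.
\]
The remaining maps $\theta,\lsh{t^{-1}},\lsh{\eta^\ast},\lsh{\overline{l}^{-1}}$ are untouched, so the entire composite is literally the cotrace of $f\circ g_\ast$.

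For the second half I would run the mirror-image argument from the other end, postcomposing with $\lsh{h}:\lsh{P}\to\lsh{P'}$: naturality of $\overline{l}$ moves $\lsh{h}$ past $\lsh{\overline{l}^{-1}}$, then two-variable functoriality of $-\triangleleft-$ and naturality of $t^{-1}$ (in its first slot) move it past $\lsh{\eta^\ast}$ and $\lsh{t^{-1}}$, and naturality of the coshadow's cyclicity isomorphism $\theta$ moves it past $\theta$. One arrives at $\lsh{M^\ast\triangleright(h\triangleleft M)}\circ\lsh{f_\ast}=\lsh{M^\ast\triangleright(h_\ast\circ f)}=\lsh{(h_\ast\circ f)_\ast}$ by functoriality again, giving $\lsh{h}\circ\cotr(f)=\cotr(h_\ast\circ f)$. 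Applying this with $f$ replaced by $f\circ g_\ast$ and combining with the first half yields $\lsh{h}\circ\cotr(f)\circ\lsh{g}=\cotr(h_\ast\circ f\circ g_\ast)$. I would present the whole thing as one commutative diagram with the row for $\cotr(f)$ on top, the row for $\cotr(h_\ast\circ f\circ g_\ast)$ on the bottom, vertical $\lsh{g}$-type maps on the left end and $\lsh{h}$-type maps on the right end, every square of which is one of the three elementary commutations above (naturality of a structure isomorphism, two-variable functoriality of $\triangleright$ or $\triangleleft$, or functoriality of $M^\ast\triangleright(-)$ together with $\lsh{-}$).

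No coherence pentagons or hexagons (Lemmas~\ref{lem:pentagon_a_and_t_1}--\ref{lem:hom_unit_iso_equality}) enter here; this is purely a naturality-plus-functoriality argument, which is why it is the direct analogue of \cite[Proposition~7.1]{Shadows_and_traces}. Accordingly, I do not expect any genuine obstacle, only bookkeeping: at each square one must confirm that the naturality invoked is with respect to the correct variable (e.g. $t$ is natural in all of $M^\ast$, $M$, and the third slot, but only naturality in the third slot is used), and that the induced map on hom-objects is the covariant $(-)_\ast$ rather than the contravariant $(-)^\ast$. Getting this bookkeeping right—especially distinguishing $g_\ast=M\triangleright g$ from the $(-)^\ast$ maps $\varepsilon^\ast,\eta^\ast$ that already appear in the cotrace—is the one place care is needed.
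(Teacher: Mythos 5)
Your proposal is correct and is essentially the paper's own argument: the paper proves this with a single commutative diagram (Figure~\ref{fig:CD_tightening}) whose squares commute exactly by the ingredients you list---functoriality of the internal homs and naturality of $\theta$, $t$, $\overline{r}$, and $\overline{l}$---which is the same naturality-chase you describe, merely organized in one diagram rather than split into the $g$- and $h$-halves.
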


\begin{proof}
The composite around the outside top, right, and bottom of Figure~\ref{fig:CD_tightening} is $\cotr(h_* \circ f \circ g_*)$. Each square commutes because of functoriality of the internal hom or naturality of $\theta$, $t$, $\overline{r}$, or $\overline{l}$.
\end{proof}

This equality is shown graphically in Figure~\ref{fig:SD_tightening}. A deformation between the two figures is easy to visualize: we simply slide the $g$ and $h$ nodes closer to $f$, contracting the $Q$ and $P$ strings and lengthening the $Q'$ and $P'$ strings.

\begin{figure}[h]
	\centering
	\begin{tabular}{m{3cm}m{20mm}m{3cm}}
	\begin{tikzpicture}
		\bgcylinder{0,0}{7.0}{1.5}{.4}{color1}{color1}

		\coordinate (f) at (1.5, 4.1);
		\node (g) at (1.2, 5.8) {};
		\node (h) at (1.8, 0.4) {};
		\path (f) -- ++(-0.3, 0.5) coordinate (ful);
		\path (f) -- ++(0.3, 0.5) coordinate (fur);
		\path (f) -- ++(-0.3, -0.5) coordinate (fdl);
		\path (f) -- ++(0.3, -0.5) coordinate (fdr);

		\begin{pgfonlayer}{foreground}
			\drawtheta{(f)}{-1}{1}{th}{}
		\end{pgfonlayer}

		\def\ecs{0.2} 
		\filldraw[color2fill] (f) to[out=-135, in=90] (fdl) -- (fdl |- thL) -- ++(0, -0.5) -- node[ed,pos=1]{$M$} ++(0, -\ecs) arc (0:-180:0.3) -- node[ed,pos=0]{$M^*$} ++(0, \ecs) to[out=90, in=-90] (thL) -- (thL') -- (thL' |- bot') -- (thR' |- bot') -- (thR') -- (thR) to[out=90, in=-90] ++(-0.6, 0.5) -- ($(fur) + (0.6, 0)$) -- node[ed,swap,pos=1]{$M^*$} ++(0, \ecs) arc(0:180:0.3) -- node[ed,swap,pos=0]{$M$} (fur) to[out=-90, in=45] (f);
		\draw[ds] (f) to[out=135, in=-90] (ful) -- node[ed]{$Q$} (g) -- node[ed]{$Q'$} (ful |- ul);
		\draw[ds] (f) to[out=-45, in=90] (fdr) -- node[ed]{$P$} (h) -- node[ed]{$P'$} (fdr |- bot);

		\node[vert] at (f) {$f$};
		\node[vert] at (g) {$g$};
		\node[vert] at (h) {$h$};
	\end{tikzpicture}
	& \begin{center} = \end{center} &
	\begin{tikzpicture}
		\bgcylinder{0,0}{7.0}{1.5}{.4}{color1}{color1}

		\node (f) at (1.5, 4.3) {};
		\path (f) -- ++(-0.3, 0.5) coordinate (ful) -- ++(0, 0.5) coordinate(g);
		\path (f) -- ++(0.3, 0.5) coordinate (fur);
		\path (f) -- ++(-0.3, -0.5) coordinate (fdl);
		\path (f) -- ++(0.3, -0.5) coordinate (fdr) -- ++(0, -0.5) coordinate(h);

		\begin{pgfonlayer}{foreground}
			\drawtheta{(f)}{-2.0}{1}{th}{}
		\end{pgfonlayer}

		\def\ecs{0.2} 
		\filldraw[color2fill] (f) to[out=-135, in=90] (fdl) -- (fdl |- thL) -- ++(0, -0.5) -- node[ed,pos=1]{$M$} ++(0, -\ecs) arc (0:-180:0.3) -- node[ed,pos=0]{$M^*$} ++(0, \ecs) to[out=90, in=-90] (thL) -- (thL') -- (thL' |- bot') -- (thR' |- bot') -- (thR') -- (thR) to[out=90, in=-90] ++(-0.6, 0.5) -- node[ed,swap,pos=1]{$M^*$} ++(0, 2.9) arc(0:180:0.3) -- node[ed,swap,pos=0]{$M$} (fur) to[out=-90, in=45] (f);
		\draw[ds] (f) to[out=135, in=-90] (ful) -- node[ed,pos=0]{$Q$} (g) -- node[ed]{$Q'$} (ful |- ul);
		\draw[ds] (f) to[out=-45, in=90] (fdr) -- node[ed,pos=0]{$P$} (h) -- node[ed]{$P'$} (fdr |- bot);

		\node[vert] at (f) {$f$};
		\node[vert] at (g) {$g$};
		\node[vert] at (h) {$h$};
	\end{tikzpicture}
\end{tabular}
	\caption{String diagram picture of Proposition~\ref{prop:cotrace_tightening}}
	\label{fig:SD_tightening}
\end{figure}

\begin{figure}[h]
	\centering
	\resizebox{\textwidth}{!}{
		\begin{tikzpicture}[xscale=3.5, yscale=2]
	\newcommand{\Xa}{0}
	\newcommand{\Xb}{\Xa+0.7}
	\newcommand{\Xc}{\Xb+1.0}
	\newcommand{\Xd}{\Xc+1.2}
	\newcommand{\Xe}{\Xd+0.8}
	\newcommand{\Xf}{\Xe+1.2}
	\newcommand{\Ya}{0}
	\newcommand{\Yb}{\Ya+1.0}
	\newcommand{\Yc}{\Yb+1.0}
	\newcommand{\Yd}{\Yc+1.0}
	\newcommand{\Ye}{\Yd+1.0}

	\node(A0) at (\Xa, \Ye){$\lsh{Q'}$};
	\node(B0) at (\Xb, \Ye){$\lsh{U_S \triangleright Q'}$};
	\node(C0) at (\Xc, \Ye){$\lsh{(M^* \odot M) \triangleright Q'}$};
	\node(D0) at (\Xd, \Ye){$\lsh{M^* \triangleright (M \triangleright Q')}$};
	\node(E0) at (\Xf, \Yc){$\lsh{M^* \triangleright (P' \triangleleft M)}$};
	\node(F0) at (\Xd, \Ya){$\lsh{(P' \triangleleft M) \triangleleft M^*}$};
	\node(G0) at (\Xc, \Ya){$\lsh{P' \triangleleft (M \odot M^*)}$};
	\node(H0) at (\Xb, \Ya){$\lsh{P' \triangleleft U_R}$};
	\node(I0) at (\Xa, \Ya){$\lsh{P'}$};

	\node(A1) at (\Xa, \Yd){$\lsh{Q}$};
	\node(B1) at (\Xb, \Yd){$\lsh{U_S \triangleright Q}$};
	\node(C1) at (\Xc, \Yd){$\lsh{(M^* \odot M) \triangleright Q}$};
	\node(D1) at (\Xd, \Yd){$\lsh{M^* \triangleright (M \triangleright Q)}$};
	\node(E1) at (\Xe, \Yc){$\lsh{M^* \triangleright (P \triangleleft M)}$};
	\node(F1) at (\Xd, \Yb){$\lsh{(P \triangleleft M) \triangleleft M^*}$};
	\node(G1) at (\Xc, \Yb){$\lsh{P \triangleleft (M \odot M^*)}$};
	\node(H1) at (\Xb, \Yb){$\lsh{P \triangleleft U_R}$};
	\node(I1) at (\Xa, \Yb){$\lsh{P}$};

	\draw [->] (A0) -- node[left]{$\lsh{g}$} (A1);
	\draw [->] (B0) -- node[left]{$\lsh{g_*}$} (B1);
	\draw [->] (C0) -- node[left]{$\lsh{g_*}$} (C1);
	\draw [->] (D0) -- node[left]{$\lsh{g_{**}}$} (D1);
	\draw [->] (E1) -- node[above]{$\lsh{h_{**}}$} (E0);
	\draw [->] (I1) -- node[left]{$\lsh{h}$} (I0);
	\draw [->] (H1) -- node[left]{$\lsh{h_*}$} (H0);
	\draw [->] (G1) -- node[left]{$\lsh{h_*}$} (G0);
	\draw [->] (F1) -- node[left]{$\lsh{h_{**}}$} (F0);
	\draw [->] (A1) -- node[left]{$\cotr(f)$} (I1);

	\draw [->] (A0) -- node[above]{$\lsh{\overline{r}}$} node[below]{$\cong$} (B0);
	\draw [->] (B0) -- node[above]{$\lsh{\varepsilon^*}$} (C0);
	\draw [->] (C0) -- node[above]{$\lsh{t}$} node[below]{$\cong$} (D0);
	\draw [->] (D0) -- node[above right]{$\lsh{(h_* \circ f \circ g_*)_*}$} (E0);
	\draw [->] (E0) -- node[below right]{$\lsh{\theta}$} node[above left]{$\cong$} (F0);
	\draw [->] (F0) -- node[below]{$\lsh{t^{-1}}$} node[above]{$\cong$} (G0);
	\draw [->] (G0) -- node[below]{$\lsh{\eta^*}$} (H0);
	\draw [->] (H0) -- node[below]{$\lsh{\overline{l}^{-1}}$} node[above]{$\cong$} (I0);

	\draw [->] (A1) -- node[above]{$\lsh{\overline{r}}$} node[below]{$\cong$} (B1);
	\draw [->] (B1) -- node[above]{$\lsh{\varepsilon^*}$} (C1);
	\draw [->] (C1) -- node[above]{$\lsh{t}$} node[below]{$\cong$} (D1);
	\draw [->] (D1) -- node[above right]{$\lsh{f_*}$} (E1);
	\draw [->] (E1) -- node[below right]{$\lsh{\theta}$} node[above left]{$\cong$} (F1);
	\draw [->] (F1) -- node[below]{$\lsh{t^{-1}}$} node[above]{$\cong$} (G1);
	\draw [->] (G1) -- node[below]{$\lsh{\eta^*}$} (H1);
	\draw [->] (H1) -- node[below]{$\lsh{\overline{l}^{-1}}$} node[above]{$\cong$} (I1);
\end{tikzpicture}
	}
	\caption{Diagram for Proposition~\ref{prop:cotrace_tightening}}
	\label{fig:CD_tightening}
\end{figure}

The following is analogous to \cite[Proposition~7.4]{Shadows_and_traces}:

\begin{prop}
\label{prop:cotrace_with_unit}
If $f : U_R \triangleright Q \to P \triangleleft U_R$ is any 2-cell, then $\cotr(f)$ is
\[\begin{tikzcd}
	\lsh{Q}
		\arrow[r, "\lsh{\overline{r}}", "\cong"']
	& \lsh{U_R \triangleright Q}
		\arrow[r, "\lsh{f}"]
	& \lsh{P \triangleleft U_R}
		\arrow[r, "\lsh{\overline{l}^{-1}}", "\cong"']
	& \lsh{P}
\end{tikzcd}\]
\end{prop}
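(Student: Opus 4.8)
The plan is to expand the definition of $\cotr(f)$ (Definition~\ref{defn:cotrace}) in the special case $M = U_R$ and collapse the resulting chain of eight maps down to the three maps in the statement. Since $U_R$ is right dual to itself, and by Lemma~\ref{lem:cotrace_well_defined} the cotrace does not depend on the chosen dual pair, I would take $M^* = U_R$ with $\varepsilon \colon U_R \odot U_R \to U_R$ the unitor isomorphism and $\eta \colon U_R \to U_R \odot U_R$ its inverse; recall that the left and right unitors of $U_R$ coincide, so $\varepsilon$ may be regarded as either $l$ or $r$ (and likewise $\eta$). In particular $\varepsilon^* = l^*$ and $\eta^* = (r^*)^{-1}$ in the notation of Lemma~\ref{lem:hom_unit_iso_compatibility_with_t_2}, and the triangle identities for this dual pair follow from bicategory coherence.

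First I would simplify the three maps preceding $\lsh{f_*}$ and the three maps following $\lsh{\theta}$. The first diagram of Lemma~\ref{lem:hom_unit_iso_compatibility_with_t_2}, with $X = U_R$ and $Y = Q$, gives $t \circ \varepsilon^* = \overline{r}$, so the first three maps of the cotrace compose to $\lsh{\overline{r}_{U_R \triangleright Q}} \circ \lsh{\overline{r}_Q}$. Symmetrically, the second diagram of that lemma, with $Y = P$ and $X = U_R$, gives $t \circ r^* = \overline{l}$, hence $\eta^* \circ t^{-1} = (t \circ r^*)^{-1} = \overline{l}^{-1}$, so the last three maps compose to $\lsh{\overline{l}_P^{-1}} \circ \lsh{\overline{l}_{P \triangleleft U_R}^{-1}}$. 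At this point
\[
\cotr(f) = \lsh{\overline{l}_P^{-1}} \circ \lsh{\overline{l}_{P \triangleleft U_R}^{-1}} \circ \theta \circ \lsh{U_R \triangleright f} \circ \lsh{\overline{r}_{U_R \triangleright Q}} \circ \lsh{\overline{r}_Q},
\]
where $\theta = \theta_{U_R,\, P \triangleleft U_R}$. Next I would apply the triangle axiom for the coshadow (the second diagram of Definition~\ref{defn:coshadow}) with its ``$M$'' taken to be $P \triangleleft U_R$, which yields $\lsh{\overline{l}_{P \triangleleft U_R}^{-1}} \circ \theta = \lsh{\overline{r}_{P \triangleleft U_R}^{-1}}$. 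Every map is now $\lsh{-}$ of a 2-cell, so $\cotr(f)$ equals $\lsh{-}$ applied to
\[
Q \xrightarrow{\overline{r}_Q} U_R \triangleright Q \xrightarrow{\overline{r}_{U_R \triangleright Q}} U_R \triangleright (U_R \triangleright Q) \xrightarrow{U_R \triangleright f} U_R \triangleright (P \triangleleft U_R) \xrightarrow{\overline{r}_{P \triangleleft U_R}^{-1}} P \triangleleft U_R \xrightarrow{\overline{l}_P^{-1}} P.
\]
Finally, naturality of $\overline{r}$ applied to $f$ gives $(U_R \triangleright f) \circ \overline{r}_{U_R \triangleright Q} = \overline{r}_{P \triangleleft U_R} \circ f$, so the three middle maps telescope to $f$; the composite is $\overline{l}_P^{-1} \circ f \circ \overline{r}_Q$, and applying the functor $\lsh{-}$ gives exactly $\lsh{\overline{l}^{-1}} \circ \lsh{f} \circ \lsh{\overline{r}}$.

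I expect the only real friction to be bookkeeping: confirming that $\varepsilon^*$ and $\eta^*$ for the self-dual $U_R$ are indeed the maps $l^*$ and $(r^*)^{-1}$ appearing in Lemma~\ref{lem:hom_unit_iso_compatibility_with_t_2} (which relies on the equality of the two unitors of $U_R$), identifying the $\theta$ in the cotrace with the instance of $\theta$ in the coshadow triangle axiom, and tracking the subscripts on the various copies of $\overline{r}$ and $\overline{l}$. All of this is routine. One could alternatively run the argument pictorially: the string diagram for $\cotr(f)$ is Figure~\ref{fig:bicat_cotrace} with the $M$-string erased (since $M$ is a unit), and it visibly deforms into the diagram for the right-hand composite; but the computation above has the advantage of not leaning on the informal string-diagram calculus.
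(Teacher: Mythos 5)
Your proposal is correct and takes essentially the same approach as the paper: unfold Definition~\ref{defn:cotrace} for the self-dual pair $(U_R, U_R)$ (licensed by Lemma~\ref{lem:cotrace_well_defined}) and collapse the composite using Lemma~\ref{lem:hom_unit_iso_compatibility_with_t_2} together with the unit axiom for the coshadow. The only cosmetic difference is in the middle cancellation: the paper's diagram identifies $\overline{r}$ with $\overline{r}_*$ via Lemma~\ref{lem:hom_unit_iso_equality}, whereas you cancel $\overline{r}_{U_R \triangleright Q}$ against $\overline{r}_{P \triangleleft U_R}^{-1}$ by naturality of $\overline{r}$ at $f$, which works just as well.
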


\begin{proof}
In the diagram in Figure~\ref{fig:CD_cotrace_with_unit}, the map around the outside from $\lsh{Q}$ to $\lsh{P}$ is $\cotr(f)$. Note that two of the arrows are labeled two different ways, making use of Lemma~\ref{lem:hom_unit_iso_equality}.
\end{proof}

A string diagram picture for this equality would be tautological, since we do not draw unit 1-cells.

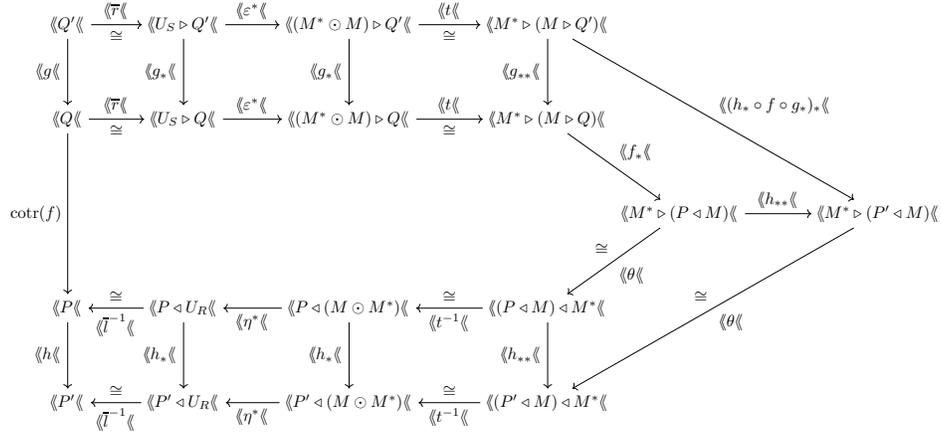
\begin{figure}[h]
	\centering
	\resizebox{\textwidth}{!}{
		\begin{tikzpicture}[xscale=3.25, yscale=2.25]
	\newcommand{\Xa}{0}
	\newcommand{\Xb}{\Xa+1.0}
	\newcommand{\Xc}{\Xb+1.0}
	\newcommand{\Xd}{\Xc+1.0}
	\newcommand{\Xe}{\Xd+1.0}
	\newcommand{\Ya}{0}
	\newcommand{\Yb}{\Ya+1.0}
	\newcommand{\Yc}{\Yb+1.0}
	\newcommand{\Yd}{\Yc+1.0}
	\newcommand{\Ye}{\Yd+1.0}

	\node(A) at (\Xa, \Yd){$\lsh{Q}$};
	\node(B) at (\Xb, \Yd){$\lsh{U_R \triangleright Q}$};
	\node(C) at (\Xc, \Ye){$\lsh{(U_R \odot U_R) \triangleright Q}$};
	\node(D) at (\Xd, \Yd){$\lsh{U_R \triangleright (U_R \triangleright Q)}$};
	\node(E) at (\Xe, \Yc){$\lsh{U_R \triangleright (P \triangleleft U_R)}$};
	\node(F) at (\Xd, \Yb){$\lsh{(P \triangleleft U_R) \triangleleft U_R}$};
	\node(G) at (\Xc, \Ya){$\lsh{P \triangleleft (U_R \odot U_R)}$};
	\node(H) at (\Xb, \Yb){$\lsh{P \triangleleft U_R}$};
	\node(I) at (\Xa, \Yb){$\lsh{P}$};
	\node(J) at (\Xc, \Yc){$\lsh{P \triangleleft U_R}$};

	\node at (barycentric cs:D=1,J=1){\small Lemma \ref{lem:hom_unit_isomorphisms}};
	\node at (barycentric cs:F=1,J=1){\small Definition \ref{defn:coshadow}};

	\draw [->] (A) -- node[above]{$\lsh{\overline{r}}$} node[below]{$\cong$} (B);
	\draw [->] (B) -- node[above left]{$\lsh{r^*}$} node[below right]{$\cong$} (C);
	\draw [->] (C) -- node[above right]{$\lsh{t}$} node[below left]{$\cong$} (D);
	\draw [->] (D) -- node[above right]{$\lsh{f_*}$} (E);
	\draw [->] (E) -- node[below right]{$\lsh{\theta}$} node[above left ]{$\cong$} (F);
	\draw [->] (F) -- node[below right]{$\lsh{t^{-1}}$} node[above left]{$\cong$} (G);
	\draw [->] (G) -- node[below left]{$\lsh{(l^{-1})^*}$} node[above right]{$\cong$} (H);
	\draw [->] (H) -- node[below]{$\lsh{\overline{l}^{-1}}$} node[above]{$\cong$} (I);
	\draw [->] (B) -- node[below]{$\lsh{\overline{r}_*} = \lsh{\overline{r}}$} node[above]{$\cong$} (D);
	\draw [->] (F) -- node[above]{$\lsh{\overline{l}^{-1}} = \lsh{(\overline{l}_*)^{-1}}$} node[below]{$\cong$} (H);
	\draw [->] (B) -- node[below left]{$\lsh{f}$} (J);
	\draw [->] (J) -- node[above left]{$1$} (H);
	\draw [->] (J) -- node[above]{$\lsh{\overline{r}}$} node[below]{$\cong$} (E);

	\node at (barycentric cs:B=1,C=1,D=1){Lemma \ref{lem:hom_unit_iso_compatibility_with_t}};
	\node at (barycentric cs:F=1,G=1,H=1){Lemma \ref{lem:hom_unit_iso_compatibility_with_t}};
\end{tikzpicture}
	}
	\caption{Diagram for Proposition~\ref{prop:cotrace_with_unit}}
	\label{fig:CD_cotrace_with_unit}
\end{figure}

The following is analogous to Theorem~\ref{thm:composite_of_dual_pairs_traces}, which is \cite[Proposition~7.5]{Shadows_and_traces}; it says that a composite of cotraces with respect to dualizable objects $M$ and $N$ is the same as a single cotrace with respect to $N \odot M$ (which is dualizable by Proposition~\ref{prop:composite_of_dual_pairs}).

\begin{prop}
\label{prop:cotrace_composite_of_dual_pairs}
Let $f : M \triangleright Q \to P \triangleleft M$ and $g : N \triangleright P \to L \triangleleft N$, where $M$ and $N$ are right dualizable. Then the composite $\cotr(g) \circ \cotr(f) : \lsh{Q} \to \lsh{L}$ is equal to the cotrace (with respect to $N \odot M$) of the composite
\begin{multline}
\label{eq:cotrace_composite_of_dual_pairs}
	(N \odot M) \triangleright Q \underset{\cong}{\xrightarrow{t}} N \triangleright (M \triangleright Q) \xrightarrow{f_*} N \triangleright (P \triangleleft M) \underset{\cong}{\xrightarrow{a^{-1}}} (N \triangleright P) \triangleleft M \\
	\xrightarrow{g_*} (L \triangleleft N) \triangleleft M \underset{\cong}{\xrightarrow{t^{-1}}} L \triangleleft (N \odot M).
\end{multline}
\end{prop}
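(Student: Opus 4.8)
The plan is to prove the identity by an explicit commutative diagram, just as in the proofs of Propositions~\ref{prop:cotrace_tightening} and~\ref{prop:cotrace_with_unit}. By Proposition~\ref{prop:composite_of_dual_pairs}, $N \odot M$ is right dualizable with right dual $M^* \odot N^*$, with coevaluation $\eta_N$ followed by $1 \odot \eta_M \odot 1$ and evaluation $1 \odot \varepsilon_N \odot 1$ followed by $\varepsilon_M$; by Lemma~\ref{lem:cotrace_well_defined} we may use this particular dual pair to compute the cotrace with respect to $N \odot M$. Substituting these formulas, and the definition of the composite~(\ref{eq:cotrace_composite_of_dual_pairs}), into Definition~\ref{defn:cotrace} expands that cotrace into an explicit string of maps between coshadows; the other side, $\cotr(g) \circ \cotr(f)$, is likewise an explicit string of such maps, obtained by concatenating two instances of Definition~\ref{defn:cotrace} (note that $\cotr(f)$ terminates at $\lsh{P}$ and $\cotr(g)$ begins there). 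The body of the proof is a commutative diagram interpolating between these two composites.

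I would organize that diagram into four kinds of region. First, regions that commute by functoriality of $- \triangleright -$ and $- \triangleleft -$ or by naturality of $t$, $a$, $\theta$, $\overline{r}$ and $\overline{l}$; these let the nodes $f_*$ and $g_*$ slide past the structural isomorphisms. Second, regions handled by the associativity pentagons of Lemmas~\ref{lem:pentagon_a_and_t_1} and~\ref{lem:pentagon_a_and_t_2}: expanding the cotrace of~(\ref{eq:cotrace_composite_of_dual_pairs}) produces the ``long'' tensor--hom isomorphism $((M^* \odot N^*) \odot (N \odot M)) \triangleright Q \cong M^* \triangleright (N^* \triangleright (N \triangleright (M \triangleright Q)))$, together with a dual isomorphism on the $\triangleleft$ side, and these pentagons identify those with the iterate of single-step $t$'s that $\cotr(g) \circ \cotr(f)$ produces one step at a time. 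Third, the central region, where the single cyclicity isomorphism $\theta$ over the composite dual $M^* \odot N^*$ must be resolved into the $\theta$ over $M^*$ coming from $\cotr(f)$ followed by the $\theta$ over $N^*$ coming from $\cotr(g)$; this is exactly what the coshadow hexagon of Definition~\ref{defn:coshadow} supplies. Fourth, the regions near the ``seam'', where $\cotr(g) \circ \cotr(f)$ detours through $\lsh{P}$ via $\lsh{\overline{l}^{-1}}$ and then $\lsh{\overline{r}}$ while the cotrace of~(\ref{eq:cotrace_composite_of_dual_pairs}) does not; these I would handle with the unit-compatibility Lemmas~\ref{lem:hom_unit_iso_compatibility} and~\ref{lem:hom_unit_iso_equality} and the coshadow unit-triangle axiom, which together absorb the detour into the surrounding reparenthesizations.

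I expect the seam to be the main obstacle --- showing that the passage through $\lsh{P}$ really is invisible. This is the cotrace analogue of the fact that the trace statement, Theorem~\ref{thm:composite_of_dual_pairs_traces}, is proved by gluing two trace string diagrams along the $P$-string and straightening it, leaving no node at the junction; here the straightening must instead be realized by an honest commuting pasting of the unitors and the coshadow axioms, since the composite dual $M^* \odot N^*$ and the deeply nested hom-objects lie outside the string-diagram calculus of Section~\ref{sec:closed_bicategories}. In practice I would first draw the guiding deformation --- glue two copies of Figure~\ref{fig:bicat_cotrace} along the $P$-string, contract it to a point, and observe that what remains is the cotrace picture for $N \odot M$ --- and use it as a template for laying out the commutative diagram. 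If the one-shot chase proves unwieldy, a natural intermediate step is to treat the $M$-dual and $N$-dual in two stages: first verify the subdiagram involving only $\eta_M$, $\varepsilon_M$, the $\theta$ over $M^*$, and the $t$'s that split off $M^*$, thereby reducing to a statement in which only the $N$-dual and the hexagon remain in play.
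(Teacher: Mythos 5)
Your proposal is correct and follows essentially the same route as the paper: the paper's proof is exactly such an interpolating commutative diagram (Figure~\ref{fig:CD_composite_of_dual_pairs}), built from the composite dual pair of Proposition~\ref{prop:composite_of_dual_pairs}, with the common initial map $\lsh{\varepsilon^*\overline{r}}$ and final map $\lsh{\overline{l}^{-1}\eta^*}$ stripped off and the interior filled by functoriality of $-\triangleright-$ and $-\triangleleft-$, naturality of $\theta$, $t$, $a$, $\overline{r}$, $\overline{l}$, and the coshadow and pentagon/unit coherence lemmas you cite, guided by the same string-diagram deformation (Figure~\ref{fig:SD_composite_of_dual_pairs}). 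Your identification of the coshadow hexagon as the device splitting the single $\theta$ over $M^* \odot N^*$, and of the unit lemmas as absorbing the detour through $\lsh{P}$, matches how the paper's diagram is organized.
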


\begin{proof}
The left and bottom sides of the diagram in Figure~\ref{fig:CD_composite_of_dual_pairs} are $\cotr(g) \circ \cotr(f)$, and the top and right sides are the cotrace of (\ref{eq:cotrace_composite_of_dual_pairs}), with the exception that we have deleted the maps $\lsh{\varepsilon^* \overline{r}} : \lsh{Q} \to \lsh{(M^* \odot M) \triangleright P}$ at the beginning and $\lsh{\overline{l}^{-1} \eta^*} : \lsh{L \triangleleft (N \odot N^*)} \to \lsh{L}$ at the end, since these are common to both sides. Every unlabeled square in the diagram commutes because of functoriality of $- \triangleright -$ and $- \triangleleft -$ or naturality of $\theta$, $t$, $a$, $\overline{r}$, or $\overline{l}$.
\end{proof}

\begin{figure}[h]
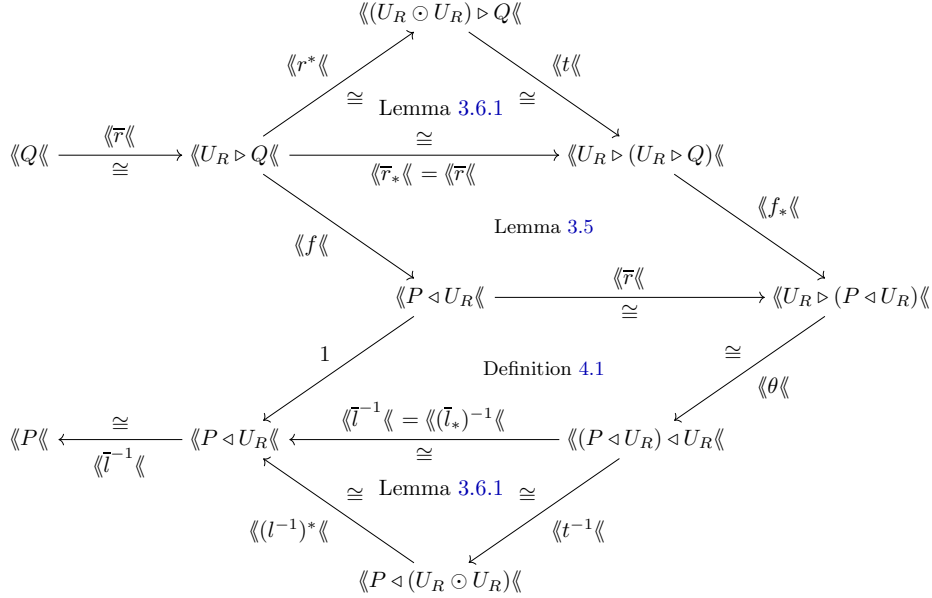

	\centering
	\resizebox{!}{\dimexpr\textheight-0.26in}{\rotatebox{90}{
		\input diagram_composite_of_dual_pairs.tex
	}}
	\caption{Diagram for Proposition~\ref{prop:cotrace_composite_of_dual_pairs}}
	\label{fig:CD_composite_of_dual_pairs}
\end{figure}

This equality is shown graphically in Figure~\ref{fig:SD_composite_of_dual_pairs}; again, the deformation should be fairly easy to visualize.

\begin{figure}[h]
	\centering
	\begin{tabular}{m{36mm}m{20mm}m{48mm}}
	\begin{tikzpicture}
		\bgcylinder{0,0}{9.4}{1.8}{.4}{color1}{color1}

		\coordinate (f) at (1.5, 7.6) {};
		\node (g) at (2.1, 3.1) {};
		\path (f) -- ++(-0.3, 0.5) coordinate (ful);
		\path (f) -- ++(0.3, 0.5) coordinate (fur);
		\path (f) -- ++(-0.3, -0.5) coordinate (fdl);
		\path (f) -- ++(0.3, -0.5) coordinate (fdr);
		\path (g) -- ++(-0.3, 0.5) coordinate (gul);
		\path (g) -- ++(0.3, 0.5) coordinate (gur);
		\path (g) -- ++(-0.3, -0.5) coordinate (gdl);
		\path (g) -- ++(0.3, -0.5) coordinate (gdr);

		\begin{pgfonlayer}{foreground}
			\drawtheta{(f)}{-1.1}{1}{th}{}
			\drawtheta{(g)}{-0.6}{1}{th2}{}
		\end{pgfonlayer}

		\def\ecs{0.1} 
		\filldraw[color2fill] (f) to[out=-135, in=90] (fdl) -- (fdl |- thL) -- ++(0, -0.5) -- node[ed]{$M$} ++(0, -\ecs) arc(0:-180:0.3) -- node[ed]{$M^*$} ++(0, \ecs) to[out=90, in=-90] (thL) -- (thL') -- (thL' |- bot') -- (thR' |- bot') -- (thR') -- (thR) to[out=90, in=-90] ++(-1.2, 1.0) -- ($(fur) + (0.6, 0)$) -- node[ed,swap]{$M^*$} ++(0, \ecs) arc (0:180:0.3) -- node[ed,swap]{$M$} (fur) to[out=-90, in=45] (f);
		\filldraw[color3fill] (g) to[out=-135, in=90] (gdl) -- (gdl |- th2L) -- ++(0, -1.0) -- node[ed]{$N$} ++(0, -\ecs) arc(0:-180:0.3) -- node[ed]{$N^*$} ++(0, \ecs) to[out=90, in=-90] (th2L) -- (th2L') -- (th2L' |- bot') -- (th2R' |- bot') -- (th2R') -- (th2R) to[out=90, in=-90] ++(-0.6, 0.5) -- ($(gur) + (0.6, 0)$) -- node[ed,swap]{$N^*$} ++(0, \ecs) arc (0:180:0.3) -- node[ed,swap]{$N$} (gur) to[out=-90, in=45] (g);
		\draw[ds] (f) to[out=135, in=-90] (ful) -- node[ed]{$Q$} (ful |- ul);
		\draw[ds] (f) to[out=-45, in=90] (fdr) -- node[ed]{$P$} (gul) to[out=-90, in=135] (g);
		\draw[ds] (g) to[out=-45, in=90] (gdr) -- node[ed,near end]{$L$} (gdr |- bot);

		\node[vert] at (f) {$f$};
		\node[vert] at (g) {$g$};
	\end{tikzpicture}
	& \begin{center} = \end{center} &
	\begin{tikzpicture}
		\bgcylinder{0,0}{9.4}{2.4}{.4}{color1}{color1}

		\node (f) at (2.1, 6.55) {};
		\path (f) -- ++(0.6, -1.5) coordinate(g);
		\path (f) -- ++(-0.3, 0.5) coordinate (ful);
		\path (f) -- ++(0.3, 0.5) coordinate (fur);
		\path (f) -- ++(-0.3, -0.5) coordinate (fdl);
		\path (f) -- ++(0.3, -0.5) coordinate (fdr);
		\path (g) -- ++(-0.3, 0.5) coordinate (gul);
		\path (g) -- ++(0.3, 0.5) coordinate (gur);
		\path (g) -- ++(-0.3, -0.5) coordinate (gdl);
		\path (g) -- ++(0.3, -0.5) coordinate (gdr);

		\begin{pgfonlayer}{foreground}
			\drawtheta{(g)}{-1.1}{1}{th}{}
			\drawtheta{(g)}{-1.9}{1}{th2}{}
		\end{pgfonlayer}

		\def\ecs{0.2} 
		\filldraw[color2fill] (f) to[out=-135, in=90] (fdl) -- (fdl |- thL) -- (fdl |- th2L) -- ++(0, -0.5) -- node[ed,pos=1]{$M$} ++(0, -\ecs) arc(0:-180:0.3) -- node[ed,pos=0]{$M^*$} ($(thL) + (1.2, -1.0)$) to[out=90, in=-90] (thL) -- (thL') -- (thL' |- bot') -- (thR' |- bot') -- (thR') -- (thR) to[out=90, in=-90] ++(-0.6, 0.5) -- ($(fur) + (1.8, 0)$) -- node[ed,swap,pos=1]{$M^*$} ++(0, \ecs) arc (0:180:0.9) -- node[ed,swap,pos=0]{$M$} (fur) to[out=-90, in=45] (f);
		\filldraw[color3fill] (g) to[out=-135, in=90] (gdl) -- (gdl |- th2L) -- ++(0, -0.5) -- node[ed,pos=1]{$N$} ++(0, -\ecs) arc(0:-180:0.9) -- node[ed,pos=0]{$N^*$} ++(0, \ecs) to[out=90, in=-90] (th2L) -- (th2L') -- (th2L' |- bot') -- (th2R' |- bot') -- (th2R') -- (th2R) to[out=90, in=-90] ++(-1.2, 1.0) -- ($(fur) + (1.2, 0)$) -- node[ed,swap,pos=1]{$N^*$} ++(0, \ecs) arc (0:180:0.3) -- node[ed,swap,pos=0]{$N$} (gur) to[out=-90, in=45] (g);
		\draw[ds] (f) to[out=135, in=-90] (ful) -- node[ed]{$Q$} (ful |- ul);
		\draw[ds] (f) to[out=-45, in=90] (fdr) -- node[ed]{$P$} (gul) to[out=-90, in=135] (g);
		\draw[ds] (g) to[out=-45, in=90] (gdr) -- node[ed,near end]{$L$} (gdr |- bot);

		\node[vert] at (f) {$f$};
		\node[vert] at (g) {$g$};
	\end{tikzpicture}
\end{tabular}
	\caption{String diagram picture of Proposition~\ref{prop:cotrace_composite_of_dual_pairs}}
	\label{fig:SD_composite_of_dual_pairs}
\end{figure}

If $(M, M^*)$ is a dual pair, a 2-cell $f : M \triangleright Q \to P \triangleleft M$ has a mate $f^* : Q \triangleleft M^* \to M^* \triangleright P$ (Definition~\ref{defn:mate_2}), which has the same cotrace as $f$:

\begin{prop}
\label{prop:cotrace_of_mate}
If $M$ is right dualizable and $f : M \triangleright Q \to P \triangleleft M$ is a 2-cell, then $\cotr(f) = \cotr(f^*)$.
\end{prop}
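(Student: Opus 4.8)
The plan is to imitate the proof of the analogous statement for traces, Proposition~\ref{prop:trace_of_mate}, by building a (large) commutative diagram whose two outer legs are $\cotr(f)$ and $\cotr(f^*)$. Here $\cotr(f^*)$ must be read via the left-handed counterpart of Definition~\ref{defn:cotrace}: because $(M, M^*)$ is a dual pair, $M^*$ is left dualizable with left dual $M$, and a 2-cell of the shape $Q \triangleleft M^* \to M^* \triangleright P$ acquires a cotrace by running Definition~\ref{defn:cotrace} ``in the mirror'' --- with $\triangleright$ and $\triangleleft$, and $\overline{r}$ and $\overline{l}$, interchanged --- using the same dual-pair data $\eta, \varepsilon$; its independence of the choices of $M^*$, $\eta$, $\varepsilon$ is the mirror of Lemma~\ref{lem:cotrace_well_defined}.

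First I would write the two composites out in full and observe that they share a common tail
\[
\lsh{M^* \triangleright (P \triangleleft M)} \xrightarrow{\theta} \lsh{(P \triangleleft M) \triangleleft M^*} \xrightarrow{\lsh{t^{-1}}} \lsh{P \triangleleft (M \odot M^*)} \xrightarrow{\lsh{\eta^*}} \lsh{P \triangleleft U_R} \xrightarrow{\lsh{\overline{l}^{-1}}} \lsh{P},
\]
so it is enough to identify the two ``heads'' $\lsh{Q} \to \lsh{M^* \triangleright (P \triangleleft M)}$, namely $\lsh{f_*} \circ \lsh{t} \circ \lsh{\varepsilon^*} \circ \lsh{\overline{r}}$ and $\lsh{a} \circ \lsh{(f^*)_*} \circ \lsh{t} \circ \lsh{\varepsilon^*} \circ \lsh{\overline{l}}$. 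Each head is $\lsh{-}$ applied to a composite of 2-cells lying in a single category $\mathscr{B}(S, S)$, so by functoriality of $\lsh{-}$ the proposition reduces to the purely closed-bicategorical identity
\[
f_* \circ t \circ \varepsilon^* \circ \overline{r} \;=\; a \circ (f^*)_* \circ t \circ \varepsilon^* \circ \overline{l} \colon Q \longrightarrow M^* \triangleright (P \triangleleft M),
\]
which no longer mentions the coshadow.

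The key step is to unwind the mate. Rather than the composite of Definition~\ref{defn:mate_2}, I would substitute the alternate description of $f^*$ given just after it: $f^*$ is the transpose of
\[
(Q \triangleleft M^*) \odot M^* \xrightarrow{\;\cong\;} M \odot Q \odot M^* \xrightarrow{\;\cong\;} M \odot (M \triangleright Q) \xrightarrow{1 \odot f} M \odot (P \triangleleft M) \xrightarrow{\ev} P,
\]
where the two isomorphisms are the instances $Q \triangleleft M^* \cong M \odot Q$ and $M \triangleright Q \cong Q \odot M^*$ of Corollary~\ref{cor:hom_from_dualizable}. Plugging this into the right-hand side of the reduced identity and taking transposes of both sides until no hom-object obstructs the target, the equality collapses to a statement about maps out of $M \odot Q \odot M^*$ assembled from $\eta$, $\varepsilon$, $\ev$, $\coev$ and unit and associativity isomorphisms, and it then follows from the two triangle identities for $(M, M^*)$ together with naturality of $\ev$, $\coev$, $t$, $a$, $\overline{l}$, $\overline{r}$, the unit-compatibility Lemmas~\ref{lem:hom_unit_iso_compatibility} and \ref{lem:hom_unit_iso_equality}, and the pentagons of Lemmas~\ref{lem:pentagon_a_and_t_1} and \ref{lem:pentagon_a_and_t_2}; the completed diagram will be displayed as a figure. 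For intuition, $\cotr(f)$ and $\cotr(f^*)$ are drawn by the very same string diagram as in Figure~\ref{fig:bicat_cotrace}, the only difference being whether the strand circling the cylinder is read as $M$ or as the wrong-way strand $M^*$, and passing from $f$ to $f^*$ is precisely the operation of routing that strand around the node on the other side, so the two pictures are related by an obvious deformation.

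I expect the only real obstacle to be bookkeeping. The conceptual content is nothing beyond the triangle identities, but the shape mismatch between $M \triangleright Q \to P \triangleleft M$ and $Q \triangleleft M^* \to M^* \triangleright P$ forces a long chain of reassociation isomorphisms ($t$, $a$, $\overline{l}$, $\overline{r}$) on both sides, so the diagram has many faces, each routine but easy to misassemble. Working from the transpose description of $f^*$ and transposing away hom-objects as early as possible is what keeps the verification tractable --- the same device that makes the proofs of Lemmas~\ref{lem:pentagon_a_and_t_1} and \ref{lem:pentagon_a_and_t_2} manageable.
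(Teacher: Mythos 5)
Your reading of $\cotr(f^*)$ as the mirrored form of Definition~\ref{defn:cotrace} for the left dualizable 1-cell $M^*$ is the intended one, and your overall plan---expand both cotraces, unwind $f^*$ via the transpose description following Definition~\ref{defn:mate_2}, then transpose away hom-objects and finish with the triangle identities and naturality---is essentially the shape of the paper's argument, which is one large commutative diagram with $\cotr(f)$ and $\cotr(f^*)$ around its boundary; your reduced identity $f_* \circ t \circ \varepsilon^* \circ \overline{r} = a \circ (f^*)_* \circ t \circ \varepsilon^* \circ \overline{l}$ is also correct. The gap is at the ``common tail'' step. Running Definition~\ref{defn:cotrace} in the mirror, as you stipulate, gives
\begin{multline*}
\lsh{Q} \xrightarrow{\lsh{\overline{l}}} \lsh{Q \triangleleft U_S} \xrightarrow{\lsh{\varepsilon^*}} \lsh{Q \triangleleft (M^* \odot M)} \xrightarrow{\lsh{t}} \lsh{(Q \triangleleft M^*) \triangleleft M} \xrightarrow{\lsh{(f^*)_*}} \lsh{(M^* \triangleright P) \triangleleft M} \\
\xrightarrow{\theta^{-1}} \lsh{M \triangleright (M^* \triangleright P)} \xrightarrow{\lsh{t^{-1}}} \lsh{(M \odot M^*) \triangleright P} \xrightarrow{\lsh{\eta^*}} \lsh{U_R \triangleright P} \xrightarrow{\lsh{\overline{r}^{-1}}} \lsh{P}.
\end{multline*}
No associator $a$ occurs here, the composite never passes through $\lsh{M^* \triangleright (P \triangleleft M)}$, the coevaluation $\eta$ is cancelled against the source of a $\triangleright$ rather than inside a $\triangleleft$, and the final map is $\lsh{\overline{r}^{-1}}$, not $\lsh{\overline{l}^{-1}}$. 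So $\cotr(f)$ and $\cotr(f^*)$ do not literally share the tail you claim, and the assertion that the residual identity ``no longer mentions the coshadow'' mislocates where the coshadow structure enters.

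The gap is fixable, and fixing it is exactly where the coshadow axioms do their work: the hexagon of Definition~\ref{defn:coshadow} (instantiated with $N = M^*$) lets you replace $\theta \circ a$ on $\lsh{(M^* \triangleright P) \triangleleft M}$ by $\lsh{t} \circ \theta \circ \lsh{t^{-1}} \circ \theta^{-1}$, after which cancelling $\lsh{t^{-1}}\circ\lsh{t}$, applying naturality of $\theta$ with respect to $\eta : U_R \to M \odot M^*$, and invoking the unit axiom $\lsh{\overline{l}^{-1}} \circ \theta = \lsh{\overline{r}^{-1}}$ shows that the displayed mirrored composite equals your rearranged form ending $\lsh{a}$, $\theta$, $\lsh{t^{-1}}$, $\lsh{\eta^*}$, $\lsh{\overline{l}^{-1}}$. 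With that bridge inserted your reduction to a coshadow-free 2-cell identity is valid and the remainder of your outline goes through; note, though, that the coshadow hexagon and unit axioms are thereby relocated, not avoided---they play the same role as the corresponding labeled cells in the paper's diagram, whose unlabeled cells are the naturality and functoriality squares you list.
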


\begin{proof}
The left and bottom sides of the diagram in Figure~\ref{fig:CD_cotrace_of_mate} are $\cotr(f)$, and the top and right sides are $\cotr(f^*)$. Every unlabeled square commutes because of functoriality of $- \triangleright -$ and $- \triangleleft -$ or naturality of $\theta$, $t$, $a$, $\overline{r}$, or $\overline{l}$.
\end{proof}

\begin{figure}[h]
	\centering
	\resizebox{!}{\dimexpr\textheight-0.26in}{\rotatebox{90}{
		\input diagram_cotrace_of_mate.tex
	}}
	\caption{Diagram for Proposition~\ref{prop:cotrace_of_mate}}
	\label{fig:CD_cotrace_of_mate}
\end{figure}

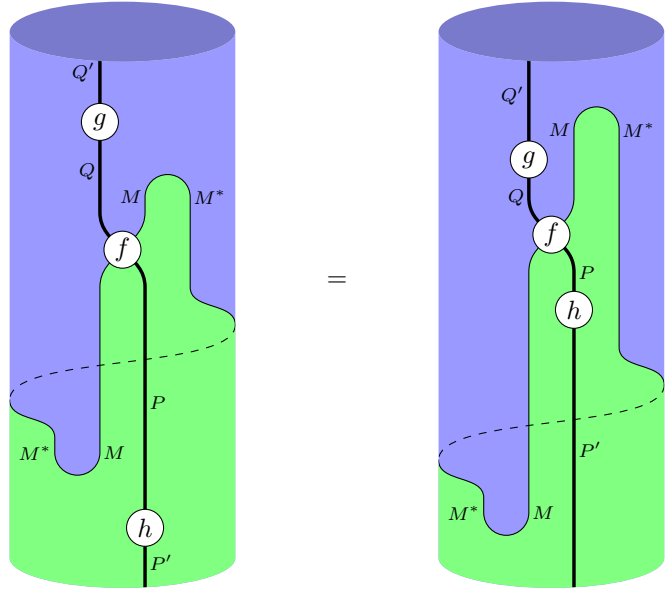
\begin{figure}[h]
	\centering
	\begin{tabular}{m{42mm}m{20mm}m{30mm}}
	\begin{tikzpicture}
		\bgcylinder{0,0}{7.0}{2.1}{.4}{color1}{color1}

		\coordinate (f) at (2.1, 4.2);
		\path (f) -- ++(-0.3, 0.5) coordinate (ful);
		\path (f) -- ++(0.3, 0.5) coordinate (fur);
		\path (f) -- ++(-0.3, -0.5) coordinate (fdl);
		\path (f) -- ++(0.3, -0.5) coordinate (fdr);

		\begin{pgfonlayer}{foreground}
			\drawtheta{(f)}{-2.8}{-1}{th}{}
		\end{pgfonlayer}

		\def\ecs{0.2} 
		\filldraw[color2fill] (f) to[out=45, in=-90] (fur) -- node[ed,pos=1]{$M$} ++(0, \ecs) coordinate (cap) arc(180:0:0.3) -- node[ed,pos=0]{$M^*$} ($(thR) + (-1.2, -0.5)$) -- ++(0, -\ecs) arc(-180:0:0.3) -- node[ed,swap]{$M$} ++(0, \ecs) to[out=90, in=-90] (thR) -- (thR') -- (thR' |- bot') -- (thL' |- bot') -- (thL') -- (thL) to[out=90, in=-90] ++(0.6, 0.5) coordinate(p1) -- (p1 |- cap) -- node[ed,pos=1]{$M$} ++(0, 0.8) arc(180:0:0.3) coordinate (p2) -- node[ed,pos=0]{$M^*$} (p2 |- fdl) -- ++(0, -\ecs) arc(-180:0:0.3) -- node[ed,swap]{$M$} (fdl) to[out=90, in=-135] (f);
		\draw[ds] (ful |- ul) -- node[ed]{$Q$} (ful) to[out=-90, in=135] (f) to[out=-45, in=90] (fdr) -- node[ed,swap,near end]{$P$} (fdr |- bot');

		\node[vert] at (f) {$f$};
	\end{tikzpicture}
	& \begin{center} = \end{center} &
	\begin{tikzpicture}
		\bgcylinder{0,0}{7.0}{1.5}{.4}{color1}{color1}

		\coordinate (f) at (1.5, 4.2) {};
		\path (f) -- ++(-0.3, 0.5) coordinate (ful);
		\path (f) -- ++(0.3, 0.5) coordinate (fur);
		\path (f) -- ++(-0.3, -0.5) coordinate (fdl);
		\path (f) -- ++(0.3, -0.5) coordinate (fdr);

		\begin{pgfonlayer}{foreground}
			\drawtheta{(f)}{-0.8}{1}{th}{}
		\end{pgfonlayer}

		\def\ecs{0.2} 
		\filldraw[color2fill] (f) to[out=-135, in=90] (fdl) -- (fdl |- thL) -- node[ed]{$M$} ++(0, -0.5) -- ++(0, -\ecs) arc (0:-180:0.3) -- node[ed]{$M^*$} ++(0, \ecs) to[out=90, in=-90] (thL) -- (thL') -- (thL' |- bot') -- (thR' |- bot') -- (thR') -- (thR) to[out=90, in=-90] ++(-0.6, 0.5) -- ($(fur) + (0.6, 0)$) -- node[ed,swap]{$M^*$} ++(0, \ecs) arc(0:180:0.3) -- node[ed,swap]{$M$} (fur) to[out=-90, in=45] (f);
		\draw[ds] (f) to[out=135, in=-90] (ful) -- node[ed]{$Q$} (ful |- ul);
		\draw[ds] (f) to[out=-45, in=90] (fdr) -- node[ed,near end]{$P$} (fdr |- bot);

		\node[vert] at (f) {$f$};
	\end{tikzpicture}
\end{tabular}
	\caption{String diagram picture of Proposition~\ref{prop:cotrace_of_mate}}
	\label{fig:SD_cotrace_of_mate}
\end{figure}

This equality is shown graphically in Figure~\ref{fig:SD_cotrace_of_mate}; once again, the deformation should be fairly easy to visualize.

We conclude this list of properties with an analogue of cyclicity of the bicategorical trace \cite[Proposition~7.2]{Shadows_and_traces}, which generalizes the familiar fact from linear algebra that $\tr(AB) = \tr(BA)$ for any matrices such that $AB$ and $BA$ are square matrices (even if $A$ and $B$ themselves are not square).

\begin{prop}
\label{prop:cotrace_cyclicity}
Let $M$ and $N$ be right dualizable 1-cells in a closed bicategory with a coshadow. For maps $f : M \triangleright Q_1 \to P_1 \triangleleft N$ and $g : P_2 \odot M \to N \odot Q_2$, the following diagram commutes:
\[\begin{tikzcd}[column sep=4em, row sep=3em]
	\lsh{Q_1 \triangleleft Q_2}
		\arrow[r, "\cotr(g^* f_*)"]
		\arrow[d, "\theta"', "\cong"]
	& \lsh{P_1 \triangleleft P_2}
		\arrow[d, "\theta", "\cong"']
	\\
	\lsh{Q_2 \triangleright Q_1}
		\arrow[r, "\cotr(f_* g^*)"']
	& \lsh{P_2 \triangleright P_1}
\end{tikzcd}\]
where $g^* f_*$ and $f_* g^*$ mean the following:
\begin{multline*}
	g^* f_* : M \triangleright (Q_1 \triangleleft Q_2) \underset{\cong}{\xrightarrow{a^{-1}}} (M \triangleright Q_1) \triangleleft Q_2 \xrightarrow{f_*} (P_1 \triangleleft N) \triangleleft Q_2 \\ 
	\underset{\cong}{\xrightarrow{t^{-1}}} P_1 \triangleleft (N \odot Q_2) \xrightarrow{g^*} P_1 \triangleleft (P_2 \odot M) \underset{\cong}{\xrightarrow{t}} (P_1 \triangleleft P_2) \triangleleft M
\end{multline*}
\begin{multline*}
	f_* g^* : N \triangleright (Q_2 \triangleright Q_1) \underset{\cong}{\xrightarrow{t^{-1}}} (N \odot Q_2) \triangleright Q_1 \xrightarrow{g^*} (P_2 \odot M) \triangleright Q_1 \\
	\underset{\cong}{\xrightarrow{t}} P_2 \triangleright (M \triangleright Q_1) \xrightarrow{f_*} P_2 \triangleright (P_1 \triangleleft N) \underset{\cong}{\xrightarrow{a^{-1}}} (P_2 \triangleright P_1) \triangleleft N
\end{multline*}
\end{prop}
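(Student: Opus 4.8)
The plan is to follow the template of the preceding proofs in this section: expand $\cotr(g^* f_*)$ and $\cotr(f_* g^*)$ via Definition~\ref{defn:cotrace} and assemble a single large (landscape) commutative diagram whose top-then-right composite is $\theta \circ \cotr(g^* f_*)$ and whose left-then-bottom composite is $\cotr(f_* g^*) \circ \theta$, with the two outer $\theta$'s being the coshadow cyclicity isomorphisms $\lsh{Q_1 \triangleleft Q_2} \cong \lsh{Q_2 \triangleright Q_1}$ and $\lsh{P_1 \triangleleft P_2} \cong \lsh{P_2 \triangleright P_1}$. After expansion, the top-then-right route runs the chain defining $g^* f_*$ (namely $a^{-1}, f_*, t^{-1}, g^*, t$) through the cotrace boilerplate for the dual pair $(M, M^*)$ --- that is, $\lsh{\overline{r}}$, $\lsh{\varepsilon^*}$, $\lsh{t}$, the functoriality map, $\theta$, $\lsh{t^{-1}}$, $\lsh{\eta^*}$, $\lsh{\overline{l}^{-1}}$ --- while the left-then-bottom route runs the chain defining $f_* g^*$ (namely $t^{-1}, g^*, t, f_*, a^{-1}$) through the same boilerplate for $(N, N^*)$; the task is then to interpolate between these two strings of maps.

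Most cells of the interpolating diagram commute for purely formal reasons: functoriality of $- \triangleright -$ and $- \triangleleft -$, naturality of $t$, $a$, $\overline{r}$, $\overline{l}$, and naturality of the coshadow $\theta$. The content is carried by two kinds of cell. First, the coshadow hexagon and unit-triangle axioms of Definition~\ref{defn:coshadow}, which are precisely what let the $\theta$ occurring inside each expanded cotrace be traded for the outer coshadow $\theta$ together with a reparenthesization --- this is the step that converts a $\triangleright$-bracketing of the twisting 1-cells $M$, $N$ into a $\triangleleft$-bracketing. Second, the closed-bicategory pentagon coherences of Lemmas~\ref{lem:pentagon_a_and_t_1} and \ref{lem:pentagon_a_and_t_2}, together with the unit-compatibility Lemmas~\ref{lem:hom_unit_iso_compatibility} and \ref{lem:hom_unit_iso_equality}, which reconcile the two inequivalent-looking ways that the strings $M$, $N$, and $Q_2$ (respectively $P_2$) get bracketed against $Q_1$ (respectively $P_1$) by the various $t$'s and $a$'s. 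As in the other cyclicity-type results in \cite{Shadows_and_traces}, I expect the triangle identities for $(M, M^*)$ and $(N, N^*)$ not to intervene directly: those dual pairs enter only through $\varepsilon^*$, $\eta^*$, and the functoriality maps built from $f$ and $g$, and it is the datum of $g$ --- which links $M$ with $N$ and $P_2$ with $Q_2$ --- rather than any triangle identity that effects the passage between the two dualizable objects.

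The most efficient way to discover the correct layout, and a good sanity check once it is drawn, is to first produce the string-diagram picture on the coshadow cylinder: $\cotr(g^* f_*)$ is the $f$-node stacked above the $g$-node with the $M$ and $N$ strings bending around the cylinder, and $\cotr(f_* g^*)$ is the same picture after rotating the cylinder a half-turn so that $g$ sits above $f$; the outer $\theta$'s record the fact that the distinguished strings exchange left/right order under this rotation. This makes the identity visually evident and pins down exactly which reparenthesizations, hence which pentagon cells, must appear in the formal diagram. I therefore expect the main obstacle to be organizational rather than conceptual: arranging the landscape diagram so that the $M$-side and $N$-side boilerplate interleave without clutter, and in particular locating the single cell in which the coshadow hexagon is invoked to pass from the $\triangleright$-bracketing to the $\triangleleft$-bracketing of $M$ and $N$; once that cell is sited correctly, every remaining cell should reduce to naturality or functoriality.
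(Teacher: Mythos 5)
Your overall architecture is the same as the paper's: expand both cotraces via Definition~\ref{defn:cotrace}, interpolate with a large diagram whose outer edges are $\theta \circ \cotr(g^*f_*)$ and $\cotr(f_*g^*) \circ \theta$, and dispose of most cells by functoriality of $\odot$, $\triangleright$, $\triangleleft$ and naturality of $\theta$, $t$, $a$, $\overline{r}$, $\overline{l}$, with the coshadow axioms and the pentagon/unit lemmas carrying the coherence bookkeeping (the paper in fact needs two glued diagrams plus an expanded subdiagram to organize this). So the plan is the right shape.

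However, there is a genuine gap: your explicit prediction that the triangle identities for $(M, M^*)$ and $(N, N^*)$ do not intervene is wrong, and a diagram closed up only by the ingredients you list cannot exist. The left-hand route $\cotr_M(g^*f_*)$ uses only the duality data $(\eta_M, \varepsilon_M)$, while the right-hand route $\cotr_N(f_*g^*)$ uses only $(\eta_N, \varepsilon_N)$; naturality, functoriality, the coshadow axioms, and the pentagon/unit lemmas never relate a coevaluation to an evaluation, so they cannot convert one set of duality data into the other. Concretely, take the one-object case with $M = N = Q_i = P_i = U$, $f$ and $g$ identities, and replace $(\eta_M, \varepsilon_M)$, $(\eta_N, \varepsilon_N)$ by scalar multiples $a, b, c, d$ of the canonical maps: the two cotrace composites become multiplication by $ab$ and by $cd$, so the proposition reduces to $ab = cd$, which holds precisely because the triangle identities force $ab = 1 = cd$ and fails otherwise. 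Hence any correct proof must invoke both triangle identities; in the string picture this is exactly the content of ``sliding $f$ around the back of the cylinder,'' which cancels the bend in the $M$ string through $(\eta_M, \varepsilon_M)$ and creates the bend in the $N$ string through $(\eta_N, \varepsilon_N)$ --- zigzag moves, not mere naturality. Your appeal to the cyclicity results of \cite{Shadows_and_traces} is misleading on this point: their string-diagram proofs absorb the triangle identities into the allowed deformations (bent strings may be straightened), and once you translate the deformation into an explicit commutative diagram, as you propose and as the paper does, those identities reappear as honest cells that must be sited and verified.
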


\begin{proof}
The top and right sides of the diagram in Figure~\ref{fig:CD_cotrace_cyclicity} are $\cotr(g^*f_*) : \lsh{Q_1 \triangleleft Q_2} \to \lsh{P_1 \triangleleft P_2}$, while the left and bottom sides of the diagram in Figure~\ref{fig:CD_cotrace_cyclicity_2} are
\[
	\lsh{Q_1 \triangleleft Q_2} \underset{\cong}{\xrightarrow{\theta}} \lsh{Q_2 \triangleright Q_1} \xrightarrow{\cotr(f_*g^*)} \lsh{P_2 \triangleright P_1} \underset{\cong}{\xrightarrow{\theta}} \lsh{P_1 \triangleleft P_2}.
\]
The two diagrams glue together along their other edges. Every unlabeled square commutes because of functoriality of $- \odot -$, $- \triangleright -$, or $- \triangleleft -$ or naturality of $\theta$, $t$, $a$, $\overline{r}$, or $\overline{l}$.
\end{proof}

\begin{figure}[h]
	\centering
	\resizebox{!}{\dimexpr\textheight-0.26in}{\rotatebox{90}{
		\input diagram_cyclicity.tex
	}}
	\caption{Diagram for Proposition~\ref{prop:cotrace_cyclicity}}
	\label{fig:CD_cotrace_cyclicity}
\end{figure}

\begin{figure}[h]
	\centering
	\resizebox{\textwidth}{!}{
		\begin{tikzpicture}[xscale=3.6, yscale=2.8]
	\newcommand{\Xa}{0}
	\newcommand{\Xb}{\Xa+1.0}
	\newcommand{\Xc}{\Xb+1.0}
	\newcommand{\Xd}{\Xc+1.0}
	\newcommand{\Xe}{\Xd+1.0}
	\newcommand{\Xf}{\Xe+1.0}
	\newcommand{\Xg}{\Xf+1.0}
	\newcommand{\Xh}{\Xg+1.05}
	\newcommand{\Xj}{\Xh+1.0}
	\newcommand{\Ya}{0}
	\newcommand{\Yb}{\Ya+1.0}
	\newcommand{\Yc}{\Yb+1.0}
	\newcommand{\Yd}{\Yc+1.0}
	\newcommand{\Ye}{\Yd+1.0}
	\newcommand{\Yf}{\Ye+1.0}
	\newcommand{\Yg}{\Yf+1.0}
	\newcommand{\Yh}{\Yg+1.0}
	\newcommand{\Yi}{\Yh+1.0}
	\newcommand{\Yj}{\Yi+1.0}

	\node(A) at (\Xd, \Yj){$\lsh{Q_1 \triangleleft Q_2}$};
	\node(B) at (\Xe, \Yi){$\lsh{Q_1 \triangleleft N^*NQ_2}$};
	\node(C) at (\Xf, \Yi){$\lsh{Q_1 \triangleleft N^*P_2M}$};
	\node(D) at (\Xf, \Yh){$\lsh{(Q_1 \triangleleft N^*P_2) \triangleleft M}$};
	\node(E) at (\Xf, \Yg){$\lsh{M \triangleright (Q_1 \triangleleft N^*P_2)}$};
	\node(F) at (\Xf, \Yf){$\lsh{(M \triangleright Q_1) \triangleleft N^*P_2}$};
	\node(G) at (\Xf, \Ye){$\lsh{(P_1 \triangleleft N) \triangleleft N^*P_2}$};
	\node(H) at (\Xg, \Yd){$\lsh{P_1 \triangleleft NN^*P_2}$};
	\node(I) at (\Xh, \Yc){$\lsh{P_1 \triangleleft P_2}$};
	\node(J) at (\Xh, \Yb){$\lsh{P_2 \triangleright P_1}$};
	\node(K) at (\Xc, \Yi){$\lsh{Q_2 \triangleright Q_1}$};
	\node(L) at (\Xb, \Yi){$\lsh{U_R \triangleright (Q_2 \triangleright Q_1)}$};
	\node(M) at (\Xb, \Yh){$\lsh{N^*N \triangleright (Q_2 \triangleright Q_1)}$};
	\node(N) at (\Xb, \Yg){$\lsh{N^* \triangleright (N \triangleright (Q_2 \triangleright Q_1))}$};
	\node(O) at (\Xb, \Yf){$\lsh{N^* \triangleright (NQ_2 \triangleright Q_1)}$};
	\node(P) at (\Xb, \Ye){$\lsh{N^* \triangleright (P_2M \triangleright Q_1)}$};
	\node(Q) at (\Xb, \Yd){$\lsh{N^* \triangleright (P_2 \triangleright (M \triangleright Q_1))}$};
	\node(R) at (\Xb, \Yc){$\lsh{N^* \triangleright (P_2 \triangleright (P_1 \triangleleft N))}$};
	\node(S) at (\Xb, \Yb){$\lsh{N^* \triangleright ((P_2 \triangleright P_1) \triangleleft N)}$};
	\node(T) at (\Xc, \Ya){$\lsh{((P_2 \triangleright P_1) \triangleleft N) \triangleleft N^*}$};
	\node(U) at (\Xe, \Ya){$\lsh{(P_2 \triangleright P_1) \triangleleft NN^*}$};
	\node(V) at (\Xg, \Ya){$\lsh{(P_2 \triangleright P_1) \triangleleft U_S}$};
	\node(W) at (\Xd, \Yh){$\lsh{N^*NQ_2 \triangleright Q_1}$};
	\node(Z) at (\Xd, \Yg){$\lsh{N^*P_2M \triangleright Q_1}$};
	\node(A1) at (\Xd, \Yf){$\lsh{N^*P_2 \triangleright (M \triangleright Q_1)}$};
	\node(B1) at (\Xd, \Ye){$\lsh{N^*P_2 \triangleright (P_1 \triangleleft N)}$};
	\node(C1) at (\Xc, \Yb){$\lsh{(P_2 \triangleright (P_1 \triangleleft N)) \triangleleft N^*}$};
	\node(D1) at (\Xd, \Yc){$\lsh{P_2 \triangleright ((P_1 \triangleleft N) \triangleleft N^*)}$};
	\node(E1) at (\Xe, \Yb){$\lsh{P_2 \triangleright (P_1 \triangleleft NN^*)}$};
	\node(G1) at (\Xe, \Yd){$\lsh{((P_1 \triangleleft N) \triangleleft N^*) \triangleleft P_2}$};
	\node(H1) at (\Xf, \Yc){$\lsh{(P_1 \triangleleft NN^*) \triangleleft P_2}$};
	\node(I1) at (\Xg, \Yc){$\lsh{(P_1 \triangleleft U_S) \triangleleft P_2}$};
	\node(F1) at (\Xg, \Yb){$\lsh{P_2 \triangleright (P_1 \triangleleft U_S)}$};

	\draw [->] (A) -- node[above right]{$\lsh{(\varepsilon \odot 1)^*}$} (B);
	\draw [->] (B) -- node[above](BtoC){} (C); \node at ($(BtoC) + (0, 0.1)$) {$\lsh{(1 \odot g)^*}$};
	\draw [->] (C) -- node[right]{$\lsh{t}$} node[below left]{$\cong$} (D);
	\draw [->] (D) -- node[right]{$\theta$} node[left]{$\cong$} (E);
	\draw [->] (E) -- node[right]{$\lsh{a^{-1}}$} node[left]{$\cong$} (F);
	\draw [->] (F) -- node[above right]{$\lsh{f_*}$} (G);
	\draw [->] (G) -- node[above right]{$\lsh{t^{-1}}$} node[below left]{$\cong$} (H);
	\draw [->] (H) -- node[above right]{$\lsh{(\eta \odot 1)^*}$} (I);
	\draw [->] (J) -- node[right]{$\theta$} node[left]{$\cong$} (I);
	\draw [->] (A) -- node[above left]{$\theta$} node[below right]{$\cong$} (K);
	\draw [->] (K) -- node[above]{$\lsh{\overline{r}}$} node[below]{$\cong$} (L);
	\draw [->] (L) -- node[left]{$\lsh{\varepsilon^*}$} (M);
	\draw [->] (M) -- node[left]{$\lsh{t}$} node[right]{$\cong$} (N);
	\draw [->] (N) -- node[left]{$\lsh{t^{-1}_*}$} node[right]{$\cong$} (O);
	\draw [->] (O) -- node[left]{$\lsh{(g^*)_*}$} (P);
	\draw [->] (P) -- node[left]{$\lsh{t_*}$} node[right]{$\cong$} (Q);
	\draw [->] (Q) -- node[left]{$\lsh{f_{**}}$} (R);
	\draw [->] (R) -- node[left]{$\lsh{a^{-1}_*}$} node[right]{$\cong$} (S);
	\draw [->] (S) -- node[below left]{$\theta$} node[above right]{$\cong$} (T);
	\draw [->] (T) -- node[below]{$\lsh{t^{-1}}$} node[above]{$\cong$} (U);
	\draw [->] (U) -- node[below]{$\lsh{\eta^*}$} (V);
	\draw [->] (V) -- node[below right]{$\lsh{\overline{l}^{-1}}$} node[above left]{$\cong$} (J);
	\draw [->] (B) -- node[above left]{$\theta$} node[below right]{$\cong$} (W);
	\draw [->] (K) -- node[above right]{$\lsh{(\varepsilon \odot 1)^*}$} (W);
	\draw [->] (W) -- node[above]{$\lsh{t}$} node[below]{$\cong$} (M);
	\draw [->] (W) -- node[right,pos=0.3]{$\lsh{(1 \odot g)^*}$} (Z);
	\draw [->] (W) -- node[above left]{$\lsh{t}$} node[below right]{$\cong$} (O);
	\draw [->] (Z) -- node[above left]{$\lsh{t}$} node[below right]{$\cong$} (P);
	\draw [->] (C) -- node[above left]{$\theta$} node[below right]{$\cong$} (Z);
	\draw [->] (Z) -- node[left]{$\lsh{t}$} node[right]{$\cong$} (A1);
	\draw [->] (A1) -- node[above]{$\theta$} node[below]{$\cong$} (F);
	\draw [->] (A1) -- node[above left]{$\lsh{t}$} node[below right]{$\cong$} (Q);
	\draw [->] (B1) -- node[above left]{$\lsh{t}$} node[below right]{$\cong$} (R);
	\draw [->] (C1) -- node[above left]{$\lsh{a}$} node[below right]{$\cong$} (D1);
	\draw [->] (A1) -- node[right]{$\lsh{f_*}$} (B1);
	\draw [->] (B1) -- node[above]{$\theta$} node[below]{$\cong$} (G);
	\draw [->] (R) -- node[above right]{$\theta$} node[below left]{$\cong$} (C1);
	\draw [->] (D1) -- node[above left]{$\theta$} node[below right]{$\cong$} (G1);
	\draw [->] (G) -- node[above left]{$\lsh{t}$} node[below right]{$\cong$} (G1);
	\draw [->] (G1) -- node[above right]{$\lsh{t^{-1}_*}$} node[below left]{$\cong$} (H1);
	\draw [->] (D1) -- node[above right]{$\lsh{t^{-1}_*}$} node[below left]{$\cong$} (E1);
	\draw [->] (C1) -- node[left]{$\lsh{a^{-1}_*}$} node[right]{$\cong$} (T);
	\draw [->] (E1) -- node[left]{$\lsh{a^{-1}}$} node[right]{$\cong$} (U);
	\draw [->] (H) -- node[above left]{$\lsh{t}$} node[below right]{$\cong$} (H1);
	\draw [->] (E1) -- node[above left]{$\theta$} node[below right]{$\cong$} (H1);
	\draw [->] (H1) -- node[below]{$\lsh{(\eta^*)_*}$} (I1);
	\draw [->] (I1) -- node[below]{$\lsh{\overline{l}^{-1}_*}$} node[above]{$\cong$} (I);
	\draw [->] (F1) -- node[left]{$\theta$} node[right]{$\cong$} (I1);
	\draw [->] (F1) -- node[above]{$\lsh{\overline{l}^{-1}_*}$} node[below]{$\cong$} (J);
	\draw [->] (E1) -- node[above]{$\lsh{(\eta^*)_*}$} (F1);
	\draw [->] (F1) -- node[left]{$\lsh{a^{-1}}$} node[right]{$\cong$} (V);

	\node at (barycentric cs:L=1,K=1,M=1,W=1)[align=center]{Lemma \ref{lem:hom_unit_iso_compatibility_with_t_2} \\ and naturality of $t$};
	\node at (barycentric cs:M=2,N=2,W=1,Z=1){Lemma \ref{lem:pentagon_a_and_t_1}};
	\node at (barycentric cs:Q=1,Z=1){Lemma \ref{lem:pentagon_a_and_t_1}};
	\node at (barycentric cs:Z=1,E=1){Definition \ref{defn:coshadow}};
	\node at (barycentric cs:T=1,E1=1){Lemma \ref{lem:pentagon_a_and_t_2}};
	\node at (barycentric cs:B1=1,D1=1){Definition \ref{defn:coshadow}};
	\node at (barycentric cs:F1=1.5,J=1.5,V=1){Lemma \ref{lem:hom_unit_iso_compatibility_with_a}};
	\node at (barycentric cs:H=1,I1=1.5)[align=center]{Lemma \ref{lem:hom_unit_iso_compatibility_with_t} \\ and naturality of $t$};
\end{tikzpicture}
	}
	\caption{Diagram for Proposition~\ref{prop:cotrace_cyclicity}}
	\label{fig:CD_cotrace_cyclicity_2}
\end{figure}

\begin{figure}[h]
	\centering
	\resizebox{\textwidth}{!}{
		\begin{tikzpicture}[xscale=3.5, yscale=2]
	\node(F2) at (0, 4){$\lsh{M^*M \triangleright (Q_1 \triangleleft N^*P_2M)}$};
	\node(G2) at (2, 4){$\lsh{M^* \triangleright (M \triangleright (Q_1 \triangleleft N^*P_2M))}$};
	\node(H2) at (4, 4){$\lsh{M^* \triangleright ((M \triangleright Q_1) \triangleleft N^*P_2M)}$};
	\node(L2) at (0, 2){$\lsh{M^*M \triangleright ((Q_1 \triangleleft N^*P_2) \triangleleft M)}$};
	\node(M2) at (4, 2){$\lsh{M^* \triangleright (((M \triangleright Q_1) \triangleleft N^*P_2) \triangleleft M)}$};
	\node(Q2) at (0, 0){$\lsh{(M^*M \triangleright (Q_1 \triangleleft N^*P_2)) \triangleleft M}$};
	\node(R2) at (2, 0){$\lsh{(M^* \triangleright (M \triangleright (Q_1 \triangleleft N^*P_2))) \triangleleft M}$};
	\node(S2) at (4, 0){$\lsh{(M^* \triangleright ((M \triangleright Q_1) \triangleleft N^*P_2)) \triangleleft M}$};
	\node(W2) at (1, 3){$\lsh{(M^*M \triangleright Q_1) \triangleleft N^*P_2M}$};
	\node(Z2) at (3, 3){$\lsh{(M^* \triangleright (M \triangleright Q_1)) \triangleleft N^*P_2M}$};
	\node(A3) at (1, 1){$\lsh{((M^*M \triangleright Q_1) \triangleleft N^*P_2) \triangleleft M}$};
	\node(B3) at (3, 1){$\lsh{((M^* \triangleright (M \triangleright Q_1)) \triangleleft N^*P_2) \triangleleft M}$};

	\draw [->] (F2) -- node[above]{$\lsh{t}$} node[below]{$\cong$} (G2);
	\draw [->] (G2) -- node[above]{$\lsh{a^{-1}_*}$} node[below]{$\cong$} (H2);
	\draw [->] (H2) -- node[right]{$\lsh{t_*}$} node[left]{$\cong$} (M2);
	\draw [->] (M2) -- node[right]{$\lsh{a^{-1}}$} node[left]{$\cong$} (S2);
	\draw [->] (F2) -- node[left]{$\lsh{t_*}$} node[right]{$\cong$} (L2);
	\draw [->] (L2) -- node[left]{$\lsh{a^{-1}}$} node[right]{$\cong$} (Q2);
	\draw [->] (Q2) -- node[below]{$\lsh{t_*}$} node[above]{$\cong$} (R2);
	\draw [->] (R2) -- node[below]{$\lsh{a^{-1}_{**}}$} node[above]{$\cong$} (S2);

	\draw [->] (W2) -- node[above]{$\lsh{t_*}$} node[below]{$\cong$} (Z2);
	\draw [->] (W2) -- node[left, pos=0.25]{$\lsh{t}$} node[right, pos=0.25]{$\cong$} (A3);
	\draw [->] (Z2) -- node[right, pos=0.75]{$\lsh{t}$} node[left, pos=0.75]{$\cong$} (B3);
	\draw [->] (A3) -- node[below]{$\lsh{t_{**}}$} node[above]{$\cong$} (B3);
	\draw [->] (F2) -- node[above right, pos=0.8]{$\lsh{a^{-1}}$} node[below left]{$\cong$} (W2);
	\draw [->] (B3) -- node[below left, pos=0.2]{$\lsh{a_*}$} node[above right]{$\cong$} (S2);
	\draw [->] (Z2) -- node[above left, pos=0.2]{$\lsh{a}$} node[below right]{$\cong$} (H2);
	\draw [->] (Q2) -- node[below right, pos=0.8]{$\lsh{a^{-1}_*}$} node[above left]{$\cong$} (A3);

	\node at (barycentric cs:F2=1,H2=1,W2=1,Z2=1){Lemma \ref{lem:pentagon_a_and_t_2}};
	\node at (barycentric cs:A3=1,B3=1,Q2=1,S2=1){Lemma \ref{lem:pentagon_a_and_t_2}};
	\node at (barycentric cs:A3=1.7,F2=1,Q2=1.7,W2=1){Lemma \ref{lem:pentagon_a_and_t_2}};
	\node at (barycentric cs:H2=1.7,Z2=1.7,B3=1,S2=1){Lemma \ref{lem:pentagon_a_and_t_2}};
\end{tikzpicture}
	}
	\caption{Subdiagram of Figure~\ref{fig:CD_cotrace_cyclicity}}
	\label{fig:CD_cotrace_cyclicity_3}
\end{figure}

This equality is shown graphically in Figure~\ref{fig:SD_cyclicity}; the deformation is achieved by sliding $f$ to the right and around the back of the cylinder.

\begin{figure}[h]
	\centering
	\begin{tabular}{m{36mm}m{20mm}m{36mm}}
	\begin{tikzpicture}
		\bgcylinder{0,0}{8.4}{1.8}{.4}{color1}{color1}

		\coordinate (f) at (2.1, 6.5);
		\path (f) -- ++(-0.6, -1.3) coordinate (g);
		\path (f) -- ++(-0.3, 0.5) coordinate (ful);
		\path (f) -- ++(0.3, 0.5) coordinate (fur);
		\path (f) -- ++(-0.3, -0.5) coordinate (fdl);
		\path (f) -- ++(0.3, -0.5) coordinate (fdr);
		\path (g) -- ++(-0.3, 0.5) coordinate (gul);
		\path (g) -- ++(0.3, 0.5) coordinate (gur);
		\path (g) -- ++(-0.3, -0.5) coordinate (gdl);
		\path (g) -- ++(0.3, -0.5) coordinate (gdr);

		\begin{pgfonlayer}{foreground}
			\drawtheta{(g)}{-0.7}{1}{th}{}
			\drawtheta{(g)}{-3.2}{1}{th2}{ds}
		\end{pgfonlayer}

		\def\ecs{0.1} 
		\filldraw[color2fill] (ful |- ul) -- (ful) to[out=-90, in=135] (f) to[out=-135, in=90] (fdl) -- (gur) to[out=-90, in=45] (g) to[out=135, in=-90] (gul) -- node[ed,pos=0.75]{$Q_2$} (gul |- ul);
		\filldraw[color3fill] (f) to[out=45, in=-90] (fur) -- node[ed,pos=1]{$M$} ++(0, \ecs) arc(180:0:0.3) -- node[ed,pos=0]{$M^*$} ($(thR) + (-0.6, 0.5)$) to[out=-90, in=90](thR) -- (thR') -- (thR' |- bot') -- (thL' |- bot') -- (thL') -- (thL) to[out=-90, in=90] ++(0.6, -0.5) -- node[ed,swap,pos=1]{$M^*$} ++(0, -\ecs) arc(-180:0:0.3) -- node[ed,swap,pos=0]{$M$} (gdl) to[out=90, in=-135] (g) to[out=45, in=-90] (gur) -- node[ed]{$N$} (fdl) to[out=90, in=-135] (f);
		\filldraw[color4fill] (gdr |- bot') -- node[ed,swap,pos=0.07]{$P_2$} (gdr) to[out=90, in=-45] (g) to[out=45, in=-90] (gur) -- (fdl) to[out=90, in=-135] (f) to[out=-45, in=90] (fdr) -- ($(th2R) + (-1.2, 1.0)$) to[out=-90, in=90] (th2R) -- (th2R') -- (th2R' |- bot');
		\filldraw[color4fill] (th2L) to[out=-90, in=90] ++(1.2, -1.0) -- (gdl |- bot') -- (th2L' |- bot') -- (th2L');

		\draw[ds] (ful |- ul) -- node[ed]{$Q_1$} (ful) to[out=-90, in=135] (f) to[out=-45, in=90] (fdr) -- node[ed]{$P_1$} ($(th2R) + (-1.2, 1.0)$) to[out=-90, in=90] (th2R);
		\draw[ds] (th2L) to[out=-90, in=90] ++(1.2, -1.0) -- node[ed,swap,pos=0.2]{$P_1$} (gdl |- bot');

		\node[vert] at (f) {$f$};
		\node[vert] at (g) {$g$};
	\end{tikzpicture}
	& \begin{center} = \end{center} &
	\begin{tikzpicture}
		\bgcylinder{0,0}{8.4}{1.8}{.4}{color4}{color1}

		\coordinate (g) at (2.1, 4.1);
		\path (g) -- ++(-0.6, -1.3) coordinate (f);
		\path (f) -- ++(-0.3, 0.5) coordinate (ful);
		\path (f) -- ++(0.3, 0.5) coordinate (fur);
		\path (f) -- ++(-0.3, -0.5) coordinate (fdl);
		\path (f) -- ++(0.3, -0.5) coordinate (fdr);
		\path (g) -- ++(-0.3, 0.5) coordinate (gul);
		\path (g) -- ++(0.3, 0.5) coordinate (gur);
		\path (g) -- ++(-0.3, -0.5) coordinate (gdl);
		\path (g) -- ++(0.3, -0.5) coordinate (gdr);

		\begin{pgfonlayer}{foreground}
			\drawtheta{(g)}{2.5}{1}{th}{ds}
			\drawtheta{(f)}{-0.7}{1}{th2}{}
		\end{pgfonlayer}

		\def\ecs{0.1} 
		\filldraw[color3fill] (fdr |- bot') -- (fdr) to[out=90, in=-45] (f) to[out=45, in=-90] (fur) -- (gdl) to[out=90, in=-135] (g) to[out=-45, in=90] (gdr) -- node[ed,pos=0.9]{$P_2$} (gdr |- bot');
		\filldraw[color2fill] (th2L) to[out=-90, in=90] ++(0.6, -0.5) -- node[ed,swap,pos=1]{$N^*$} ++(0, -\ecs) arc(-180:0:0.3) -- node[ed,swap,pos=0]{$N$} (fdl) to[out=90, in=-135] (f) to[out=45, in=-90] (fur) -- (gdl) to[out=90, in=-135] (g) to[out=45, in=-90] (gur) -- node[ed,pos=1]{$N$} ++(0, \ecs) arc(180:0:0.3) -- node[ed,pos=0]{$N^*$} ($(th2R) + (-0.6, 0.5)$) to[out=-90, in=90] (th2R) -- (th2R') -- (ur') -- (ul') -- (th2L');
		\filldraw[color1fill] (thL) to[out=-90, in=90] ++(1.2, -1.0) -- node[ed,swap]{$Q_1$} (ful) to[out=-90, in=135] (f) to[out=45, in=-90] (fur) -- node[ed]{$M$} (gdl) to[out=90, in=-135] (g) to[out=135, in=-90] (gul) -- node[ed,pos=0.8]{$Q_2$} (gul |- ul') -- (ul') -- (thL');
		\filldraw[color1fill] (thR) to[out=90, in=-90] ++(-1.2, 1.0) -- node[ed,swap,pos=0.2]{$Q_1$} (gur |- ur') -- (ur') -- (thR');

		\draw[ds] (thR) to[out=90, in=-90] ++(-1.2, 1.0) -- (gur |- ur');
		\draw[ds] (thL) to[out=-90, in=90] ++(1.2, -1.0) -- (ful) to[out=-90, in=135] (f);
		\draw[ds] (fdr |- bot') -- node[ed,pos=0.12]{$P_1$} (fdr) to[out=90, in=-45] (f);

		\node[vert] at (f) {$f$};
		\node[vert] at (g) {$g$};
	\end{tikzpicture}
\end{tabular}
	\caption{String diagram picture of Proposition~\ref{prop:cotrace_cyclicity}}
	\label{fig:SD_cyclicity}
\end{figure}

\section{Interplay between traces and cotraces}
\label{sec:interplay}

The original motivation for our study of cotraces was \cite{Lipman1987}, in which both traces and cotraces arise and interact with each other. This interaction is mediated by a ``pairing'' map from a shadow and coshadow to a second shadow.

\begin{example}
\label{ex:pairing_example}
If $M$ and $N$ are $(R, R)$-bimodules, we have
\[
	\HH^0(R, M) \otimes \HH_0(R, N) \xrightarrow{\rho} \HH_0(R, M \otimes_R N)
\]
taking $m \otimes [n]$ to $[m \otimes n]$. In fact, there is a pairing
\[
	\HH^n(R, M) \otimes \HH_n(R, N) \xrightarrow{\rho} \HH_0(R, M \otimes_R N)
\]
for any $n \geq 0$.
\end{example}

The main result of \cite{Lipman1987}, Proposition 4.5.4, is a relation between traces and cotraces. In the case that $n = 0$, it takes the following form, where $R$ is a commutative ring, $M$ is an $R$-module considered as an $(R, R)$-bimodule, and $F$ is a finitely generated projective right $R$-module viewed as an $(S, R)$-bimodule, where $S := \Hom_R(F, F)$:
\begin{equation}
\label{eq:prop_4_5_4}
\begin{tikzpicture}[xscale=3.25, yscale=1.5]
	\node (A) at (1, 3) {$\HH^0(R, M) \otimes \HH_0(S, S)$};
	\node (B) at (0, 2) {$\HH^0(S, \Hom_R(F, F \otimes_R M)) \otimes \HH_0(S, S)$};
	\node (C) at (0, 1) {$\HH_0(S, \Hom_R(F, F \otimes_R M))$};
	\node (D) at (1, 0) {$\HH_0(R, M)$};
	\node (E) at (2, 2) {$\HH^0(R, M) \otimes \HH_0(R, R)$};
	\node (F) at (2, 1) {$\HH_0(R, M)$};

	\draw [->] (A) -- node[above left]{$\cotr \otimes 1$} (B);
	\draw [->] (B) -- node[left]{$\rho$} (C);
	\draw [->] (C) -- node[below left]{$\tr$} (D);
	\draw [->] (A) -- node[above right]{$1 \otimes \tr$} (E);
	\draw [->] (E) -- node[right]{$\rho$} (F);
	\draw [double equal sign distance] (F) -- (D);
\end{tikzpicture}
\end{equation}
The trace on the right side is the trace of $\id_F$, the trace on the left side is the trace of $\ev : (F \triangleright (F \odot M)) \odot F \to F \odot M$, and the cotrace takes $m \in \HH^0(R, M)$ to the homomorphism $x \mapsto x \otimes m$ in
\[
	\HH^0(S, \Hom_R(F, F \otimes_R M)).
\]
If $M = R$, this reduces to the diagram of (\ref{eq:interplay_basic_example}). Symbolically, (\ref{eq:prop_4_5_4}) asserts that if $m \in \HH^0(R, M)$ and $[\varphi] \in \HH_0(S, S)$, then
\[
\tr(\rho(\cotr(m), [\varphi])) = \rho(m, \tr([\varphi])),
\]
which looks like a kind of ``adjointness'' between trace and cotrace.

More generally, suppose we have shadows $\sh{-}$ and $\bluesh{-}$ and a coshadow $\bluelsh{-}$, all taking values in the same monoidal category. Suppose also that there are maps
\[
\rho_{M, N} : \bluelsh{M} \otimes \bluesh{N} \to \sh{M \odot N}
\]
which are natural in $M$ and $N$. If $\rho$ is appropriately compatible with the cyclicity isomorphisms $\theta$ for the coshadow and shadows, then cotraces with respect to $\bluelsh{-}$ and traces with respect to $\bluesh{-}$ and $\sh{-}$ satisfy the same sort of adjointness that Lipman's traces and cotraces do. We record the necessary compatibility in the following definition.

\begin{defn}
\label{defn:pairing}
A \KEYWORD{pairing} $\rho$ between a coshadow $\bluelsh{-}$, a shadow $\bluesh{-}$, and another shadow $\sh{-}$, all taking values in the same monoidal category, is a family of maps $\rho_{M, N} : \bluelsh{M} \otimes \bluesh{N} \to \sh{M \odot N}$ which are natural in $M$ and $N$ and such that the following diagram commutes whenever it makes sense:
\[\begin{tikzcd}
	\bluelsh{Z \triangleleft X} \otimes \bluesh{Y \odot X}
		\arrow[r, "\theta \otimes \theta", "\cong"']
		\arrow[d, "\rho"']
	& \bluelsh{X \triangleright Z} \otimes \bluesh{X \odot Y}
		\arrow[d, "\rho"]
	\\
	\sh{(Z \triangleleft X) \odot Y \odot X}
		\arrow[d, "\theta"', "\cong"]
	& \sh{(X \triangleright Z) \odot X \odot Y}
		\arrow[d, "\sh{\ev \odot 1}", "\cong"']
	\\
	\sh{X \odot (Z \triangleleft X) \odot Y}
		\arrow[r, "\sh{\ev \odot 1}"']
	& \sh{Z \odot Y}
\end{tikzcd}\]
\end{defn}

With $\HH^0$ as the coshadow and $\HH_0$ playing the role of both shadows, the maps of Example~\ref{ex:pairing_example} form a pairing. We are finally in a position to state and prove a precise version of Theorem~\ref{thm:main_result_intro}.

\begin{thm}
\label{thm:interplay}
Let $\mathbf{T}$ be a monoidal category and $\mathscr{B}$ a closed bicategory with shadow functors $\sh{-}$ and $\bluesh{-}$ and coshadow $\bluelsh{-}$, all with target $\mathbf{T}$. Suppose also that there is a pairing $\rho : \bluelsh{-} \otimes \bluesh{-} \to \sh{- \odot -}$. Let $F$ and $G$ be right dualizable 1-cells in $\mathscr{B}$, and let
\begin{align*}
	\xi &: Q \odot F \to F \odot P \\
	\gamma &: F \triangleright M \to N \triangleleft F \\
	\zeta &: N \odot Q \odot F \odot G \to F \odot G \odot Z \\
	\delta &: M \odot P \odot G \to G \odot Z
\end{align*}
be 2-cells in $\mathscr{B}$ such that the following commutes:
\[\begin{tikzcd}
	F \odot (F \triangleright M) \odot Q \odot F \odot G
		\arrow[r, "1 \odot \gamma \odot 1^3"]
		\arrow[d, "1^2 \odot \xi \odot 1"']
	& F \odot (N \triangleleft F) \odot Q \odot F \odot G
		\arrow[d, "\ev \odot 1^3"]
	\\
	F \odot (F \triangleright M) \odot F \odot P \odot G
		\arrow[d, "1 \odot \ev \odot 1^2"']
	& N \odot Q \odot F \odot G
		\arrow[d, "\zeta"]
	\\
	F \odot M \odot P \odot G
		\arrow[r, "1 \odot \delta"']
	& F \odot G \odot Z
\end{tikzcd}\]
Then the following commutes:
\[\begin{tikzcd}[column sep=1.5cm]
	& \bluelsh{M} \otimes \bluesh{Q}
		\arrow[dl, "\cotr_F(\gamma) \otimes 1"']
		\arrow[dr, "1 \otimes \tr_F(\xi)"]
	\\
	\bluelsh{N} \otimes \bluesh{Q}
		\arrow[d, "\rho"']
	&& \bluelsh{M} \otimes \bluesh{P}
		\arrow[d, "\rho"]
	\\
	\sh{N \otimes Q}
		\arrow[dr, "\tr_{F \odot G}(\zeta)"']
	&& \sh{M \odot P}
		\arrow[dl, "\tr_G(\delta)"]
	\\
	& \sh{Z}
\end{tikzcd}\]
\end{thm}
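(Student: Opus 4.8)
The plan is to prove the hexagon by a single large commutative diagram, almost all of whose cells are of routine type --- naturality of $\rho$, naturality of $\theta$, $t$, $a$, $\overline{r}$, and $\overline{l}$, and functoriality of $\odot$, $\triangleright$, and $\triangleleft$ --- with only three load-bearing cells: the pairing axiom (Definition~\ref{defn:pairing}), the triangle identities for $F$ and $G$, and the hypothesized square relating $\xi$, $\gamma$, $\zeta$, $\delta$. First I would unfold all four (co)traces using Definitions~\ref{defn:bicategorical_trace} and~\ref{defn:cotrace}; for the trace $\tr_{F \odot G}(\zeta)$ on the left leg I would use the explicit coevaluation and evaluation for the composite dual pair $(F \odot G,\, G^* \odot F^*)$ furnished by Proposition~\ref{prop:composite_of_dual_pairs}, so that its $\eta$ and $\varepsilon$ split into an $F$-part followed by a $G$-part. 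It is convenient to leave the hom-objects $F \triangleright M$ and $N \triangleleft F$ intact inside $\cotr_F(\gamma)$, unwinding them via Corollary~\ref{cor:hom_from_dualizable} only where doing so is harmless.

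The second step is to transport $\rho$ past the (co)traces. Each of $\cotr_F(\gamma)$ and $\tr_F(\xi)$ is a composite of maps of two kinds: images under $\bluelsh{-}$ (resp.~$\bluesh{-}$) of 2-cells of $\mathscr{B}$, and cyclicity isomorphisms $\theta$. Using naturality of $\rho$ in each of its variables, I would slide $\rho$ through all the maps of the first kind, turning them into images under $\sh{-}$ of 2-cells of $\mathscr{B}$; after this, both legs of the hexagon --- read starting from $\rho$ --- become $\rho$ followed by purely $\sh{-}$-level data, \emph{except} for two stray cyclicity isomorphisms: the $\theta$ for the coshadow sitting inside $\cotr_F(\gamma)$ and the $\theta$ for the shadow $\bluesh{-}$ sitting inside $\tr_F(\xi)$. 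Neither of these passes $\rho$ by naturality alone.

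Those two stray cyclicities are precisely what Definition~\ref{defn:pairing} is designed to dispose of: the pairing square trades a coshadow-cyclicity together with a $\bluesh{-}$-cyclicity, post-composed with $\rho$, for a $\sh{-}$-cyclicity together with $\sh{-}$ of evaluation maps (post-composed with $\rho$). I expect a single application of this square --- with the auxiliary 1-cells of Definition~\ref{defn:pairing} chosen so that the relevant hom-objects and tensor products match the factors built from $F$ and $F^*$, and with a unit 1-cell inserted for whichever of the two cyclicities is not turned over at that spot, so that its $\theta$ degenerates to a unitor --- to discharge both obstructions. After this, each leg of the hexagon equals $\rho$ followed by a composite built from $\sh{-}$ of 2-cells of $\mathscr{B}$, $\sh{-}$-cyclicity isomorphisms, and $\sh{-}$ of (co)evaluations, and it remains only to check that these two $\sh{-}$-level maps $\sh{M \odot Q} \to \sh{Z}$ agree. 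Using functoriality of $\sh{-}$ together with the triangle identities for $F$ and $G$ to cancel the coevaluation/evaluation pairs that the (co)traces introduced, this reduces to the hypothesized square, to which one applies $\sh{-}$ to close the diagram.

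The main obstacle will lie in these last two steps: arranging the expansions so that the stray coshadow- and shadow-cyclicities present themselves in exactly the rigid configuration demanded by Definition~\ref{defn:pairing}, and then recognizing the residual $\sh{-}$-level identity as $\sh{-}$ applied to the hypothesized square. A further complication is that the left leg carries a single trace $\tr_{F \odot G}(\zeta)$ while the right leg carries two separate traces $\tr_F(\xi)$ and $\tr_G(\delta)$ straddling $\rho$; the pairing $\rho$ has to act as the \emph{hinge} that transfers the $F$-part of the left-hand trace across to the $\bluesh{-}$ side, and Theorem~\ref{thm:composite_of_dual_pairs_traces} together with its cotrace analogue Proposition~\ref{prop:cotrace_composite_of_dual_pairs} are useful for keeping this reorganization under control. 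As always, the clasped string diagrams mentioned in Section~\ref{sec:closed_bicategories} provide a dependable guide to the shape of the commutative diagram, though they do not constitute a proof on their own.
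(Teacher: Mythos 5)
Your proposal is correct and takes essentially the same route as the paper, whose proof is exactly such a single large pasting diagram: the outer boundary is the two legs $\tr_{F \odot G}(\zeta) \circ \rho \circ (\cotr_F(\gamma) \otimes 1)$ and $\tr_G(\delta) \circ \rho \circ (1 \otimes \tr_F(\xi))$ (with the composite dual pair for $F \odot G$), the generic cells commute by naturality of $\rho$, $\theta$, and $\ev$ and functoriality of $\odot$ and $\otimes$, and the essential cells are the pairing square of Definition~\ref{defn:pairing}, the dual-pair (triangle-identity) data, and the hypothesized square relating $\xi$, $\gamma$, $\zeta$, $\delta$. One small refinement: the pairing axiom is applied with both cyclicities genuinely nontrivial (both move $F^*$, e.g.\ $X = F^*$ with $Z$ and $Y$ built from $N \triangleleft F$ and $Q$, $F$, $P$), so no degenerate unit-1-cell instance is needed.
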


\begin{proof}
In the diagram in Figure~\ref{fig:interplay}, the left-hand side is
\[
	\tr_G(\delta) \circ \rho \circ (1 \otimes \tr_F(\xi))
\]
and the right-hand side is
\[
	\tr_{F \odot G}(\zeta) \circ \rho \circ (\cotr_F(\gamma) \otimes 1).
\]
We follow the convention discussed at the start of Section~\ref{sec:cotrace_basic_properties}, omitting the symbol $\odot$. Every unlabeled square commutes because of naturality of $\rho$, $\theta$, or $\ev$ or functoriality of $- \odot -$ or $- \otimes -$.
\end{proof}

\begin{figure}[h]
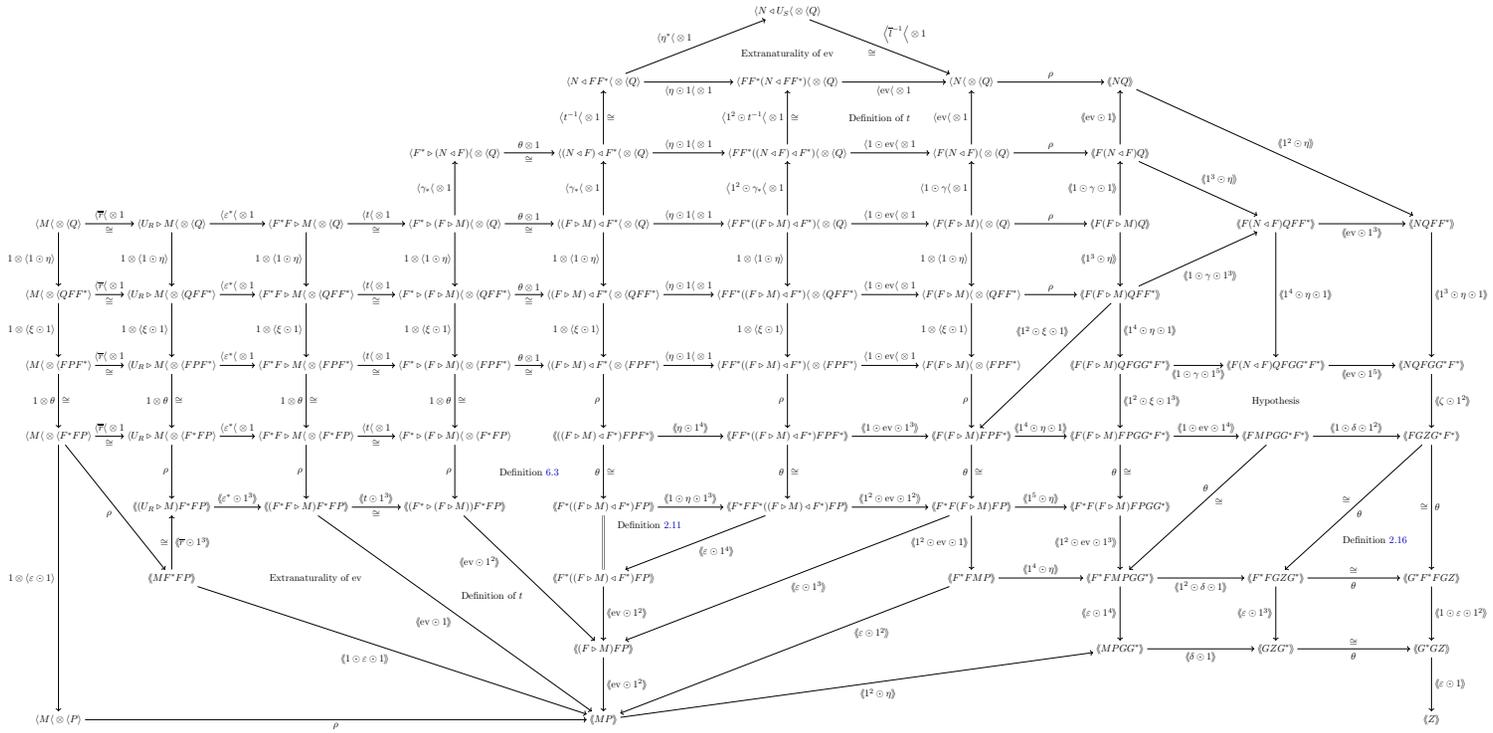

	\centering
	\resizebox{!}{\dimexpr\textheight-0.26in}{\rotatebox{90}{
		\input diagram_interplay.tex
	}}
	\caption{Diagram for Theorem \ref{thm:interplay}}
	\label{fig:interplay}
\end{figure}

The situation in which Lipman's result arises is as follows. If we let $Q$, $P$, and $G$ be unit 1-cells, and let $Z = M$, $\xi = \id_F$, and $\delta = \id_M$, then the hypothesis of Theorem~\ref{thm:interplay} reduces to the following:
\[\begin{tikzcd}
	F \odot (F \triangleright M) \odot F
		\arrow[r, "1 \odot \gamma \odot 1"]
		\arrow[d, "1 \odot \ev"']
	& F \odot (N \triangleleft F) \odot F
		\arrow[d, "\ev \odot 1"]
	\\
	F \odot M
	& N \odot F
		\arrow[l, "\zeta"]
\end{tikzcd}\]
For an example of a collection of objects and maps making this diagram commute, start with any 1-cells $F : S \pto R$ and $M : R \pto R$ (with $F$ right dualizable), and let $N = F \triangleright (F \odot M)$. Then let $\zeta$ be
\[
	\ev : (F \triangleright (F \odot M)) \odot F \to F \odot M
\]
and let
\[
	\gamma : F \triangleright M \to (F \triangleright (F \odot M)) \triangleleft F
\]
be the adjoint of the map $\mu : F \odot (F \triangleright M) \to F \triangleright (F \odot M)$ of Section~\ref{sec:duality_closed_bicat}. The hypothesis of Theorem~\ref{thm:interplay} is satisfied since the following diagram commutes:
\[\begin{tikzcd}
	F \odot (F \triangleright M) \odot F
		\arrow[r, "1 \odot \overline{\mu} \odot 1"]
		\arrow[dr, "\mu \odot 1"]
		\arrow[d, "1 \odot \ev"']
	& F \odot ((F \triangleright (F \odot M)) \triangleleft F) \odot F
		\arrow[d, "\ev \odot 1"]
	\\
	F \odot M
	& (F \triangleright (F \odot M)) \odot F
		\arrow[l, "\ev"']
\end{tikzcd}\]

\begin{example}
Start with a ring $R$, a finitely generated projective right $R$-module $F$, and an $(R, R)$-bimodule $M$. Consider the setup above: $N := F \triangleright (F \odot M) = \Hom_R(F, F \otimes_R M)$, $\zeta = \ev$, $\gamma = \overline{\mu}$, $\xi = \id_F$, $\delta = \id_M$. Choosing $\HH^0$ and $\HH_0$ as the (co)shadows and using the pairing of Example~\ref{ex:pairing_example}, Theorem~\ref{thm:interplay} recovers (\ref{eq:prop_4_5_4}), which is \cite[Proposition 4.5.4]{Lipman1987} in the case $n = 0$.
\end{example}

\section{Functoriality of cotrace}
\label{sec:functoriality}

One of the most important properties of trace is that it is preserved by lax functors, or at least those which preserve the dual pair relevant to the trace in question \cite[Proposition~8.3]{Shadows_and_traces}. We state a similar result for cotraces, after describing the structure of a lax functor between closed bicategories.

A lax functor $F : \mathscr{B} \to \mathscr{C}$ between bicategories is compatible with the horizontal composition in $\mathscr{B}$ and $\mathscr{C}$, in the sense that it comes equipped with coherence 2-cells $F(M) \odot F(N) \to F(M \odot N)$ in $\mathscr{C}$ for each pair of 1-cells $M$ and $N$ in $\mathscr{B}$. This might lead us to ask a lax functor between \emph{closed} bicategories to include similar compatibility with the internal hom-functors, but in fact this is automatic: for example, we get a map $F(M \triangleright N) \to F(M) \triangleright F(N)$ as the transpose of
\[
	F(M \triangleright N) \odot F(M) \to F((M \triangleright N) \odot M) \xrightarrow{F(\ev)} F(N).
\]
Thus a lax functor between closed bicategories is nothing more than a lax functor between the underlying bicategories. However, the functoriality result we want and the concomitant notion of lax coshadow functor use the coherence 2-cells for $\triangleright$ and $\triangleleft$ rather than the ones for $\odot$, so we present a definition of lax functor between closed bicategories in terms of the former.

In the following definition we make use of the transpose $\overline{\circ} : Y \triangleright Z \to ((X \triangleright Y) \triangleright (X \triangleright Z))$ of the ``composition'' map $\circ : (Y \triangleright Z) \odot (X \triangleright Y) \to X \triangleright Z$, which itself is is the transpose of
\[
	(Y \triangleright Z) \odot (X \triangleright Y) \odot X \xrightarrow{1 \odot \ev} (Y \triangleright Z) \odot Y \xrightarrow{\ev} Z.
\]

\begin{defn}
\label{defn:lax_closed_functor}
Let $\mathscr{B}$ and $\mathscr{C}$ be closed bicategories. A \KEYWORD{lax closed functor} $F : \mathscr{B} \to \mathscr{C}$ is
\begin{itemize}
	\item A function $F_0 : \mathrm{ob} \mathscr{B} \to \mathrm{ob} \mathscr{C}$
	\item For each $R, S \in \mathrm{ob} \mathscr{B}$, a functor $F_{R,S} : \mathscr{B}(R, S) \to \mathscr{C}(F_0(R), F_0(S))$
	\item Natural transformations $c : F_{R,S}(N \triangleright P) \to F_{S,T}(N) \triangleright F_{R,T}(P)$
	\item Natural transformations $c : F_{S,T}(P \triangleleft M) \to F_{R,T}(P) \triangleleft F_{R,S}(M)$
	\item Maps $i : U_{F_0(R)} \to F_{R,R}(U_R)$
\end{itemize}
such that the following diagrams commute whenever they make sense (we usually suppress the subscripts of $F$ when they are clear from context):
\[\begin{tikzcd}[column sep=0.95em]
	F(N \triangleright P)
		\arrow[r, "\overline{\circ}"]
		\arrow[d, "c"']
	& F((M \triangleright N) \triangleright (M \triangleright P))
		\arrow[r, "c"]
	& F(M \triangleright N) \triangleright F(M \triangleright P)
		\arrow[d, "c_*"]
	\\
	F(N) \triangleright F(P)
		\arrow[r, "\overline{\circ}"]
	& (F(M) \triangleright F(N)) \triangleright (F(M) \triangleright F(P))
		\arrow[r, "c^*"]
	& F(M \triangleright N) \triangleright (F(M) \triangleright F(P))
\end{tikzcd}\]
\[\begin{tikzcd}
	U_{F(R)}
		\arrow[r, "i"]
		\arrow[d, "\overline{l}"']
	& F(U_R)
		\arrow[d, "F(\overline{l})"]
	\\
	F(M) \triangleright F(M)
	& F(M \triangleright M)
		\arrow[l, "c"]
\end{tikzcd}
\hspace{1cm}
\begin{tikzcd}
	F(M)
		\arrow[r, "F(\overline{r})", "\cong"']
		\arrow[d, "\overline{r}"', "\cong"]
	& F(U_S \triangleright M)
		\arrow[d, "c"]
	\\
	U_{F(S)} \triangleright F(M)
	& F(U_S) \triangleright F(M)
		\arrow[l, "i^*"]
\end{tikzcd}\]
\end{defn}
\noindent along with similar diagrams for the other hom-functor $- \triangleleft -$, and the following diagram relating the maps $c$ for the two hom-functors:
\[\begin{tikzcd}
	F((M \triangleright N) \triangleleft P)
		\arrow[r, "c"]
		\arrow[d, "F(a)"', "\cong"]
	& F(M \triangleright N) \triangleleft F(P)
		\arrow[r, "c_*"]
	& (F(M) \triangleright F(N)) \triangleleft F(P)
		\arrow[d, "a", "\cong"']
	\\
	F(M \triangleright (N \triangleleft P))
		\arrow[r, "c"']
	& F(M) \triangleright F(N \triangleleft P)
		\arrow[r, "c_*"']
	& F(M) \triangleright (F(N) \triangleleft F(P))
\end{tikzcd}\]

\begin{defn}
\label{defn:lax_coshadow_functor}
Let $\mathscr{B}$ and $\mathscr{C}$ be closed bicategories equipped with coshadows $\lsh{-}_{\mathscr{B}}$ and $\lsh{-}_{\mathscr{C}}$ with target categories $\mathbf{T}$ and $\mathbf{Z}$, respectively. A \KEYWORD{lax coshadow functor} is a lax closed functor $F : \mathscr{B} \to \mathscr{C}$ together with a functor $F_{\cosh} : \mathbf{T} \to \mathbf{Z}$ and a natural transformation
\[
\phi : F_{\cosh} \circ \lsh{-}_{\mathscr{B}} \to \lsh{-}_{\mathscr{C}} \circ F
\]
such that the following diagram commutes whenever it makes sense:
\[\begin{tikzcd}
	F_{\cosh}\lsh{M \triangleright N}
		\arrow[r, "F_{\cosh}(\theta)", "\cong"']
		\arrow[d, "\phi"']
	& F_{\cosh}\lsh{N \triangleleft M}
		\arrow[d, "\phi"]
	\\
	\lsh{F(M \triangleright N)}
		\arrow[d, "\lsh{c}"']
	& \lsh{F(N \triangleleft M)}
		\arrow[d, "\lsh{c}"]
	\\
	\lsh{F(M) \triangleright F(N)}
		\arrow[r, "\theta"', "\cong"]
	& \lsh{F(N) \triangleleft F(M)}
\end{tikzcd}\]
\end{defn}

We will make use of the following result in proving that lax closed functors preserve cotraces.

\begin{lem}
\label{lem:closed_functor_and_tensor_hom_adjunction}
If $F : \mathscr{B} \to \mathscr{C}$ is a lax closed functor, the following commutes:
\[\begin{tikzcd}
	F((M \odot N) \triangleright P)
		\arrow[r, "c"]
		\arrow[d, "F(t)"', "\cong"]
	& F(M \odot N) \triangleright F(P)
		\arrow[r, "c^*"]
	& (F(M) \odot F(N)) \triangleright F(P)
		\arrow[d, "t", "\cong"']
	\\
	F(M \triangleright (N \triangleright P))
		\arrow[r, "c"']
	& F(M) \triangleright F(N \triangleright P)
		\arrow[r, "c_*"']
	& F(M) \triangleright (F(N) \triangleright F(P))
\end{tikzcd}\]
\end{lem}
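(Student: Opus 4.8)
The plan is to verify commutativity by transposing both legs of the square down to maps into $F(P)$, where no internal hom-objects remain and the two sides can be compared on the nose. Since the terminal vertex is $F(M) \triangleright (F(N) \triangleright F(P))$ and the adjunctions $- \odot F(M) \dashv F(M) \triangleright -$ and $- \odot F(N) \dashv F(N) \triangleright -$ hold in $\mathscr{C}$, two parallel maps into $F(M) \triangleright (F(N) \triangleright F(P))$ agree if and only if their double transposes agree, i.e.\ if and only if the two composites
\[
	F((M \odot N) \triangleright P) \odot F(M) \odot F(N) \longrightarrow F(P)
\]
obtained by tensoring with $\id_{F(M)}$, then $\id_{F(N)}$, and post-composing with the two evaluation maps coincide. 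So the entire problem reduces to computing these two composites and checking they are equal.

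Before computing, I would collect the relevant facts. A lax closed functor in the sense of Definition~\ref{defn:lax_closed_functor} has an underlying $\odot$-compositor $c \colon F(M) \odot F(N) \to F(M \odot N)$ (this is the arrow being applied in the label $c^*$ of the top row), satisfying the usual lax-functor naturality and associativity coherences, and the hom-coherence $c \colon F(A \triangleright B) \to F(A) \triangleright F(B)$ is by construction the transpose of $F(\ev) \circ c$; equivalently $\ev \circ (c \odot \id) = F(\ev) \circ c$, as recalled just before that definition. I would also use the defining property of $t$, namely that its double transpose $\ev \circ (\ev \odot \id) \circ (t \odot \id \odot \id)$ is the double evaluation $((M \odot N) \triangleright P) \odot M \odot N \xrightarrow{\ev} P$, together with naturality (and extranaturality) of $\ev$, naturality of the compositor $c$ in both variables, the lax-associativity coherence $c_{A \odot B, C} \circ (c_{A,B} \odot \id) = c_{A, B \odot C} \circ (\id \odot c_{B,C})$ (up to the suppressed associators for $\odot$), and functoriality of $\odot$ and $F$.

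For the top-right leg ($c$, then $c^*$, then $t$): peel off $t$ first, using its defining property to rewrite the trailing pair of evaluations together with $t$ as a single evaluation for $(F(M) \odot F(N)) \triangleright F(P)$; then slide the compositor $c$ past that evaluation by (extra)naturality of $\ev$, converting $c^*$ into $\id \odot c$; and finally apply $\ev \circ (c \odot \id) = F(\ev) \circ c$ to the outermost hom-coherence, landing on $F(\ev_{M \odot N, P}) \circ c_{(M \odot N) \triangleright P,\, M \odot N} \circ (\id \odot c_{M,N})$. For the left-bottom leg ($F(t)$, then $c$, then $c_*$): use naturality of $\ev$ to move the two hom-coherences $c_*$ and $c$ outside the evaluations, each time replacing $\ev \circ (c \odot \id)$ by $F(\ev) \circ c$; use naturality of the compositor $c$ to pull $F(t)$ and the resulting $F(\ev)$-terms inside $F$; merge the two compositor instances by the lax-associativity coherence; and then recognize that what remains inside $F$, namely $\ev_{N,P} \circ (\ev_{M, N \triangleright P} \odot \id) \circ (t \odot \id \odot \id)$, is precisely $F$ of the double evaluation by the defining property of $t$. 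This leg also reduces to $F(\ev_{M \odot N, P}) \circ c_{(M \odot N) \triangleright P,\, M \odot N} \circ (\id \odot c_{M,N})$, so the two legs agree.

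The main obstacle is bookkeeping rather than conceptual: one must track carefully the associators and unitors for $\odot$ in $\mathscr{C}$ (suppressed throughout, per the paper's convention) and be disciplined about invoking naturality versus extranaturality of $\ev$ in the correct argument at each step. The only other point requiring a word of care is that a lax closed functor really does carry an $\odot$-compositor obeying the ordinary lax-functor axioms, with the hom-coherences related to it as above; but this is exactly the content of the discussion preceding Definition~\ref{defn:lax_closed_functor}, so it may be used freely.
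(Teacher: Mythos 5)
Your proposal is correct and takes essentially the same route as the paper: the paper's proof consists precisely of the observation that both legs are the double transpose (via the tensor-hom adjunctions for $F(M)$ and then $F(N)$) of
$F((M \odot N) \triangleright P) \odot F(M) \odot F(N) \xrightarrow{c \circ (1 \odot c)} F(((M \odot N) \triangleright P) \odot M \odot N) \xrightarrow{F(\ev)} F(P)$,
which is exactly the composite your two reductions land on. Your write-up merely spells out the intermediate steps (defining property of $t$, (extra)naturality of $\ev$, the relation $\ev \circ (c \odot \id) = F(\ev) \circ c$, and lax associativity of the compositor) that the paper leaves implicit.
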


\begin{proof}
Both sides are the transpose (via the tensor-hom adjunction for $F(M)$) of the transpose (via the tensor-hom adjunction for $F(N)$) of
\[
F((M \odot N) \triangleright P) \odot F(M) \odot F(N) \xrightarrow{c \circ (1 \odot c)} F(((M \odot N) \triangleright P) \odot M \odot N) \xrightarrow{F(\ev)} F(P).
\]
\end{proof}

\begin{prop}
\label{prop:cotrace_functoriality}
Let $F : \mathscr{B} \to \mathscr{C}$ be a lax coshadow functor and $M \in \mathscr{B}(R, S)$ a right dualizable 1-cell with right dual $M^*$.
\begin{enumerate}
	\item \cite[Proposition 4.3.6]{Ponto_thesis} If the maps $c : F(M) \odot F(M^*) \to F(M \odot M^*)$ and $i : U_{F(S)} \to F(U_S)$ are isomorphisms, then $F(M)$ is right dualizable with right dual $F(M^*)$.
	\item If, furthermore, the map $c_{M,Q} : F(M \triangleright Q) \to F(M) \triangleright F(Q)$ is an isomorphism, then for any 2-cell $f : M \triangleright Q \to P \triangleleft M$, the following commutes:
	\[\begin{tikzcd}[column sep=3.5cm]
		F\lsh{Q}_{\mathscr{B}}
			\arrow[r, "F(\cotr(f))"]
			\arrow[d, "\phi"']
		& F\lsh{P}_{\mathscr{B}}
			\arrow[d, "\phi"]
		\\
		\lsh{F(Q)}_{\mathscr{C}}
			\arrow[r, "\cotr(c_{M,P} \circ F(f) \circ c_{M,Q}^{-1})"']
		& \lsh{F(P)}_{\mathscr{C}}
	\end{tikzcd}\]
\end{enumerate}
\end{prop}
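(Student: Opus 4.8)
The plan is to mimic the proof of the trace version, \cite[Proposition~8.3]{Shadows_and_traces}, by assembling a single large commutative diagram whose two outer composites are the two legs of the square to be proved. First I would record the dual pair $(F(M), F(M^*))$ furnished by part~(1): its coevaluation is $U_{F(R)} \xrightarrow{i} F(U_R) \xrightarrow{F(\eta)} F(M \odot M^*) \xrightarrow{c^{-1}} F(M) \odot F(M^*)$ and its evaluation is $F(M^*) \odot F(M) \xrightarrow{c} F(M^* \odot M) \xrightarrow{F(\varepsilon)} F(U_S) \xrightarrow{i^{-1}} U_{F(S)}$, where $c$ here denotes the coherence 2-cells for $\odot$. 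The top row of the diagram is $F_{\cosh}$ applied to the composite of Definition~\ref{defn:cotrace} computing $\cotr(f)$; the bottom row is the composite of Definition~\ref{defn:cotrace} computing $\cotr(c_{M,P} \circ F(f) \circ c_{M,Q}^{-1})$ relative to the dual pair just described; the two outer verticals are instances of $\phi$; and each intermediate vertical is $\phi$, at the appropriate object of $\mathbf{T}$, followed by $\lsh{-}_{\mathscr{C}}$ of a string of coherence maps $c$ (for $\triangleright$, $\triangleleft$, and $\odot$) and $i$ that push $F$ all the way inside the nested hom-object. For instance, the vertical at $F_{\cosh}\lsh{M^* \triangleright (M \triangleright Q)}_{\mathscr{B}}$ is $\phi$ followed by $\lsh{c}_{\mathscr{C}}$ into $\lsh{F(M^*) \triangleright F(M \triangleright Q)}_{\mathscr{C}}$ and then $\lsh{c_*}_{\mathscr{C}}$ into $\lsh{F(M^*) \triangleright (F(M) \triangleright F(Q))}_{\mathscr{C}}$.

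The diagram decomposes into eight squares, one per arrow of the cotrace composite, and I would verify each in turn. The square for $\overline{r}$ at the start and the square for $\overline{l}^{-1}$ at the end commute by naturality of $\phi$ together with the coherence axioms of Definition~\ref{defn:lax_closed_functor} relating $i$, $c$, and $\overline{r}$ (respectively the $\triangleleft$-analogue relating $i$, $c$, and $\overline{l}$). The squares for $\varepsilon^*$ and $\eta^*$ reduce, after expanding the evaluation $\varepsilon'$ and coevaluation $\eta'$ recorded above and cancelling the factors of $i$ and $c$ they contain, to naturality of the coherence maps $c$ in their contravariant variables. The square for $f_*$ reduces, via naturality of $c$ and functoriality of $F(M^*) \triangleright -$, to the identity $g \circ c_{M,Q} = c_{M,P} \circ F(f)$, which is immediate from $g = c_{M,P} \circ F(f) \circ c_{M,Q}^{-1}$ and invertibility of $c_{M,Q}$; this is the one place the last hypothesis is needed. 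The two transpose squares, for $t$ and $t^{-1}$, commute by Lemma~\ref{lem:closed_functor_and_tensor_hom_adjunction} and its evident $\triangleleft$-analogue. Finally, the $\theta$-square commutes by the coherence axiom of a lax coshadow functor (Definition~\ref{defn:lax_coshadow_functor}) together with naturality of the cyclicity isomorphism $\theta$ of the coshadow on $\mathscr{C}$ in both variables, the latter being what lets the leftover ``push $F$ inside'' maps slide past $\theta$.

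I expect the main difficulty to be organizational rather than conceptual: arranging the intermediate verticals so that every square becomes exactly one of the listed naturality or coherence statements, and keeping straight which of the several coherence maps $c$ (for $\odot$, $\triangleright$, or $\triangleleft$) appears in each spot, especially around the two ``endpoint'' stages where $\varepsilon'$ and $\eta'$ unwind. The $\theta$-square is the conceptual heart, since it is the only point at which the lax coshadow functor structure is used beyond the underlying lax closed functor; the transpose squares, which invoke Lemma~\ref{lem:closed_functor_and_tensor_hom_adjunction}, are the other substantive ingredient. As with the other results in this section, the final write-up would display this diagram as a figure.
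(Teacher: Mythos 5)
Your proposal is correct and takes essentially the same route as the paper: the paper's proof likewise assembles one large diagram with $\phi \circ F(\cotr(f))$ along one side and $\cotr(c_{M,P} \circ F(f) \circ c_{M,Q}^{-1}) \circ \phi$ along the other, built from the dual pair of part~(1), with the constituent squares commuting by naturality of $\phi$, $c$, and $\theta$ together with the coherence axioms of Definitions~\ref{defn:lax_closed_functor} and~\ref{defn:lax_coshadow_functor} and Lemma~\ref{lem:closed_functor_and_tensor_hom_adjunction}. Your square-by-square accounting (including the observation that the $\triangleleft$-analogue of Lemma~\ref{lem:closed_functor_and_tensor_hom_adjunction} is needed, and that invertibility of $c_{M,Q}$ enters only at the $f_*$ square) is consistent with the diagram of Figure~\ref{fig:cotrace_functoriality}.
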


\begin{proof}
Part (i) is \cite[Proposition 8.3(i)]{Shadows_and_traces}. For part (ii), the diagram in Figure~\ref{fig:cotrace_functoriality} has $\phi \circ F(\cotr(f))$ along the left side and $\cotr(cF(f)c^{-1}) \circ \phi$ along the right side. Every unlabeled square commutes because of naturality of $\phi$, $c$, or $\theta$.
\end{proof}

\begin{figure}[h]
	\centering
	\resizebox{\textwidth}{!}{
		\begin{tikzpicture}[xscale=3.25, yscale=2]
	\newcommand{\Xa}{0}
	\newcommand{\Xb}{\Xa+1.2}
	\newcommand{\Xc}{\Xb+1.0}
	\newcommand{\Xcc}{\Xc+0.5}
	\newcommand{\Xd}{\Xc+1.0}
	\newcommand{\Xe}{\Xd+1.2}

	\node(A) at (\Xa, 9){$F\lsh{Q}$};
	\node(B) at (\Xb, 9){$\lsh{FQ}$};
	\node(C) at (\Xc, 9){$\lsh{U_{FS} \triangleright FQ}$};
	\node(D) at (\Xd, 9){$\lsh{FU_S \triangleright FQ}$};
	\node(E) at (\Xe, 9){$\lsh{F(M^* \odot M) \triangleright FQ}$};

	\node(F) at (\Xa, 8){$F\lsh{U_S \triangleright Q}$};
	\node(G) at (\Xb, 8){$\lsh{F(U_S \triangleright Q)}$};
	\node(H) at (\Xe, 8){$\lsh{(FM^* \odot FM) \triangleright FQ}$};

	\node(I) at (\Xa, 7){$F\lsh{(M^* \odot M) \triangleright Q}$};
	\node(J) at (\Xb, 7){$\lsh{F((M^* \odot M) \triangleright Q)}$};
	\node(K) at (\Xe, 7){$\lsh{FM^* \triangleright (FM \triangleright FQ)}$};

	\node(M) at (\Xa, 6){$F\lsh{M^* \triangleright (M \triangleright Q)}$};
	\node(N) at (\Xb, 6){$\lsh{F(M^* \triangleright (M \triangleright Q))}$};
	\node(O) at (\Xe, 6){$\lsh{FM^* \triangleright F(M \triangleright Q)}$};

	\node(P) at (\Xa, 5){$F\lsh{M^* \triangleright (P \triangleleft M)}$};
	\node(Q) at (\Xb, 5){$\lsh{F(M^* \triangleright (P \triangleleft M))}$};
	\node(R) at (\Xe, 5){$\lsh{FM^* \triangleright F(P \triangleleft M)}$};

	\node(S) at (\Xa, 4){$F\lsh{(P \triangleleft M) \triangleleft M^*}$};
	\node(T) at (\Xb, 4){$\lsh{F((P \triangleleft M) \triangleleft M^*)}$};
	\node(U) at (\Xcc, 4){$\lsh{F(P \triangleleft M) \triangleleft FM^*}$};
	\node(V) at (\Xe, 4){$\lsh{FM^* \triangleright (FP \triangleleft FM)}$};

	\node(W) at (\Xa, 3){$F\lsh{P \triangleleft (M \odot M^*)}$};
	\node(X) at (\Xb, 3){$\lsh{F(P \triangleleft (M \odot M^*))}$};
	\node(Y) at (\Xe, 3){$\lsh{(FP \triangleleft FM) \triangleleft FM^*}$};

	\node(AA) at (\Xe, 2){$\lsh{FP \triangleleft (FM \odot FM^*)}$};

	\node(AB) at (\Xa, 1){$F\lsh{P \triangleleft U_R}$};
	\node(AC) at (\Xb, 1){$\lsh{F(P \triangleleft U_R)}$};
	\node(AD) at (\Xe, 1){$\lsh{FP \triangleleft F(M \odot M^*)}$};

	\node(AE) at (\Xa, 0){$F\lsh{P}$};
	\node(AF) at (\Xb, 0){$\lsh{FP}$};
	\node(AG) at (\Xcc, 0){$\lsh{FP \triangleleft U_{FR}}$};
	\node(AH) at (\Xe, 0){$\lsh{FP \triangleleft FU_R}$};

	\node at (barycentric cs:N=1,H=1){\small Lemma \ref{lem:closed_functor_and_tensor_hom_adjunction}};
	\node at (barycentric cs:T=1,AA=1){\small Lemma \ref{lem:closed_functor_and_tensor_hom_adjunction}};

	\draw [->] (A) -- node[above]{$\phi$} (B);
	\draw [->] (F) -- node[above]{$\phi$} (G);
	\draw [->] (I) -- node[above]{$\phi$} (J);
	\draw [->] (M) -- node[above]{$\phi$} (N);
	\draw [->] (P) -- node[above]{$\phi$} (Q);
	\draw [->] (S) -- node[above]{$\phi$} (T);
	\draw [->] (W) -- node[above]{$\phi$} (X);
	\draw [->] (AB) -- node[above]{$\phi$} (AC);
	\draw [->] (AE) -- node[above]{$\phi$} (AF);
	\draw [->] (B) -- node[above]{$\lsh{\overline{r}}$} node[below]{$\cong$} (C);
	\draw [->] (C) -- node[above]{$\lsh{(i^{-1})^*}$} node[below]{$\cong$} (D);
	\draw [->] (D) -- node[above]{$\lsh{(F\varepsilon)^*}$} (E);
	\draw [->] (G) -- node[below right]{$\lsh{c}$} (D);
	\draw [->] (J) -- node[below right]{$\lsh{c}$} (E);
	\draw [->] (E) -- node[right]{$\lsh{c^*}$} (H);
	\draw [->] (H) -- node[right]{$\lsh{t}$} node[left]{$\cong$} (K);
	\draw [->] (K) -- node[right]{$\lsh{c^{-1}_*}$} node[left]{$\cong$} (O);
	\draw [->] (N) -- node[above]{$\lsh{c}$} (O);
	\draw [->] (A) -- node[left]{$F\lsh{\overline{r}}$} node[right]{$\cong$} (F);
	\draw [->] (B) -- node[left]{$\lsh{F\overline{r}}$} node[right]{$\cong$} (G);
	\draw [->] (F) -- node[left]{$F\lsh{\varepsilon^*}$} (I);
	\draw [->] (G) -- node[left]{$\lsh{F(\varepsilon^*)}$} (J);
	\draw [->] (I) -- node[left]{$F\lsh{t}$} node[right]{$\cong$} (M);
	\draw [->] (J) -- node[left]{$\lsh{Ft}$} node[right]{$\cong$} (N);
	\draw [->] (M) -- node[left]{$F\lsh{f_*}$} (P);
	\draw [->] (N) -- node[left]{$\lsh{F(f_*)}$} (Q);
	\draw [->] (O) -- node[left]{$\lsh{(Ff)_*}$} (R);
	\draw [->] (Q) -- node[above]{$\lsh{c}$} (R);
	\draw [->] (P) -- node[left]{$F(\theta)$} node[right]{$\cong$} (S);
	\draw [->] (T) -- node[above]{$\lsh{c}$} (U);
	\draw [->] (R) -- node[above left]{$\theta$} node[below right]{$\cong$} (U);
	\draw [->] (R) -- node[right]{$\lsh{c_*}$} (V);
	\draw [->] (V) -- node[right]{$\theta$} node[left]{$\cong$} (Y);
	\draw [->] (U) -- node[below left]{$\lsh{c_*}$} (Y);
	\draw [->] (S) -- node[left]{$F\lsh{t^{-1}}$} node[right]{$\cong$} (W);
	\draw [->] (Y) -- node[right]{$\lsh{t^{-1}}$} node[left]{$\cong$} (AA);
	\draw [->] (T) -- node[left]{$\lsh{F(t^{-1})}$} node[right]{$\cong$} (X);
	\draw [->] (X) -- node[above right]{$\lsh{c}$} (AD);
	\draw [->] (AA) -- node[right]{$\lsh{(c^{-1})^*)}$} node[left]{$\cong$} (AD);
	\draw [->] (W) -- node[left]{$F\lsh{\eta^*}$} (AB);
	\draw [->] (X) -- node[left]{$\lsh{F(\eta^*)}$} (AC);
	\draw [->] (AC) -- node[above right]{$\lsh{c}$} (AH);
	\draw [->] (AB) -- node[left]{$F\lsh{\overline{l}^{-1}}$} node[right]{$\cong$} (AE);
	\draw [->] (AC) -- node[left]{$\lsh{F(\overline{l}^{-1})}$} node[right]{$\cong$} (AF);
	\draw [->] (AD) -- node[left]{$\lsh{(F\eta)^*}$} (AH);
	\draw [->] (AH) -- node[below]{$\lsh{i^*}$} (AG);
	\draw [->] (AG) -- node[below]{$\lsh{\overline{l}^{-1}}$} node[above]{$\cong$} (AF);

	\node at (barycentric cs:P=1,U=1){Definition \ref{defn:lax_coshadow_functor}};
	\node at (barycentric cs:G=1,C=1.5,B=0.5){Definition \ref{defn:lax_closed_functor}};
	\node at (barycentric cs:AC=1,AF=0.5,AG=1){Definition \ref{defn:lax_closed_functor}};
\end{tikzpicture}
	}
	\caption{Diagram for Proposition~\ref{prop:cotrace_functoriality}}
	\label{fig:cotrace_functoriality}
\end{figure}

\section{Morita invariance}
\label{sec:Morita_invariance}

An important property of Hochschild homology is that it is Morita invariant, meaning $\HH_n(R) \cong \HH_n(S)$ whenever $R$ and $S$ are Morita equivalent rings. Moreover, there is a trace map which is an isomorphism between the Hochschild homologies of $R$ and $S$. In fact, this is an example of a general notion of Morita invariance satisfied by all shadow functors. After reviewing the classical notion of Morita equivalence and its generalization to bicategories and shadows, we will demonstrate that coshadows also satisfy Morita invariance.

\begin{defn}
\label{defn:Morita_equiv_classical}
Rings $R$ and $S$ are \KEYWORD{Morita equivalent} if their module categories $\MOD_R$ and $\MOD_S$ are equivalent.
\end{defn}

If there are bimodules ${}_RP_S$ and ${}_SQ_R$ such that $P \otimes_S Q \cong R$ (as $(R, R)$-bimodules) and $Q \otimes_R P \cong S$ (as $(S, S)$-bimodules), then
\[
- \otimes_R P : \MOD_R \rightleftarrows \MOD_S : - \otimes_S Q
\]
are mutually inverse equivalences of categories, and it turns out that any equivalence between $\MOD_R$ and $\MOD_S$ arises this way. This leads to a notion of Morita equivalence in a bicategory:

\begin{defn}[\cite{CP_THH}]
\label{defn:Morita_equiv}
Two 0-cells $R$ and $S$ in a bicategory $\mathscr{B}$ are \KEYWORD{Morita equivalent} if there are 1-cells $P \in \mathscr{B}(R, S)$ and $Q \in \mathscr{B}(S, R)$ and isomorphisms
\[
	\eta : U_R \xrightarrow{\cong} P \odot Q \qquad\text{and}\qquad \varepsilon : Q \odot P \xrightarrow{\cong} U_S
\]
satisfying the triangle identities.
\end{defn}

This means that $(P, Q)$ is a dual pair, and $\eta^{-1}$ and $\varepsilon^{-1}$ exhibit $(Q, P)$ as a dual pair as well. In fact, it is not necessary to assume that $\eta$ and $\varepsilon$ satisfy the triangle identities; we can replace one of them with another that does satisfy the triangle identities, since an equivalence of categories can always be upgraded to an adjoint equivalence. In the Morita bicategory $\MORITABICAT$, Definition~\ref{defn:Morita_equiv} recovers the classical notion of Morita equivalence (Definition~\ref{defn:Morita_equiv_classical}).

Shadows are Morita invariant:

\begin{prop}[{\cite[Proposition 4.8]{CP_THH}}]
Let $(P, Q)$ be a Morita equivalence between 0-cells $R$ and $S$ in a bicategory $\mathscr{B}$ with a shadow $\sh{-}$. Then $\sh{M} \cong \sh{Q \odot M \odot P}$ for any 1-cell $M \in \mathscr{B}(R, R)$, and moreover, there is a bicategorical trace witnessing this equivalence.
\end{prop}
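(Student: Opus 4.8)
The plan is to exhibit the isomorphism $\sh{M} \cong \sh{Q \odot M \odot P}$ directly as a bicategorical trace, and then to recognize that it is invertible by appealing to the behavior of trace under composition of dual pairs. The starting point is that a Morita equivalence $(P, Q)$ is in particular a dual pair, so $P \in \mathscr{B}(R, S)$ is right dualizable with right dual $Q$, coevaluation $\eta : U_R \to P \odot Q$, and evaluation $\varepsilon : Q \odot P \to U_S$; simultaneously $(Q, P)$ is a dual pair, with coevaluation $\varepsilon^{-1} : U_S \to Q \odot P$ and evaluation $\eta^{-1} : P \odot Q \to U_R$. No dualizability of $M$ itself is needed.

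First I would define an isomorphism $2$-cell $f : M \odot P \to P \odot Q \odot M \odot P$ as the unit isomorphism $M \odot P \cong U_R \odot M \odot P$ followed by $\eta \odot \id_{M \odot P}$, and set
\[
	\alpha := \tr_P(f) : \sh{M} \to \sh{Q \odot M \odot P},
\]
the bicategorical trace of $f$ with respect to the dual pair $(P, Q)$ (Definition~\ref{defn:bicategorical_trace}); by construction this is a bicategorical trace witnessing the asserted equivalence, once we know it is an isomorphism. Symmetrically, let $g : Q \odot M \odot P \odot Q \to Q \odot M$ be the isomorphism built from $\id_{Q \odot M} \odot \eta^{-1}$ and a unit isomorphism, and put $\beta := \tr_Q(g) : \sh{Q \odot M \odot P} \to \sh{M}$, the trace of $g$ with respect to the dual pair $(Q, P)$.

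Next I would check that $\beta \circ \alpha = \id_{\sh M}$ and $\alpha \circ \beta = \id_{\sh{Q \odot M \odot P}}$. By Proposition~\ref{prop:composite_of_dual_pairs}, $P \odot Q$ and $Q \odot P$ are right dualizable, and Theorem~\ref{thm:composite_of_dual_pairs_traces} identifies $\beta \circ \alpha$ with the trace, with respect to $P \odot Q$, of the composite
\[
	M \odot P \odot Q \xrightarrow{f \odot \id_Q} P \odot Q \odot M \odot P \odot Q \xrightarrow{\id_P \odot g} P \odot Q \odot M .
\]
Unwinding $f$ and $g$, this composite inserts a copy of $\eta$ on the left and deletes one on the right, so up to the associators and unitors of $\mathscr{B}$ it is the canonical isomorphism $M \odot (P \odot Q) \cong (P \odot Q) \odot M$. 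Taking for $P \odot Q$ the dual pair structure supplied by Proposition~\ref{prop:composite_of_dual_pairs} (so that its coevaluation and evaluation are assembled from $\eta, \varepsilon, \varepsilon^{-1}, \eta^{-1}$), a direct computation using the triangle identities for $(P, Q)$ and the coherence axioms of Definition~\ref{defn:shadow} collapses this trace to $\id_{\sh M}$ --- equivalently, one transports along the isomorphism $\eta : U_R \cong P \odot Q$ and is left with the trace with respect to $U_R$ of the canonical symmetry $M \odot U_R \cong U_R \odot M$, which is the identity (this is \cite[Proposition~7.4]{Shadows_and_traces}, and follows immediately from Definition~\ref{defn:bicategorical_trace} using the triangle identities for $U_R$). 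The computation of $\alpha \circ \beta$ is the same with the roles of $P, \eta, \varepsilon$ and $Q, \varepsilon^{-1}, \eta^{-1}$ interchanged and with $Q \odot P \cong U_S$ in place of $P \odot Q \cong U_R$. Hence $\alpha$ is an isomorphism with inverse $\beta$, proving both halves of the statement.

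The main obstacle is the bookkeeping in the third step: lining up the twisting $1$-cells so that $f$ and $g$ satisfy the hypotheses of Theorem~\ref{thm:composite_of_dual_pairs_traces}, inserting the associators and unitors of $\mathscr{B}$ correctly so that the composite genuinely reduces to the canonical symmetry isomorphism, and then invoking the shadow coherence axioms when collapsing the trace with respect to $P \odot Q$ (or $U_R$). None of this is conceptually deep, but the commutative diagrams are large; as throughout \cite{Shadows_and_traces}, a string-diagram argument makes the cancellation of the inserted and deleted copies of $\eta$ essentially visible, and is the form in which I would ultimately present it.
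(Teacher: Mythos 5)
Your witnessing map is exactly the paper's: the trace with respect to $P$ of the isomorphism $M \odot P \cong P \odot Q \odot M \odot P$ obtained from $\eta$, so the ``moreover'' clause is handled identically. Where you diverge is in proving invertibility. The paper's proof is a one-liner whose implicit justification is that, for a Morita equivalence, $\eta$ and $\varepsilon$ are themselves isomorphisms, so every constituent of the composite defining the trace --- $\sh{1 \odot \eta}$, $\sh{f \odot 1}$, $\theta$, and $\sh{\varepsilon \odot 1}$ --- is invertible, hence so is the trace. You instead build an explicit inverse $\beta = \tr_Q(1 \odot \eta^{-1})$ and check both composites are identities via Theorem~\ref{thm:composite_of_dual_pairs_traces}, the reduction of traces with respect to unit 1-cells, and the triangle identities. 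This is correct and buys something the paper's argument does not make explicit: the inverse is itself exhibited as a bicategorical trace (with respect to $Q$), in the spirit of the proposition. The cost is bookkeeping, and two points you gesture at deserve explicit citation or proof: (1) ``transporting along $\eta$'' uses independence of the bicategorical trace from the choice of coevaluation and evaluation, so that the trace with respect to $P \odot Q$ may be computed with the structure transported from the unit rather than the composite structure of Proposition~\ref{prop:composite_of_dual_pairs} (this is standard, and is the trace analogue of Lemma~\ref{lem:cotrace_well_defined}); and (2) for $\alpha \circ \beta$ the cancellation is not a literal ``insert then delete the same copy'' but an interleaved zigzag that collapses only via the triangle identities, which you do invoke. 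Neither is a substantive gap --- but note that the paper's route makes the whole verification unnecessary.
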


\begin{proof}
The trace with respect to $P$ of $\eta \odot 1^2 : M \odot P \xrightarrow{\cong} P \odot Q \odot M \odot P$ is an isomorphism $\sh{M} \xrightarrow{\cong} \sh{Q \odot M \odot P}$.
\end{proof}

In the case that $M = U_R$, we obtain the following.

\begin{cor}
If $R$ and $S$ are Morita equivalent 0-cells in a bicategory with a shadow $\sh{-}$, then $\sh{U_R} \cong \sh{U_S}$.
\end{cor}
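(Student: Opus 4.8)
The plan is to obtain the corollary as the special case $M = U_R$ of the preceding proposition, after simplifying the 1-cell $Q \odot U_R \odot P$. First I would apply the proposition with $M = U_R$; this immediately gives an isomorphism $\sh{U_R} \xrightarrow{\cong} \sh{Q \odot U_R \odot P}$, realized concretely as the trace with respect to $P$ of the canonical 2-cell $U_R \odot P \xrightarrow{\cong} P \odot Q \odot U_R \odot P$ built from $\eta$ and the unitors.

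Next I would identify $\sh{Q \odot U_R \odot P}$ with $\sh{U_S}$. The unitors give an isomorphism of 1-cells $Q \odot U_R \odot P \xrightarrow{\cong} Q \odot P$, and by hypothesis the Morita evaluation $\varepsilon \colon Q \odot P \xrightarrow{\cong} U_S$ from Definition~\ref{defn:Morita_equiv} is an isomorphism as well. Applying the functor $\sh{-} \colon \mathscr{B}(S, S) \to \mathbf{T}$ to the composite isomorphism $Q \odot U_R \odot P \cong Q \odot P \xrightarrow{\varepsilon} U_S$ yields $\sh{Q \odot U_R \odot P} \xrightarrow{\cong} \sh{U_S}$, and composing with the isomorphism of the first step gives the desired $\sh{U_R} \cong \sh{U_S}$.

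No serious obstacle arises: the argument is formal, using only the proposition together with functoriality of $\sh{-}$ and the invertibility of $\varepsilon$ and the unitors. The one point worth spelling out is that the isomorphism so produced really is a bicategorical trace, as the proposition advertises --- under the identification above, the trace of $U_R \odot P \xrightarrow{\cong} P \odot Q \odot U_R \odot P$ unwinds to the defining composite of the Euler characteristic $\chi(P)$ of the right dualizable 1-cell $P$ (with right dual $Q$). One could also bypass the proposition entirely and take $\chi(P) \colon \sh{U_R} \to \sh{U_S}$ as the isomorphism from the outset, with candidate inverse $\chi(Q) \colon \sh{U_S} \to \sh{U_R}$ (here $Q$ is right dualizable with right dual $P$ via $\eta^{-1}, \varepsilon^{-1}$): Corollary~\ref{cor:composite_of_dual_pairs_Euler_characteristics} gives $\chi(Q) \circ \chi(P) = \chi(P \odot Q)$, and since $P \odot Q \cong U_R$ via $\eta$ while the shadow unit axiom forces $\chi(U_R) = \id_{\sh{U_R}}$, this composite is the identity; the other composite is handled symmetrically using $\varepsilon \colon Q \odot P \cong U_S$.
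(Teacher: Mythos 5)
Your proof is correct and is essentially the paper's own argument: the corollary is the proposition specialized to $M = U_R$, combined with the isomorphism $Q \odot U_R \odot P \cong Q \odot P \xrightarrow{\varepsilon} U_S$ and functoriality of $\sh{-}$. Your side remark that the resulting map is $\chi(P)$, with inverse $\chi(Q)$ via Corollary~\ref{cor:composite_of_dual_pairs_Euler_characteristics}, is a fine alternative as well.
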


Coshadows are also Morita invariant:

\begin{prop}
\label{prop:cotrace_Morita_invariance}
Let $(P, Q)$ be a Morita equivalence between 0-cells $R$ and $S$ in a closed bicategory $\mathscr{B}$ with a coshadow $\lsh{-}$. Then $\lsh{M} \cong \lsh{Q \odot M \odot P}$ for any 1-cell $M \in \mathscr{B}(R, R)$, and moreover, there is a bicategorical cotrace witnessing this equivalence.
\end{prop}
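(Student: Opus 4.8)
The plan is to imitate the proof of the shadow version, with the trace of an isomorphism 2-cell replaced by a cotrace of one; the one adjustment is that the cotrace should be taken with respect to $Q$ rather than $P$, which a bookkeeping check on source and target 0-cells forces (a cotrace of a 2-cell built from the dualizable 1-cell $Q \in \mathscr{B}(S,R)$ runs between the coshadows of an endo-1-cell at $R$ and an endo-1-cell at $S$, which is exactly the shape we want). Since $(P,Q)$ is a Morita equivalence, $(Q,P)$ is a dual pair with coevaluation $\varepsilon^{-1} : U_S \to Q \odot P$ and evaluation $\eta^{-1} : P \odot Q \to U_R$; in particular $Q$ is right dualizable with right dual $P$, and both of these structure maps are isomorphisms. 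By Corollary~\ref{cor:hom_from_dualizable} there are isomorphisms of 1-cells $Q \triangleright M \cong M \odot P$ and $(Q \odot M \odot P) \triangleleft Q \cong P \odot Q \odot M \odot P \cong M \odot P$ (the last step using $\eta^{-1}$ and a unit isomorphism). I would let $f : Q \triangleright M \xrightarrow{\cong} (Q \odot M \odot P) \triangleleft Q$ be the resulting composite isomorphism --- this plays the role of the map $\eta \odot 1^2$ in the shadow proof --- and claim that $\cotr_Q(f) : \lsh{M} \to \lsh{Q \odot M \odot P}$ is the desired equivalence, realized as a bicategorical cotrace.

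To show that $\cotr_Q(f)$ is an isomorphism, I would simply inspect the composite defining it in Definition~\ref{defn:cotrace} (with dualizable 1-cell $Q$ and dual $P$). All of the structure maps occurring there --- $\overline{r}$, $t$, $t^{-1}$, $\theta$, $\overline{l}^{-1}$ --- are isomorphisms; $f_*$ is an isomorphism because $f$ is; and $\varepsilon^*$ and $\eta^*$ are isomorphisms because, in the Morita setting, the evaluation $\eta^{-1}$ and coevaluation $\varepsilon^{-1}$ of the dual pair $(Q,P)$ are invertible. Since $\lsh{-}$ is a functor, applying it preserves these properties, so $\cotr_Q(f)$ is a composite of isomorphisms in $\mathbf{T}$ and hence an isomorphism. (Incidentally, the same reasoning is what makes the shadow statement true: every factor of the bicategorical trace of $\eta \odot 1^2$ is an isomorphism.)

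I do not expect a real obstacle: unlike most of the results in Sections~\ref{sec:cotrace_basic_properties} and \ref{sec:functoriality}, this one needs no large commuting diagram, only the elementary observation that a cotrace of an isomorphism with respect to a dualizable 1-cell whose coevaluation and evaluation are themselves invertible is again an isomorphism. The only delicate point is notational --- keeping the Morita structure maps $\eta, \varepsilon$ distinct from the generic coevaluation/evaluation symbols in Definition~\ref{defn:cotrace}. If one wanted an explicit inverse rather than mere invertibility, I would instead take the cotrace with respect to $P$ of the analogous isomorphism $P \triangleright (Q \odot M \odot P) \to M \triangleleft P$ and use Propositions~\ref{prop:cotrace_composite_of_dual_pairs} and \ref{prop:cotrace_with_unit} (together with $P \odot Q \cong U_R$ and $Q \odot P \cong U_S$) to identify the two composites with identity maps; this route is more work, but it parallels an alternative proof of the shadow statement.
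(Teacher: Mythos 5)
Your proposal is correct and follows essentially the same route as the paper: you take the cotrace, with respect to the dual pair $(Q,P)$ (coevaluation $\varepsilon^{-1}$, evaluation $\eta^{-1}$), of the composite isomorphism $Q \triangleright M \cong M \odot P \xrightarrow{\eta \odot 1^2} P \odot Q \odot M \odot P \cong (Q \odot M \odot P) \triangleleft Q$ built from Corollary~\ref{cor:hom_from_dualizable}, which is exactly the map the paper uses. Your observation that every factor in Definition~\ref{defn:cotrace} is invertible in the Morita setting just spells out the paper's one-line justification that the cotrace of an isomorphism using a Morita equivalence as the dual pair is an isomorphism.
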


\begin{proof}
The desired map is the composite
\begin{equation}
\label{eq:Morita_equiv_cotrace}
	Q \triangleright M \underset{\cong}{\to} M \odot P \underset{\cong}{\xrightarrow{\eta \odot 1^2}} P \odot Q \odot M \odot P \underset{\cong}{\to} (Q \odot M \odot P) \triangleleft Q,
\end{equation}
whose first and third maps come from Corollary~\ref{cor:hom_from_dualizable}. Since the cotrace of an isomorphism using a Morita equivalence as the dual pair is an isomorphism, the cotrace of (\ref{eq:Morita_equiv_cotrace}) is an isomorphism $\lsh{M} \xrightarrow{\cong} \lsh{Q \odot M \odot P}$.
\end{proof}

\begin{example}
The cotrace from Proposition~\ref{prop:cotrace_Morita_invariance}, using the categorical trace of \cite{GK2006} as the coshadow (Example~\ref{ex:categorical_trace}), is the map
\[
	\Hom_{\mathscr{B}(R, R)}(U_R, M) \to \Hom_{\mathscr{B}(S, S)}(U_S, Q \odot M \odot P)
\]
taking $\varphi : U_R \to M$ to
\begin{equation}
\label{eq:GK_cotrace}
	U_S \underset{\cong}{\xrightarrow{\varepsilon^{-1}}} Q \odot P \cong Q \odot U_R \odot P \xrightarrow{1 \odot \varphi \odot 1} Q \odot M \odot P.
\end{equation}
This is the isomorphism of \cite[Proposition~3.8(a)]{GK2006}. The reader may object that (\ref{eq:GK_cotrace}) clearly depends on the choice of $\varepsilon$, despite the claim that cotraces are independent of such choices (Lemma~\ref{lem:cotrace_well_defined}). This dependence comes not from the dual pair but from the map (\ref{eq:Morita_equiv_cotrace}) whose cotrace we have taken.
\end{example}

\FloatBarrier

\bibliographystyle{amsalpha}
\bibliography{bibliography.bib}

\providecommand{\bysame}{\leavevmode\hbox to3em{\hrulefill}\thinspace}
\providecommand{\MR}{\relax\ifhmode\unskip\space\fi MR }
\providecommand{\MRhref}[2]{%
  \href{http://www.ams.org/mathscinet-getitem?mr=#1}{#2}
}
\providecommand{\href}[2]{#2}
\begin{thebibliography}{LMSM86}

\bibitem[BS11]{BaezStay}
J.~Baez and M.~Stay, \emph{Physics, topology, logic and computation: a
  {R}osetta {S}tone}, New structures for physics, Lecture Notes in Phys., vol.
  813, Springer, Heidelberg, 2011, pp.~95--172. \MR{2767046}

\bibitem[CP19]{CP_THH}
Jonathan~A. Campbell and Kate Ponto, \emph{Topological {H}ochschild homology
  and higher characteristics}, Algebr. Geom. Topol. \textbf{19} (2019), no.~2,
  965--1017. \MR{3924181}

\bibitem[DP80]{DoldPuppe1978}
Albrecht Dold and Dieter Puppe, \emph{Duality, trace, and transfer},
  Proceedings of the {I}nternational {C}onference on {G}eometric {T}opology
  ({W}arsaw, 1978), PWN, Warsaw, 1980, pp.~81--102. \MR{656721}

\bibitem[GK08]{GK2006}
Nora Ganter and Mikhail Kapranov, \emph{Representation and character theory in
  2-categories}, Adv. Math. \textbf{217} (2008), no.~5, 2268--2300.
  \MR{2388094}

\bibitem[Har66]{Hartshorne1966}
Robin Hartshorne, \emph{Residues and duality}, Lecture Notes in Mathematics,
  No. 20, Springer-Verlag, Berlin-New York, 1966, Lecture notes of a seminar on
  the work of A. Grothendieck, given at Harvard 1963/64, With an appendix by P.
  Deligne. \MR{0222093}

\bibitem[Hat65]{Hattori1965}
Akira Hattori, \emph{Rank element of a projective module}, Nagoya Math. J.
  \textbf{25} (1965), 113--120. \MR{175950}

\bibitem[JS91]{JoyalStreet_geotencal}
Andr\'{e} Joyal and Ross Street, \emph{The geometry of tensor calculus. {I}},
  Adv. Math. \textbf{88} (1991), no.~1, 55--112. \MR{1113284}

\bibitem[JSV96]{JoyalStreetVerity_tracedmoncat}
Andr\'{e} Joyal, Ross Street, and Dominic Verity, \emph{Traced monoidal
  categories}, Math. Proc. Cambridge Philos. Soc. \textbf{119} (1996), no.~3,
  447--468. \MR{1357057}

\bibitem[Lei98]{Leinster_bicategories}
Tom Leinster, \emph{Basic bicategories}, 1998.

\bibitem[Lip87]{Lipman1987}
Joseph Lipman, \emph{Residues and traces of differential forms via {H}ochschild
  homology}, Contemporary Mathematics, vol.~61, American Mathematical Society,
  Providence, RI, 1987. \MR{868864}

\bibitem[LMSM86]{LMS}
L.~G. Lewis, Jr., J.~P. May, M.~Steinberger, and J.~E. McClure,
  \emph{Equivariant stable homotopy theory}, Lecture Notes in Mathematics, vol.
  1213, Springer-Verlag, Berlin, 1986, With contributions by J. E. McClure.
  \MR{866482}

\bibitem[MS06]{MaySigurdsson}
J.~P. May and J.~Sigurdsson, \emph{Parametrized homotopy theory}, Mathematical
  Surveys and Monographs, vol. 132, American Mathematical Society, Providence,
  RI, 2006. \MR{2271789}

\bibitem[Pen71]{Penrose_negdimten}
Roger Penrose, \emph{Applications of negative dimensional tensors},
  Combinatorial {M}athematics and its {A}pplications ({P}roc. {C}onf.,
  {O}xford, 1969), Academic Press, London, 1971, pp.~221--244. \MR{0281657}

\bibitem[Pon10]{Ponto_thesis}
Kate Ponto, \emph{Fixed point theory and trace for bicategories},
  Ast\'{e}risque (2010), no.~333, xii+102. \MR{2741967}

\bibitem[Pon11]{Ponto2011relative}
\bysame, \emph{Relative fixed point theory}, Algebr. Geom. Topol. \textbf{11}
  (2011), no.~2, 839--886. \MR{2782545}

\bibitem[Pon15]{Ponto2015equivariant}
\bysame, \emph{Equivariant fixed-point theory}, 2015, pp.~161--190.
  \MR{3426379}

\bibitem[PR84]{Penrose_spinors}
Roger Penrose and Wolfgang Rindler, \emph{Spinors and space-time. {V}ol. 1},
  Cambridge Monographs on Mathematical Physics, Cambridge University Press,
  Cambridge, 1984, Two-spinor calculus and relativistic fields. \MR{776784}

\bibitem[PS13]{Shadows_and_traces}
Kate Ponto and Michael Shulman, \emph{Shadows and traces in bicategories}, J.
  Homotopy Relat. Struct. \textbf{8} (2013), no.~2, 151--200. \MR{3095324}

\bibitem[PS14]{PS_sym_mon_traces}
\bysame, \emph{Traces in symmetric monoidal categories}, Expo. Math.
  \textbf{32} (2014), no.~3, 248--273. \MR{3253568}

\bibitem[Sel11]{Selinger_graphical}
P.~Selinger, \emph{A survey of graphical languages for monoidal categories},
  New structures for physics, Lecture Notes in Phys., vol. 813, Springer,
  Heidelberg, 2011, pp.~289--355. \MR{2767048}

\bibitem[Sta65]{Stallings1965}
John Stallings, \emph{Centerless groups---an algebraic formulation of
  {G}ottlieb's theorem}, Topology \textbf{4} (1965), 129--134. \MR{202807}

\end{thebibliography}

\end{document}